\theoremstyle{plain}
\newtheorem{theorem}{Theorem}[section]
\newtheorem{lemma}[theorem]{Lemma}
\newtheorem{proposition}[theorem]{Proposition}
\newtheorem{corollary}[theorem]{Corollary}
\theoremstyle{definition}
\newtheorem{definition}[theorem]{Definition}
\theoremstyle{remark}
\newtheorem{remark}[theorem]{Remark}
\newtheorem{problem}[theorem]{Problem}
\DeclareMathOperator{\Iso}{{\rm Iso}}
\DeclareMathOperator{\supp}{{\rm supp}}
\DeclareMathOperator{\CB}{{\rm CB}}
\DeclareMathOperator{\cl}{{\rm cl}}
\begin{document}
\title{On iso-dense and scattered spaces in $\mathbf{ZF}$}
\author{Kyriakos Keremedis, Eleftherios Tachtsis and Eliza Wajch\\
Department of Mathematics, University of the Aegean\\
Karlovassi, Samos 83200, Greece\\
kker@aegean.gr\\
Department of Statistics and Actuarial-Financial Mathematics,\\
 University of the Aegean, Karlovassi 83200, Samos, Greece\\
 ltah@aegean.gr\\
Institute of Mathematics\\
Faculty of Exact and Natural Sciences \\
Siedlce University of Natural Sciences and Humanities\\
ul. 3 Maja 54, 08-110 Siedlce, Poland\\
eliza.wajch@wp.pl}
\maketitle
\begin{abstract}
 A topological space is iso-dense if it has a dense set of isolated points. A topological space is scattered if each of its non-empty subspaces has an isolated point. In $\mathbf{ZF}$, in the absence of the axiom of choice, basic properties of iso-dense spaces are investigated. A new permutation model is constructed in which a discrete weakly Dedekind-finite space can have the Cantor set as a remainder. A metrization theorem for a class of quasi-metric spaces is deduced.  The statement ``every compact scattered metrizable space is separable'' and several other statements about metric iso-dense spaces are shown to be equivalent to the countable axiom of choice for families of finite sets. Results concerning the problem of whether it is provable in $\mathbf{ZF}$ that every non-discrete compact metrizable space contains an infinite compact scattered subspace are also included.
  \medskip

\noindent\textit{Mathematics Subject Classification (2010)}: 03E25, 03E35,  54E35, 54G12, 54D35\newline 
\textit{Keywords}: Weak forms of the Axiom of Choice, scattered space, iso-dense space, (quasi-)metric space
\end{abstract}

\section{Preliminaries}
\label{s1}
\subsection{Set-theoretic framework and preliminary definitions}
\label{s1.1}
In this article, the intended context for reasoning and statements of theorems is $\mathbf{ZF}$ without any form of the axiom of choice $\mathbf{AC}$. However, we also refer to permutation models of $\mathbf{ZFA}$ (cf. \cite{Je} and \cite{hr}). In this article, we are concerned mainly with iso-dense and scattered spaces in $\mathbf{ZF}$, defined as follows:

\begin{definition}
\label{s1d1} 
A topological space $\mathbf{X}$ is called:
\begin{enumerate}
\item[(i)] \emph{iso-dense} if the set $\Iso(X)$ of all isolated points of $\mathbf{X}$ is dense in $\mathbf{X}$;
\item[(ii)] \emph{scattered} or \emph{dispersed} if, for every non-empty subspace $\mathbf{Y}$ of $\mathbf{X}$, $\emptyset\neq\Iso(Y)$.
\end{enumerate}
\end{definition}

Before we pass to the main body of the article, let us establish notation and recall some known definitions in this subsection, make a list of weaker forms of $\mathbf{AC}$ in Subsection \ref{s1.2}, and recall several known results for future references in Subsection \ref{s1.3}. The content of the article is described in brief in Subsection \ref{s1.4}. Our main new results are included in Sections \ref{s2}-\ref{s4}.

We denote by $ON$ the class of all (von Neumann) ordinal numbers. The first infinite ordinal number is denoted by $\omega$. Then $\mathbb{N}=\omega\setminus\{0\}$. If $X$ is a set, the power set of  $X$ is denoted by $\mathcal{P}(X)$, and the set of all finite subsets of $X$ is denoted by $[X]^{<\omega}$. 

 A \emph{quasi-metric} on a set $X$ is a function $d: X\times X\to [0, +\infty)$ such that, for all $x,y,z\in X$, $d(x,y)\leq d(x,z)+d(z,y)$ and $d(x,y)=0$ if and only if $x=y$ (cf. \cite{fl}, \cite{Kel}, \cite{kunzi}, \cite{wils}). If a quasi-metric $d$ on $X$ is such that $d(x,y)=d(y,x)$ for all $x,y\in X$, then $d$ is a \emph{metric}. A \emph{(quasi-)metric space} is an ordered pair $\langle X, d\rangle$ where $X$ is a set and $d$ is a (quasi-)metric on $X$.

Let $d$ be a quasi-metric on $X$. \textit{The conjugate} of $d$ is the quasi-metric $d^{-1}$ defined by: 
$$d^{-1}(x, y)=d(y, x) \text{ for } x, y\in X.$$
The \emph{metric} $d^{\star}$ \emph{associated with} $d$ is defined by: 
$$d^{\star}(x,y)=\max\{d(x,y), d(y,x)\} \text{ for } x,y\in X.$$  

Clearly, $d$ is a metric if and only if $d=d^{-1}=d^{\star}$.

The $d$-\textit{ball with centre $x\in X$ and radius} $r\in(0, +\infty)$ is the set 
$$B_{d}(x, r)=\{ y\in X: d(x, y)<r\}.$$
 The collection 
$$\tau(d)=\left\{ V\subseteq X: (\forall x\in V)(\exists n\in\omega) B_{d}\left(x, \frac{1}{2^n}\right)\subseteq V\right\}$$
is the \textit{topology in $X$ induced by $d$}.   For a set $A\subseteq X$, let $\delta_d(A)=0$ if $A=\emptyset$, and let $\delta_d(A)=\sup\{d(x,y): x,y\in A\}$ if $A\neq \emptyset$. Then $\delta_d(A)$ is the \emph{diameter} of $A$ in $\langle X, d\rangle$.

 A quasi-metric $d$ on $X$ is called \emph{strong} if $\tau(d)\subseteq\tau(d^{-1})$.
 
In the sequel, topological or (quasi-)metric spaces (called spaces in abbreviation) are denoted by boldface letters, and the underlying sets of the spaces are denoted by lightface letters.

For a topological space $\mathbf{X}=\langle X, \tau\rangle$ and for $Y\subseteq X$, let $\tau|_Y=\{V\cap Y: V\in\tau\}$ and let $\mathbf{Y}=\langle Y, \tau|_Y\rangle$. Then $\mathbf{Y}$ is the topological subspace of $\mathbf{X}$ such that $Y$ is the underlying set of $\mathbf{Y}$. If this is not misleading, we may denote the topological subspace $\mathbf{Y}$ of $\mathbf{X}$ by $Y$.

 A topological space $\mathbf{X}=\langle X, \tau\rangle$ is called \emph{(quasi-)metrizable} if there exists a (quasi-)metric $d$ on $X$ such that $\tau=\tau(d)$.

For a (quasi-)metric space $\mathbf{X}=\langle X, d\rangle$ and for $Y\subseteq X$, let $d_Y=d\upharpoonright Y\times Y$ and $\mathbf{Y}=\langle Y, d_Y\rangle$. Then $\mathbf{Y}$ is the (quasi-)metric subspace of $\mathbf{X}$ such that $Y$ is the underlying set of $\mathbf{Y}$. Given a (quasi-)metric space $\mathbf{X}=\langle X, d\rangle$, if not stated otherwise, we also denote by $\mathbf{X}$ the topological space $\langle X, \tau(d)\rangle$. For every $n\in\mathbb{N}$, $\mathbb{R}^n$ denotes also $\langle \mathbb{R}^{n}, d_e\rangle$ and $\langle \mathbb{R}^n, \tau(d_e)\rangle$  where $d_e$ is the Euclidean metric on $\mathbb{R}^n$.    

For a topological space $\mathbf{X}=\langle X, \tau\rangle$ and a set $A\subseteq X$, we denote by $\cl_{\mathbf{X}}(A)$ or by $\cl_{\tau}(A)$  the closure of $A$ in $\mathbf{X}$. 

For any topological space $\mathbf{X}=\langle X, \tau\rangle$, let 
$$\Iso_{\tau}(X)=\{x\in X: x\text{ is an isolated point of } \mathbf{X}\}.$$
\noindent If this is not misleading, as in Definition \ref{s1d1}, we use $\Iso(X)$ to denote $\Iso_{\tau}(X)$. 
By transitive recursion, we define a decreasing sequence $(X^{({\alpha})})_{\alpha\in ON}$ of closed subsets of $\mathbf{X}$ as follows:
$$X^{(0)}= X,$$
$$X^{({\alpha+1})}=X^{({\alpha})}\setminus\Iso(X^{({\alpha})}),$$
$$X^{({\alpha})}=\bigcap\limits_{\gamma\in\alpha}X^{({\gamma})}\text{ if $\alpha$ is a limit ordinal}.$$
\noindent For $\alpha\in ON$, the set $X^{({\alpha})}$ is called the \emph{$\alpha$-th Cantor-Bendixson derivative} of $\mathbf{X}$. The least ordinal $\alpha$ such that $X^{({\alpha+1})}=X^{({\alpha})}$ is denoted by $|\mathbf{X}|_{\CB}$ and is called the \emph{Cantor-Bendixson rank} of $\mathbf{X}$.

\begin{definition}
\label{s1d2} A set $X$ is called:
\begin{enumerate} 
\item[(i)]\emph{Dedekind-finite} if there is no injection $f:\omega\to X$; \emph{Dedekind-infinite} if $X$ is not Dedekind-finite;
\item[(ii)] \emph{quasi Dedekind-finite} if $[X]^{<\omega}$ is Dedekind-finite; \emph{quasi Dedekind-infi\-nite} if $X$ is not quasi Dedekind-finite;
\item[(iii)] \emph{ weakly Dedekind-finite} if $\mathcal{P}(X)$ is Dedekind-finite; \emph{weakly Dedekind-infinite} if $\mathcal{P}(X)$ is Dedekind-infinite;
\item[(iv)] a \emph{cuf set} if $X$ is a countable union of finite sets;
\item[(v)] \emph{amorphous} if $X$ is infinite and, for every infinite subset $Y$ of $X$, the set $X\setminus Y$ is finite. 
\end{enumerate}
\end{definition}

\begin{definition}
\label{s1d3}
\begin{enumerate}
\item[(i)] A space $\mathbf{X}$ is called a \emph{cuf space} if its underlying set $X$ is a cuf set.
\item[(ii)] A base $\mathcal{B}$ of a space $\mathbf{X}$ is called a \emph{cuf base} if $\mathcal{B}$ is a cuf set.
\end{enumerate}
\end{definition}

\begin{definition}
\label{s1d4}
A space $\mathbf{X}$ is called:
\begin{enumerate}
\item[(i)] \emph{first-countable} if every point of $\mathbf{X}$ has a countable base of open neighborhoods;
\item[(ii)] \emph{second-countable} if $\mathbf{X}$ has a countable base;
\item[(iii)] \emph{compact} if every open cover of $\mathbf{X}$ has a finite subcover;
\item[(iv)] \emph{locally compact} if every point of $\mathbf{X}$ has a compact neighborhood;
\item[(v)] \emph{limit point compact} if every infinite subset of $\mathbf{X}$ has an accumulation point in $\mathbf{X}$ (cf. \cite{k01} and \cite{k}).
\end{enumerate}
\end{definition}

\begin{definition}
\label{s1d5}
Let $\mathbf{X}=\langle X, d\rangle$ be a (quasi-)metric space.
\begin{enumerate}
\item[(i)] Given a real number $\varepsilon>0$, a subset $D$ of $X$ is called $\varepsilon$-\emph{dense} or an $\varepsilon$-\emph{net} in $\mathbf{X}$ if, for every $x\in X$, $B_d(x, \varepsilon)\cap D\neq\emptyset$ (equivalently, if $X=\bigcup\limits_{x\in D}B_{d^{-1}}(x, \varepsilon)$). 
\item[(ii)]  $\mathbf{X}$ is called \emph{precompact} (respectively, \emph{totally bounded}) if, for every real number $\varepsilon>0$, there exists a finite $\varepsilon$-net in $\langle X, d^{-1}\rangle$ (respectively, in $\langle X, d^{\star}\rangle$).

\item[(iii)]  $d$ is called  \emph{precompact} (respectively, \emph{totally bounded}) if $\mathbf{X}$ is precompact (respectively, \emph{totally bounded}).
\end{enumerate}
\end{definition}

\begin{remark}
\label{s1r6}
Definition \ref{s1d5} (ii) is based on the notions of precompact and totally bounded quasi-uniformities defined, e.g., in \cite{fl} and \cite{kunzi}.  Namely, given a quasi-metric  $d$ on a set $X$,  the collection 
$$\mathcal{U}(d)=\left\{U\subseteq X\times X: \exists_{n\in\omega} \left\{\langle x,y\rangle\in X\times X: d(x,y)<\frac{1}{2^n}\right\}\subseteq U\right\}$$
is a quasi-uniformity on $X$ called  the \emph{quasi-uniformity induced by} $d$ (cf. \cite[p. 504]{kunzi}). The quasi-uniformity $\mathcal{U}(d)$ is precompact (resp.,  totally bounded) in the sense of \cite{fl} and \cite{kunzi} if and only if $d$ is precompact (resp., totally bounded) in the sense of Definition \ref{s1d5}. Clearly $d$ is totally bounded if and only if, for every $n\in\omega$, there exists a finite set $D\subseteq X$ such that $X=\bigcup\limits_{x\in D}\left(B_d(x, \frac{1}{2^n})\cap B_{d^{-1}}(x, \frac{1}{2^n})\right)$. The notions of a totally bounded and precompact metric are equivalent.
\end{remark}

We recall that a (Hausdorff) \emph{compactification} of a space $\mathbf{X}=\langle X, \tau\rangle$ is an ordered pair $\langle\mathbf{Y},\gamma\rangle$ where $\mathbf{Y}$ is a  (Hausdorff) compact space and $\gamma: \mathbf{X}\to\mathbf{Y}$ is a homeomorphic embedding such that $\gamma(X)$ is dense in $\mathbf{Y}$. A compactification  $\langle \mathbf{Y}, \gamma\rangle$ of $\mathbf{X}$ and the space $\mathbf{Y}$ are usually denoted by $\gamma\mathbf{X}$. The underlying set of $\gamma\mathbf{X}$ is denoted by $\gamma X$. The subspace $\gamma X\setminus X$ of $\gamma \mathbf{X}$ is called the \emph{remainder} of $\gamma\mathbf{X}$. A space $\mathbf{K}$ is said to be a remainder of $\mathbf{X}$ if there exists a Hausdorff compactification $\gamma\mathbf{X}$ of $\mathbf{X}$ such that $\mathbf{K}$ is homeomorphic to $\gamma X \setminus X$. For compactifications  $\alpha\mathbf{X}$ and $\gamma\mathbf{X}$ of $\mathbf{X}$, we write $\gamma\mathbf{X}\leq\alpha\mathbf{X}$ if there exists a continuous mapping $f:\alpha\mathbf{X}\to\gamma\mathbf{X}$ such that $f\circ\alpha=\gamma$. If $\alpha\mathbf{X}$ and $\gamma\mathbf{X}$ are Hausdorff compactifications of $\mathbf{X}$ such that $\alpha\mathbf{X}\leq\gamma\mathbf{X}$ and $\gamma\mathbf{X}\leq\alpha\mathbf{X}$, then we write $\alpha\mathbf{X}\approx\gamma\mathbf{X}$ and say that the compactifications $\alpha\mathbf{X}$ and $\gamma\mathbf{X}$ are \emph{equivalent}. If $n\in\mathbb{N}$, then a compactification $\gamma\mathbf{X}$ of $\mathbf{X}$ is said to be an $n$-point compactification of $\mathbf{X}$ if $\gamma X\setminus X$ is an $n$-element set. 

\begin{definition}
\label{s1d7}
Let $\mathbf{X}=\langle X, \tau\rangle$ is a non-compact locally compact Hausdorff space and let $\mathcal{K}(X)$ be the collection of all compact subsets of $\mathbf{X}$. For an element $\infty\notin X$, we define $X(\infty)=X\cup\{\infty\}$, 
$$\tau(\infty)=\tau\cup\{X(\infty)\setminus K: K\in\mathcal{K}(X)\}$$
\noindent and $\mathbf{X}(\infty)=\langle X(\infty), \tau(\infty)\rangle$. Then $\mathbf{X}(\infty)$ is called the \emph{Alexandroff compactification} of $\mathbf{X}$.
\end{definition}

For every non-compact locally compact Hausdorff space $\mathbf{X}$, $\mathbf{X}(\infty)$ is the unique (up to $\approx$) one-point Hausdorff compactification of $\mathbf{X}$. Therefore, every one-point Hausdorff compactification of $\mathbf{X}$ is called the Alexandroff compactification of $\mathbf{X}$. Chandler's book \cite{ch} is a good introduction to Hausdorff compactifications in $\mathbf{ZFC}$.  Basic facts about Hausdorff compactifications in $\mathbf{ZF}$ can be found in \cite{kw0}. If $\mathbf{X}$ is a space which has the \v Cech-Stone compactification, then, as usual, $\beta \mathbf{X}$ stands for the \v Cech-Stone compactification of $\mathbf{X}$.

Given a collection  $\{X_j: j\in J\}$ of sets, for every $i\in J$, we denote by $\pi_i$ the projection $\pi_i:\prod\limits_{j\in J}X_j\to X_i$ defined by $\pi_i(x)=x(i)$ for each $x\in\prod\limits_{j\in J}X_j$. If $\tau_j$ is a topology in $X_j$, then $\mathbf{X}=\prod\limits_{j\in J}\mathbf{X}_j$ denotes the Tychonoff product of the topological spaces $\mathbf{X}_j=\langle X_j, \tau_j\rangle$ with $j\in J$. If $\mathbf{X}_j=\mathbf{X}$ for every $j\in J$, then $\mathbf{X}^{J}=\prod\limits_{j\in J}\mathbf{X}_j$. As in \cite{En}, for an infinite set $J$ and the unit interval $[0,1]$ of $\mathbb{R}$, the cube $[0,1]^J$ is called the \emph{Tychonoff cube}. If $J$ is denumerable, then the Tychonoff cube $[0,1]^J$ is called the \emph{Hilbert cube}. We denote by $\mathbf{2}$ the discrete space with the underlying set $2=\{0, 1\}$. If $J$ is an infinite set, the space $\mathbf{2}^J$ is called the \emph{Cantor cube}.

We recall that if $\prod\limits_{j\in J}X_j\neq\emptyset$, then it is said that the family $\{X_j: j\in J\}$ has a choice function, and every element of $\prod\limits_{j\in J}X_j$ is called a \emph{choice function} of the family $\{X_j: j\in J\}$. A \emph{multiple choice function} of $\{X_j: j\in J\}$ is every function $f\in\prod\limits_{j\in J}([X_j]^{<\omega}\setminus\{\emptyset\})$. If $J$ is infinite, a function $f$ is called \emph{partial} (\emph{multiple}) \emph{choice function} of $\{X_j: j\in J\}$ if there exists an infinite subset $I$ of $J$ such that $f$ is a (multiple) choice function of $\{X_j: j\in I\}$. Given a non-indexed family $\mathcal{A}$, we treat $\mathcal{A}$ as an indexed family $\mathcal{A}=\{x: x\in\mathcal{A}\}$ to speak about a  (partial) choice function and a (partial) multiple choice function of $\mathcal{A}$.

Let  $\{X_j: j\in J\}$ be a disjoint family of sets, that is, $X_i\cap X_j=\emptyset$ for each pair $i,j$ of distinct elements of $J$. If $\tau_j$ is a topology in $X_j$ for every $j\in J$, then $\bigoplus\limits_{j\in J}\mathbf{X}_j$ denotes the direct sum of the spaces $\mathbf{X}_j=\langle X_j, \tau_j\rangle$ with $j\in J$.

\begin{definition}
\label{s1d8}
(Cf. \cite{br}, \cite{lo} and \cite{kerta}.) 
\begin{enumerate}
\item[(i)] A space $\mathbf{X}$ is said to be \emph{Loeb} (respectively, \emph{weakly Loeb}) if the family of all non-empty closed subsets of $\mathbf{X}$ has a choice function (respectively, a multiple choice function).
\item[(ii)] If $\mathbf{X}$ is a (weakly) Loeb space, then every (multiple) choice function of the family of all non-empty closed subsets of $\mathbf{X}$ is called a (\emph{weak}) \emph{Loeb function} of $\mathbf{X}$.
\end{enumerate}
\end{definition}

Other topological notions used in this article but not defined here are standard. They can be found, for instance, in \cite{En} and \cite{w}. 

\subsection{The list of forms weaker than $\mathbf{AC}$}
\label{s1.2}
In this subsection, for readers' convenience, we define and denote the weaker forms of $\mathbf{AC}$ used directly in this paper. For the known forms given in \cite{hr}, we quote in their statements the form number under which they are recorded in \cite{hr}.

\begin{definition}
\label{s1d9}
\begin{enumerate}
\item $\mathbf{IQDI}$: Every infinite set is quasi Dedekind-infinite.
\item $\mathbf{IWDI}$ (\cite[Form 82]{hr}): Every infinite set is weakly Dedekind-infinite. 
\item $\mathbf{IDI}$ ( \cite[Form 9]{hr}): Every infinite set is Dedekind-infinite.
\item $\mathbf{CAC}$ (\cite[Form 8]{hr}): Every denumerable family of non-empty sets has a choice function.
\item $\mathbf{CAC}_{fin}$ ( \cite[Form 10]{hr}): Every denumerable family of non-empty finite sets has a choice function.
\item $\mathbf{WOAC}_{fin}$ (\cite[Form 122]{hr}): Every well-orderable non-empty family of non-empty finite sets has a choice function.
\item $\mathbf{MC}$ (the Axiom of Multiple Choice, \cite[Form 67]{hr}): Every non-empty family of non-empty sets has a multiple choice function.
\item $\mathbf{CMC}$ (the Countable Axiom of Multiple Choice, \cite[Form 126]{hr}): Every denumerable family of non-empty sets has a multiple choice function.
\item $\mathbf{WoAm}$ (\cite[Form 133]{hr}): Every infinite set is either well-orderable or has an amorphous subset. 
\item $\mathbf{DC}$ (the Principle of Dependent Choice, \cite[Form 43]{hr}): For every non-empty set $A$ and every binary relation $S$ on $A$ such that $(\forall x\in A)(\exists y\in A)(\langle x, y\rangle\in S)$, there exists $a\in A^{\omega}$ such that:
 $$(\forall n\in\omega)(\langle a(n), a(n+1)\rangle\in S).$$
\item $\mathbf{BPI}$ (the Boolean Prime Ideal Principle,  \cite[Form 14]{hr}): Every Boolean algebra has a prime ideal.
\item $\mathbf{NAS}$ ( \cite[Form 64]{hr}): There are no amorphous sets.
\item $\mathbf{M}(C, S)$: Every compact metrizable space is separable. (Cf. \cite{k02}, \cite{k01}, \cite{KT1}, \cite{KT2} and \cite{ktw0}.)
\item $\mathbf{IDFBI}$: For every infinite set $D$, the Cantor cube $\mathbf{2}^{\omega}$ is a remainder of the discrete space $\langle D, \mathcal{P}(D)\rangle$. (Cf. \cite{ktw1}.)
\item  $\mathbf{INSHC}$: Every infinite discrete space has a non-scattered Hausdorff compactification.
\end{enumerate} 
\end{definition}

The form $\mathbf{IDFBI}$ has been introduced and investigated in \cite{ktw1} recently. More comments about $\mathbf{IDFBI}$ are included in Remark \ref{s1r21}. New facts concerning $\mathbf{IDFBI}$ (among them, a solution of an open problem posed in \cite{ktw1}), are included in Section \ref{s2}. The form $\mathbf{INSHC}$ is new here. That $\mathbf{INSHC}$ is essentially weaker than $\mathbf{IDFBI}$ is shown in Section \ref{s2}.

\subsection{Some known results}
\label{s1.3}
In this subsection, we quote several known results that we refer to in the sequel. Some of the quoted results have been obtained recently, so they can be unknown to possible readers of this article. 

\begin{proposition}
\label{s1p10}
(Cf. \cite{kft}.) $(\mathbf{ZF})$  A topological space $\mathbf{X}$ is scattered if and only if there exists $\alpha\in ON$ such that $X^{({\alpha})}=\emptyset$. If $\mathbf{X}$ is scattered then 
$$|\mathbf{X}|_{\CB}=\min\{\alpha\in ON: X^{({\alpha})}=\emptyset\}.$$
Moreover, if $\mathbf{X}$ is a non-empty scattered compact space, then $|\mathbf{X}|_{\CB}$ is a successor ordinal. 
\end{proposition}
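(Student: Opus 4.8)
The plan is to prove the three assertions in order, working entirely in $\mathbf{ZF}$ and exploiting the fact that the Cantor–Bendixson derivatives $(X^{(\alpha)})_{\alpha\in ON}$ form a decreasing sequence of closed sets defined without any appeal to choice. The first assertion is the main equivalence: $\mathbf{X}$ is scattered if and only if $X^{(\alpha)}=\emptyset$ for some ordinal $\alpha$.

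For the forward direction, suppose $\mathbf{X}$ is scattered but $X^{(\alpha)}\neq\emptyset$ for every $\alpha\in ON$. Since the sequence is decreasing and the sets $X^{(\alpha)}$ are subsets of the fixed set $X$, the sequence must stabilize: there is a least ordinal $\alpha_0$ with $X^{(\alpha_0+1)}=X^{(\alpha_0)}$, and by our standing (choice-free) assumption this common value $Y:=X^{(\alpha_0)}$ is non-empty. But $X^{(\alpha_0+1)}=X^{(\alpha_0)}\setminus\Iso(X^{(\alpha_0)})$, so $X^{(\alpha_0+1)}=X^{(\alpha_0)}$ forces $\Iso(Y)=\emptyset$. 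Thus $Y$ is a non-empty subspace of $\mathbf{X}$ with no isolated point, contradicting scatteredness. Hence $X^{(\alpha)}=\emptyset$ for some $\alpha$. (I should note that the existence of the stabilizing ordinal is a purely set-theoretic fact: a decreasing $ON$-indexed sequence of subsets of a set cannot be strictly decreasing at every step, since otherwise the map $\alpha\mapsto X^{(\alpha)}\setminus X^{(\alpha+1)}$ would inject a proper class into $\mathcal{P}(X)$; this requires no choice.)

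For the reverse direction, suppose $X^{(\alpha)}=\emptyset$ for some $\alpha$, and let $Y$ be any non-empty subspace of $\mathbf{X}$; I want to show $\Iso(Y)\neq\emptyset$. The idea is to locate the least ordinal $\beta$ at which $Y$ first meets the complement of a derivative, i.e.\ the least $\beta$ with $Y\cap X^{(\beta)}=\emptyset$ (such $\beta$ exists and is nonzero since $Y\subseteq X=X^{(0)}$ while $Y\cap X^{(\alpha)}=\emptyset$). This $\beta$ is a successor, say $\beta=\gamma+1$: it cannot be a limit, because at a limit $\lambda$ one has $Y\cap X^{(\lambda)}=\bigcap_{\gamma<\lambda}(Y\cap X^{(\gamma)})$, and if each $Y\cap X^{(\gamma)}$ were non-empty while their intersection is empty we can derive a contradiction by picking any point of the nonempty $Y\cap X^{(\gamma_0)}$ for the largest relevant index — more carefully, minimality of $\beta$ gives $Y\cap X^{(\gamma)}\neq\emptyset$ for all $\gamma<\beta$, and at a limit this is incompatible with empty intersection only after a short argument, so it is cleanest to take any point $y\in Y\cap X^{(\gamma)}$ with $Y\cap X^{(\gamma+1)}=\emptyset$. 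Then $y\in X^{(\gamma)}\setminus X^{(\gamma+1)}=\Iso(X^{(\gamma)})$, so $y$ is isolated in the closed set $X^{(\gamma)}\supseteq Y\cap X^{(\gamma)}$; since $Y\cap X^{(\gamma)}$ is a subspace containing $y$ on which $Y\cap X^{(\gamma+1)}=\emptyset$, the point $y$ is isolated in $Y$ as well. Hence $\Iso(Y)\neq\emptyset$, so $\mathbf{X}$ is scattered. The formula $|\mathbf{X}|_{\CB}=\min\{\alpha:X^{(\alpha)}=\emptyset\}$ then follows immediately: by definition $|\mathbf{X}|_{\CB}$ is the least $\alpha$ with $X^{(\alpha+1)}=X^{(\alpha)}$, and for a scattered space the stabilized value is $\emptyset$ by the forward direction, so the two minima coincide.

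For the final moreover-clause, assume $\mathbf{X}$ is non-empty, scattered, and compact, and let $\alpha=|\mathbf{X}|_{\CB}$, so $X^{(\alpha)}=\emptyset$ while $X^{(\gamma)}\neq\emptyset$ for $\gamma<\alpha$. Since $X\neq\emptyset$, we have $\alpha>0$. The hard part is ruling out that $\alpha$ is a limit ordinal, and this is exactly where compactness enters. If $\alpha$ were a limit, then $\emptyset=X^{(\alpha)}=\bigcap_{\gamma<\alpha}X^{(\gamma)}$ would be an intersection of a decreasing (hence centered) family of non-empty closed subsets of the compact space $\mathbf{X}$; by compactness such an intersection is non-empty, a contradiction. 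I would phrase this via the finite-intersection characterization: any finite subfamily $\{X^{(\gamma_1)},\dots,X^{(\gamma_k)}\}$ has intersection equal to $X^{(\max_i\gamma_i)}\neq\emptyset$ (using monotonicity of the derivatives), so the whole family has non-empty intersection, contradicting $X^{(\alpha)}=\emptyset$. Therefore $\alpha$ is not a limit, and since $\alpha>0$ it must be a successor ordinal. This compactness step is the crux; everything else is bookkeeping with the derivative sequence and is choice-free, as the decreasing-sequence stabilization and the finite-intersection property of compact spaces both hold in $\mathbf{ZF}$.
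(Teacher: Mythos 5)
Your forward direction, your identification of $|\mathbf{X}|_{\CB}$ with $\min\{\alpha\in ON: X^{(\alpha)}=\emptyset\}$, and your compactness argument for the moreover-clause are all correct in $\mathbf{ZF}$ (note the paper itself gives no proof to compare against --- it quotes the proposition from \cite{kft}). The genuine gap is in the reverse direction of the equivalence. You take $\beta$ to be the least ordinal with $Y\cap X^{(\beta)}=\emptyset$ and then need two claims: (a) $\beta$ is a successor, and (b) any $y\in Y\cap X^{(\gamma)}$, where $\beta=\gamma+1$, is isolated in $Y$. Both are false. For (a), let $\mathbf{X}$ be the ordinal $\omega^{\omega}+1$ with the order topology and $Y=\omega^{\omega}$: then $Y\cap X^{(n)}\neq\emptyset$ for every $n\in\omega$ (it contains $\omega^{n}$), while $X^{(\omega)}=\{\omega^{\omega}\}$, so the least $\beta$ with $Y\cap X^{(\beta)}=\emptyset$ is the limit ordinal $\omega$; minimality of $\beta$ is perfectly compatible with the intersection of the non-empty traces $Y\cap X^{(\gamma)}$ being empty, there is no ``short argument'' to the contrary, and in this example no $\gamma$ with $Y\cap X^{(\gamma)}\neq\emptyset$ and $Y\cap X^{(\gamma+1)}=\emptyset$ exists at all. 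For (b), take $\mathbf{X}=Y=\{0\}\cup\{1/n: n\in\mathbb{N}\}\subseteq\mathbb{R}$: the least $\beta$ with $Y\cap X^{(\beta)}=\emptyset$ is $2=1+1$ and $Y\cap X^{(1)}=\{0\}$, so your procedure selects $y=0$ and declares it isolated in $Y$, yet $0$ is the unique accumulation point of $Y$. The underlying error is that isolation of $y$ in $X^{(\gamma)}$ passes down to $Y$ only when $Y\subseteq X^{(\gamma)}$, and your choice of $\beta$ does not guarantee this inclusion.

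The repair is to minimize a different ordinal: let $\beta$ be the least ordinal with $Y\not\subseteq X^{(\beta)}$. This exists because $Y\not\subseteq X^{(\alpha)}=\emptyset$; it is non-zero because $Y\subseteq X^{(0)}$; and it genuinely cannot be a limit, since at a limit $\lambda$ the set $X^{(\lambda)}$ is the intersection of the sets $X^{(\gamma)}$ with $\gamma<\lambda$, each of which contains $Y$. Hence $\beta=\gamma+1$ with $Y\subseteq X^{(\gamma)}$ and $Y\not\subseteq X^{(\gamma+1)}$. Any $y\in Y\setminus X^{(\gamma+1)}$ lies in $X^{(\gamma)}\setminus X^{(\gamma+1)}=\Iso(X^{(\gamma)})$, so some open $U$ satisfies $U\cap X^{(\gamma)}=\{y\}$; now $Y\subseteq X^{(\gamma)}$ yields $U\cap Y=\{y\}$, i.e.\ $y\in\Iso(Y)$. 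Selecting a single element of a single non-empty set uses no choice, so the argument stays in $\mathbf{ZF}$; with this substitution the remainder of your proof goes through unchanged.
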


\begin{theorem}
\label{s1t11}
(Cf., e.g., \cite{CP}.) $(\mathbf{ZF})$ Every non-empty dense-in-itself compact second-countable Hausdorff space  is of size at least $|\mathbb{R}|$. 
\end{theorem}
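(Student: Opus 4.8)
The plan is to build, entirely within $\mathbf{ZF}$, an injection $\Phi\colon 2^{\omega}\to X$; since $|2^{\omega}|=|\mathbb{R}|$ is provable in $\mathbf{ZF}$ (by Cantor--Schr\"oder--Bernstein, using $2^{\omega}\hookrightarrow\mathbb{R}$ and $\mathbb{R}\hookrightarrow\mathcal{P}(\mathbb{Q})$ with $|\mathcal{P}(\mathbb{Q})|=|2^{\omega}|$), this gives $|\mathbb{R}|\le|X|$. The only difficulty in $\mathbf{ZF}$ is avoiding choice in the usual Cantor--scheme argument, and the remedy is that second-countability supplies a \emph{well-ordered} base. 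So I would first fix a surjection $e\colon\omega\to\mathcal{B}$ onto a countable base $\mathcal{B}$ and write $B_{n}=e(n)$; then for any definable property $P$ of basic open sets, ``the least basic open with $P$'' (namely $B_{m}$ with $m=\min\{n:B_{n}\text{ has }P\}$) is a canonical, choice-free selector, and all selections below are made this way.

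Next I would verify in $\mathbf{ZF}$ that $\mathbf{X}$ is regular. Given $x\notin C$ with $C$ closed, for each $n$ let $g(n)$ be the least $i$ with $x\in B_{i}$ and $B_{i}\cap B_{n}=\emptyset$ when such $i$ exists; this $g$ is canonical. By Hausdorffness the family $\{B_{n}:g(n)\text{ is defined}\}$ is a well-ordered open cover of the compact set $C$, so finitely many $B_{n_{1}},\dots,B_{n_{k}}$ cover $C$; then $V=\bigcup_{j}B_{n_{j}}\supseteq C$ and $U=\bigcap_{j}B_{g(n_{j})}\ni x$ are disjoint, since $U\cap B_{n_{j}}\subseteq B_{g(n_{j})}\cap B_{n_{j}}=\emptyset$. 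This proves regularity without choice.

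The heart of the argument is a choice-free Cantor scheme. I would define, by recursion on $|s|$ for $s\in 2^{<\omega}$, a function $s\mapsto m(s)\in\omega$ such that the basic open sets $U_{s}=B_{m(s)}$ satisfy $U_{\emptyset}\ne\emptyset$; $\overline{U_{s^{\frown}0}}\cup\overline{U_{s^{\frown}1}}\subseteq U_{s}$; $\overline{U_{s^{\frown}0}}\cap\overline{U_{s^{\frown}1}}=\emptyset$; and, for $|s|=n$, each of $\overline{U_{s^{\frown}0}},\overline{U_{s^{\frown}1}}$ \emph{decides} $B_{n}$, i.e.\ is contained in $B_{n}$ or disjoint from $B_{n}$. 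The recursion step takes the least pair $(p,q)$ for which $B_{p},B_{q}$ are nonempty with disjoint closures inside $U_{s}$, each deciding $B_{n}$. Such pairs exist: $U_{s}$ is nonempty and $\mathbf{X}$ is dense-in-itself, hence $U_{s}$ is infinite, and $U_{s}\setminus\partial B_{n}=(U_{s}\cap B_{n})\cup(U_{s}\setminus\overline{B_{n}})$ is a nonempty open set (a boundary has empty interior), inside which two points with disjoint closed basic neighbourhoods can be separated by Hausdorffness and shrunk by regularity, each neighbourhood landing in $U_{s}\cap B_{n}$ or in $U_{s}\setminus\overline{B_{n}}$ and so deciding $B_{n}$. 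Taking the least witness at each node keeps the whole recursion choice-free.

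Finally I would put $K_{\alpha}=\bigcap_{n}\overline{U_{\alpha\upharpoonright n}}$ for $\alpha\in 2^{\omega}$. Each $K_{\alpha}$ is nonempty, being the intersection of a decreasing sequence of nonempty closed sets in the compact space $\mathbf{X}$; and each $K_{\alpha}$ is a \emph{singleton}: if $x\ne y$ lay in $K_{\alpha}$, choose by regularity an $n$ with $x\in B_{n}$, $y\notin\overline{B_{n}}$; since $\overline{U_{\alpha\upharpoonright(n+1)}}$ decides $B_{n}$ and contains $x$, it lies in $B_{n}$, forcing $y\in B_{n}\subseteq\overline{B_{n}}$, a contradiction. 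As singletons admit a canonical selector, $\Phi(\alpha)=$ the unique point of $K_{\alpha}$ is defined without choice, and $\Phi$ is injective because distinct $\alpha,\beta$ first differ at some $n$, whence $\overline{U_{\alpha\upharpoonright(n+1)}}\cap\overline{U_{\beta\upharpoonright(n+1)}}=\emptyset$ and $K_{\alpha}\cap K_{\beta}=\emptyset$. I expect the main obstacle to be precisely this ``deciding'' clause: it is what upgrades the scheme from pairwise disjoint nonempty closed sets (from which no injection into $X$ can be read off without choice) to singletons, and it must be combined with the $\mathbf{ZF}$-proof that compact Hausdorff second-countable spaces are regular so that every selection remains canonical.
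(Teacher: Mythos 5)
Your proof is correct, and it is worth noting that the paper does not actually prove Theorem \ref{s1t11} at all: it quotes it as a known result of \v Cech--Pospi\v sil (the ``Cf., e.g., \cite{CP}'' citation), so there is no internal proof to match yours against. The closest arguments the paper does carry out are the proofs of Theorem \ref{s3p06} and Theorem \ref{s4t06}, which run the same Cantor-scheme construction ``using ideas from \cite{CP}'': a tree of open sets $\{B_s: s\in S\}$ with disjoint closures whose branch intersections $M_f$ are non-empty by compactness. The essential difference is how one extracts an injection of $\{0,1\}^{\omega}$ from the pairwise disjoint closed sets $M_f$ without choice: the paper sidesteps point-selection either by mapping $f\mapsto X_{m_f}\cap M_f$ into the set $\bigcup\{\mathcal{P}(X_n):n\in\omega\}$ (exploiting the cuf partition, which suffices to contradict cuf-ness but cannot give an injection into $X$ itself), or by composing with a (weak) Loeb function, which is an extra hypothesis. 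Your ``deciding'' clause --- requiring each node of the scheme to be contained in or disjoint from the $n$-th basic open set --- is precisely the device that removes this obstruction in the second-countable case: it forces each $K_{\alpha}$ to be a singleton, so the selector is canonical and the injection lands in $X$ with no choice principle at all. Combined with your least-index selections from the well-ordered base (both in the regularity lemma and in the recursion), this gives a complete, self-contained $\mathbf{ZF}$ proof of the quoted theorem, and in fact a slightly stronger payoff than the paper's internal variants: it needs neither a cuf/countability assumption on the underlying set nor any Loeb-type hypothesis, only second countability.
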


\begin{proposition}
\label{s1p12}
(Cf. \cite[Proposition 2.1.11]{kunzi} and \cite{kw1}.) $(\mathbf{ZF})$ 
If $d$ is a quasi-metric on $X$ such that $\langle X, \tau(d)\rangle$ is compact, then $d$ is strong.
 \end{proposition}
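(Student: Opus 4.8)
The plan is to reduce the global inclusion $\tau(d)\subseteq\tau(d^{-1})$ to a purely local statement and then to extract the required radius from a \emph{finite} subcover, so as to avoid any appeal to a choice principle. First I would record the elementary fact that every $d$-ball $B_{d}(x,r)$ belongs to $\tau(d)$: for $y\in B_{d}(x,r)$ and $s=r-d(x,y)>0$, the triangle inequality $d(x,w)\leq d(x,y)+d(y,w)$ gives $B_{d}(y,s)\subseteq B_{d}(x,r)$. Consequently, to prove $\tau(d)\subseteq\tau(d^{-1})$ it suffices to show that for every $x_{0}\in X$ and every $r_{0}>0$ the ball $B_{d}(x_{0},r_{0})$ is a $\tau(d^{-1})$-neighbourhood of $x_{0}$, i.e.\ that there is $\varepsilon>0$ with $B_{d^{-1}}(x_{0},\varepsilon)\subseteq B_{d}(x_{0},r_{0})$. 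Indeed, given $V\in\tau(d)$ and $x\in V$, one picks $n$ with $B_{d}(x,\frac{1}{2^{n}})\subseteq V$, applies the local statement with $r_{0}=\frac{1}{2^{n}}$, and then chooses $m$ with $\frac{1}{2^{m}}\leq\varepsilon$; this passage is entirely pointwise and uses no choice.

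The heart of the argument is the local statement, and this is where compactness enters. Fixing $x_{0}$ and $r_{0}>0$, I would set $A=X\setminus B_{d}(x_{0},r_{0})=\{z\in X:d(x_{0},z)\geq r_{0}\}$. Since $B_{d}(x_{0},r_{0})\in\tau(d)$, the set $A$ is $\tau(d)$-closed, hence $\tau(d)$-compact as a closed subset of a compact space (an implication requiring no choice). Every $z\in A$ satisfies $z\neq x_{0}$, so $d(z,x_{0})>0$, and the family $\{B_{d}(z,\tfrac{1}{2}d(z,x_{0})):z\in A\}$ is a $\tau(d)$-open cover of $A$. By compactness there are finitely many $z_{1},\dots,z_{k}\in A$ whose balls cover $A$, and I would put $\varepsilon=\min\{\tfrac{1}{2}d(z_{i},x_{0}):1\leq i\leq k\}>0$.

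I would then verify that $d(z,x_{0})>\varepsilon$ for every $z\in A$. Given such a $z$, choose $i$ with $z\in B_{d}(z_{i},\tfrac{1}{2}d(z_{i},x_{0}))$, so that $d(z_{i},z)<\tfrac{1}{2}d(z_{i},x_{0})$; the triangle inequality $d(z_{i},x_{0})\leq d(z_{i},z)+d(z,x_{0})$ then yields $d(z,x_{0})>\tfrac{1}{2}d(z_{i},x_{0})\geq\varepsilon$. Hence no point of $A$ lies in $B_{d^{-1}}(x_{0},\varepsilon)=\{z\in X:d(z,x_{0})<\varepsilon\}$, that is, $B_{d^{-1}}(x_{0},\varepsilon)\subseteq B_{d}(x_{0},r_{0})$, which is exactly the local statement.

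The step I expect to be the main obstacle is precisely the one dictated by the ambient theory. The intuitive proof would fix $x_{0},r_{0}$, select for each $n$ a witness $z_{n}$ with $d(z_{n},x_{0})<\frac{1}{2^{n}}$ but $d(x_{0},z_{n})\geq r_{0}$, and derive a contradiction from an accumulation point of that sequence; but this selection is an instance of countable choice and is therefore unavailable in $\mathbf{ZF}$. The covering argument above is designed to circumvent this by producing the uniform radius $\varepsilon$ directly from a finite subcover together with a minimum over a finite index set, neither of which invokes any form of $\mathbf{AC}$.
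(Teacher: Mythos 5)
Your proof is correct. One structural point to note first: the paper does not actually prove Proposition \ref{s1p12} — it quotes it from K\"unzi's survey and from \cite{kw1} — so there is no internal proof to compare against; what you have supplied is a self-contained argument, and it is exactly the kind of choice-free covering argument that the $\mathbf{ZF}$ setting demands. Your reduction to the local statement is sound (the passage from ``for every $x\in V$ there exists $m$'' to $V\in\tau(d^{-1})$ is a pointwise existence claim, not a selection, so no choice is used), the triangle inequalities are applied in the correct order for a non-symmetric $d$ (you need $d(z_i,x_0)\leq d(z_i,z)+d(z,x_0)$, which is the legitimate instance), and the identification $B_{d^{-1}}(x_0,\varepsilon)=\{z:d(z,x_0)<\varepsilon\}$ matches the paper's conventions. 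Two small points deserve explicit mention. First, the degenerate case $A=X\setminus B_d(x_0,r_0)=\emptyset$ should be disposed of separately (take $\varepsilon=1$), since otherwise your $\varepsilon$ is a minimum over an empty index set. Second, extracting the centres $z_1,\dots,z_k$ from a finite subcover involves choosing, for each of the finitely many sets in the subcover, one point of $A$ witnessing it as a ball; this is an instance of finite choice, which is provable in $\mathbf{ZF}$, but in a paper whose whole point is bookkeeping of choice principles it is worth a sentence saying so. Your closing diagnosis is also accurate: the naive sequential argument (pick $z_n$ with $d(z_n,x_0)<\tfrac{1}{2^n}$ and $d(x_0,z_n)\geq r_0$, then use an accumulation point) does require countable choice, and replacing it by the finite-subcover argument is precisely how one makes the result choice-free.
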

 
 \begin{theorem} 
 \label{Umt}
(Cf. \cite{gt}.) $(\mathbf{ZF})$ 
\begin{enumerate}
\item[(i)] (Cf. \cite{gt}.) (Urysohn's Metrization Theorem) Every second-countable $T_3$-space is metrizable.
\item[(ii)] (Cf. \cite{ktw1}.) Every $T_3$-space which has a cuf base can be embedded in a metrizable Tychonoff cube and that, it is metrizable.
\item[(iii)] (Cf. \cite{ktw1}.) A $T_3$-space $\mathbf{X}$ is embeddable in a compact metrizable Tychonoff cube if and only if $\mathbf{X}$ is embeddable in the Hilbert cube $\mathbf[0, 1]^{\omega}$. 
\end{enumerate}
 \end{theorem}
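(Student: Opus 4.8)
The plan is to prove all three parts through a single mechanism: producing, entirely canonically (so as to avoid any appeal to choice), a separating family of Urysohn functions indexed by the base, and then embedding $\mathbf X$ into a product of unit intervals whose metrizability is verified by hand.

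For (i), fix an enumeration $\mathcal B=(B_n)_{n\in\omega}$ of a countable base and set $P=\{(n,m)\in\omega\times\omega : \cl_{\mathbf X}(B_n)\subseteq B_m\}$, a definable, hence canonically enumerable, subset of $\omega\times\omega$. The first step is to show in $\mathbf{ZF}$ that a $T_3$-space with a countable base is normal; the point is that the usual proof can be made choice-free by taking, for disjoint closed $A,B$, the \emph{whole} definable subfamilies $\mathcal U=\{B_n:\cl(B_n)\cap B=\emptyset\}$ and $\mathcal V=\{B_n:\cl(B_n)\cap A=\emptyset\}$ (which cover $A$ and $B$ respectively by regularity) and then performing the standard shrinking $U'_n=U_n\setminus\bigcup_{k\le n}\cl(V_k)$, $V'_n=V_n\setminus\bigcup_{k\le n}\cl(U_k)$ along the given enumeration. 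Since normality is now witnessed canonically, the Urysohn-lemma construction (interpolating over the dyadic rationals and always taking the canonical interpolant furnished by the normality construction) yields, for each $(n,m)\in P$, a well-defined continuous $f_{n,m}:\mathbf X\to[0,1]$ with $f_{n,m}\!\upharpoonright\!\cl(B_n)=0$ and $f_{n,m}\!\upharpoonright\!(X\setminus B_m)=1$, with no choices made. Regularity together with the base property shows that $(f_{n,m})_{(n,m)\in P}$ separates points from closed sets, so the diagonal map $e(x)=(f_{n,m}(x))_{(n,m)\in P}$ is a homeomorphic embedding of $\mathbf X$ into $[0,1]^{P}\cong[0,1]^{\omega}$. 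As the Hilbert cube carries the explicit metric $\rho(x,y)=\sum_i 2^{-i}|x_i-y_i|$, and subspaces of metrizable spaces are metrizable, $\mathbf X$ is metrizable.

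For (ii) I would run the same argument with a cuf base $\mathcal B=\bigcup_n\mathcal F_n$ ($\mathcal F_n$ finite) in place of the countable base. The normality proof goes through verbatim once the finite ``levels'' replace single indices: using $C_n=\bigcup\{\cl(V):V\in\mathcal V\cap\mathcal F_n\}$ and $D_n=\bigcup\{\cl(U):U\in\mathcal U\cap\mathcal F_n\}$ (finite unions of closures, hence closed and canonical) and subtracting $\bigcup_{k\le n}C_k$, $\bigcup_{k\le n}D_k$ produces the required disjoint open sets without choosing anything. The index set $P=\{(U,V)\in\mathcal B\times\mathcal B:\cl(U)\subseteq V\}$ is now a cuf set (a subset of the cuf set $\mathcal B\times\mathcal B$), and $e$ embeds $\mathbf X$ into $[0,1]^{P}$. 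The remaining point is that a Tychonoff cube $[0,1]^{P}$ over a cuf index $P=\bigcup_n P_n$ is metrizable in $\mathbf{ZF}$: the formula $\rho(x,y)=\sum_n 2^{-n}\max_{j\in P_n}|x_j-y_j|$ is a well-defined metric inducing the product topology, the maxima being taken over finite sets and therefore requiring no choice.

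For (iii), the implication from the Hilbert cube is immediate: $[0,1]^{\omega}$ is metrizable and, in $\mathbf{ZF}$, compact---being a continuous image of the Cantor cube $\mathbf 2^{\omega}$, which is homeomorphic to the (closed, hence compact) Cantor subset of the $\mathbf{ZF}$-compact interval $[0,1]$---so it is itself a compact metrizable Tychonoff cube, and any space embeddable in it is embeddable in a compact metrizable Tychonoff cube. For the converse, suppose $\mathbf X$ embeds in a compact metrizable cube $[0,1]^{J}$. Metrizability forces $J$ to be a cuf set: a countable neighbourhood base at the point $\mathbf 0$ restricts only a cuf set $J_0$ of coordinates, and if some $j\in J\setminus J_0$ then the point agreeing with $\mathbf 0$ except at $j$ would lie in every basic neighbourhood of $\mathbf 0$, contradicting the Hausdorff property; hence $J=J_0$ is cuf. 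The decisive step---and the place I expect the real difficulty---is to upgrade ``cuf'' to ``countable'': one must use compactness of $[0,1]^{J}$ to show that $J$ is well-orderable (equivalently, that the finite blocks of $J$ can be enumerated simultaneously), after which $[0,1]^{J}\cong[0,1]^{\omega}$ and the embedding of $\mathbf X$ factors through the Hilbert cube. Compactness is genuinely needed here, since by (ii) an arbitrary cuf-indexed cube is only known to be metrizable, not second-countable; the natural tool is a finite-intersection-property argument inside $\mathbf 2^{J}$ producing the missing simultaneous choice, and making such an argument deliver a full enumeration rather than a mere multiple choice is the crux.
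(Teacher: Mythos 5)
First, a point of comparison: the paper contains no proof of this theorem at all — it is quoted in Subsection \ref{s1.3} as a known result, with (i) attributed to \cite{gt} and (ii)--(iii) to \cite{ktw1} — so your argument has to stand on its own. Parts (i) and (ii) do stand: the canonical (definable) normality witnesses taken along the given enumeration of the base (resp.\ along the finite levels $\mathcal F_n$ of the cuf base), the canonical Urysohn functions indexed by the definable set $P=\{(U,V)\in\mathcal B\times\mathcal B:\cl_{\mathbf X}(U)\subseteq V\}$ (a cuf set, being a subset of a cuf set), the diagonal embedding, and the explicit metric $\rho(x,y)=\sum_n 2^{-n}\max_{j\in P_n}|x_j-y_j|$ on a cuf-indexed cube are all choice-free and correct. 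The easy direction of (iii) is also fine: $[0,1]^{\omega}$ is compact in $\mathbf{ZF}$ (continuous image of $\mathbf 2^{\omega}$, or Theorem \ref{t:loeb}).

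The genuine gap is in (iii)($\Rightarrow$), in two places. First, your argument that metrizability of $[0,1]^{J}$ forces $J$ to be cuf does not work as written: the members of a countable neighbourhood base at $\mathbf 0$ need not be basic boxes, an open set can have full projection on a coordinate $j$ without containing the point $e_j$ (the characteristic function of $\{j\}$), and selecting a basic box inside each $W_n$ is exactly a countable-choice step. This is repairable canonically: fix a metric $d$ inducing the product topology and put $J_n=\{j\in J: d(\mathbf 0,e_j)\geq \frac{1}{n}\}$; every basic box about $\mathbf 0$ contains $e_j$ for all $j$ outside its finite support, so if some $J_n$ were infinite, $\mathbf 0$ would be an accumulation point of $\{e_j: j\in J_n\}$, contradicting that the $\tau(d)$-ball $B_d(\mathbf 0,\frac{1}{n})$ misses that set; hence each $J_n$ is finite and $J=\bigcup_{n}J_n$ is cuf. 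Second, and decisively, you leave the key step unproved: you only predict that a finite-intersection-property argument should upgrade ``cuf'' to ``countable'', and you doubt it can deliver a full enumeration rather than a multiple choice. It can, provided one codes \emph{orderings} rather than elements: after disjointifying, write $J=\bigcup_n J_n$ with the $J_n$ finite and pairwise disjoint, and let $S_n\subseteq[0,1]^{J_n}$ be the finite (hence closed) set of all bijections of $J_n$ onto $\{k/|J_n|: k<|J_n|\}$. The sets $C_N=\{x\in[0,1]^{J}: (\forall n\leq N)\; x\upharpoonright J_n\in S_n\}$ are closed, decreasing and non-empty (each $C_N$ requires only finitely many choices from finite sets), so compactness of $[0,1]^{J}$ yields $x\in\bigcap_{N\in\omega}C_N$, which linearly orders all the $J_n$ simultaneously; hence $J$ is denumerable and $[0,1]^{J}\cong[0,1]^{\omega}$, completing the proof. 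Note that this use of compactness is unavoidable, which confirms your closing remark: if $\mathbf{CAC}_{fin}$ fails, there is an uncountable cuf set $J$; then $[0,1]^{J}$ is a metrizable Tychonoff cube by your own (ii)-metric, it cannot be compact (else the argument above would make $J$ countable), and the subspace $\{e_j: j\in J\}\cup\{\mathbf 0\}$, a copy of $\mathbf D(\infty)$ for an uncountable discrete cuf space, embeds in $[0,1]^{J}$ but not in the second-countable Hilbert cube.
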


Several essential applications of Theorem \ref{Umt}(ii), especially to the theory of Hausdorff compactifications in $\mathbf{ZF}$, have been shown in \cite{ktw1} recently. We show some other applications of Theorem \ref{Umt}(ii) in the forthcoming Sections \ref{s3} and \ref{s4}.

\begin{theorem}
\label{qm1}
(Cf. \cite{kw1}.) $(\mathbf{ZF})$
\begin{enumerate}  
\item[($a$)] For every compact Hausdorff, quasi metric space $\mathbf{X}=\langle X,d\rangle$
the following are equivalent:
\begin{enumerate}
\item[(i)] $\mathbf{X}$ is Loeb;
\item[(ii)] $\langle X, d^{-1}\rangle$ is separable;
\item[(iii)] $\mathbf{X}$ and $\langle X,d^{-1}\rangle$ are both separable;
\item[(iv)] $\mathbf{X}$ is second-countable.
\end{enumerate}
 In particular, every compact, Hausdorff, quasi-metrizable Loeb space is metrizable.
\item[($b$)] $\mathbf{CAC}$ implies that every compact, Hausdorff quasi-metrizable space is metrizable.
\end{enumerate}
\end{theorem}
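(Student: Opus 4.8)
The plan is to prove the cycle (i)$\Rightarrow$(ii)$\Rightarrow$(iv)$\Rightarrow$(i), together with the trivial equivalence (ii)$\Leftrightarrow$(iii), and then to read off both the ``in particular'' clause and part ($b$) from the same constructions. First I would record two $\mathbf{ZF}$ reductions. Since $\langle X,\tau(d)\rangle$ is compact, Proposition \ref{s1p12} gives that $d$ is strong, i.e. $\tau(d)\subseteq\tau(d^{-1})$; hence $\tau(d^{\star})=\tau(d)\vee\tau(d^{-1})=\tau(d^{-1})$, so the \emph{metric} $d^{\star}$ induces exactly the topology $\tau(d^{-1})$, and ``$\langle X,d^{-1}\rangle$ is separable'' means precisely ``there is a countable $d^{\star}$-dense set''. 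Secondly, each forward ball $B_d(x,r)$ is $\tau(d)$-open, so $\{B_d(x,\varepsilon):x\in X\}$ is a $\tau(d)$-open cover; compactness then yields a finite forward-$\varepsilon$-net, i.e. $d$ is precompact. Since a $\tau(d^{-1})$-dense set is automatically $\tau(d)$-dense (the latter topology being coarser), this already gives (ii)$\Rightarrow$(iii), while (iii)$\Rightarrow$(ii) is trivial.

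For (i)$\Rightarrow$(ii) I would use a Loeb function $\ell$ and a transfinite selection rather than a naive greedy net. Fix $\varepsilon=1/2^{n}$ and define by transfinite recursion $p^{n}_{\xi}=\ell\big(X\setminus\bigcup_{\eta<\xi}B_d(p^{n}_{\eta},\varepsilon)\big)$ as long as the indicated $\tau(d)$-closed set is non-empty. The points $p^n_\xi$ are pairwise distinct, so the recursion halts at some ordinal $\theta_n$ with $\{B_d(p^{n}_{\eta},\varepsilon):\eta<\theta_n\}$ covering $X$; by compactness I extract a finite subcover, choosing $E_n\subseteq\theta_n$ least in a fixed canonical well-ordering of $[\theta_n]^{<\omega}$, and set $F_n=\{p^{n}_{\eta}:\eta\in E_n\}$. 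The crucial point is that one need \emph{not} bound the length $\theta_n$ (it may be infinite): compactness extracts a finite forward-$\varepsilon$-net regardless. Because each $F_n$ is canonically enumerated by the finitely many ordinals in $E_n$, the union $D=\bigcup_{n}F_n$ is an \emph{explicitly} enumerable image of a subset of $\omega\times\omega$, hence countable in $\mathbf{ZF}$ with no appeal to choice; and since the radius-$1/2^n$ forward balls of $F_n$ cover $X$, the set $D$ meets every $B_{d^{-1}}(x,1/2^n)$, so $D$ is $\tau(d^{-1})$-dense, proving (ii).

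For (ii)$\Rightarrow$(iv), given a countable set $D$ dense in $\tau(d^{-1})=\tau(d^{\star})$ — equivalently $d^{\star}$-dense — I claim $\{B_d(a,1/2^{m}):a\in D,\ m\in\omega\}$ is a countable base for $\tau(d)$. Given $x\in V\in\tau(d)$ and $k$ with $B_d(x,1/2^{k})\subseteq V$, pick $a\in D$ with $d^{\star}(x,a)<1/2^{k+1}$; then $x\in B_d(a,1/2^{k+1})\subseteq B_d(x,1/2^{k})\subseteq V$, the inclusion using the triangle inequality with $d(x,a)<1/2^{k+1}$ — this is exactly where \emph{both} $d(x,a)$ and $d(a,x)$ must be small, so where $d^{\star}$-density (not mere $\tau(d)$-density) is needed. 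For (iv)$\Rightarrow$(i) I would first show in $\mathbf{ZF}$ that a compact Hausdorff second-countable space is regular: with a fixed base $\{B_j\}$, the family of basic open sets disjoint from some basic open set containing a given point $x$ is canonical, covers any closed $F\not\ni x$ by Hausdorffness, and a finite subcover (with canonically chosen partners) separates $x$ from $F$. A Loeb function is then defined canonically: for non-empty $\tau(d)$-closed $C$, recursively put $C_{j+1}=C_j\cap\cl_{\tau(d)}(B_j)$ whenever this is non-empty; regularity guarantees any two distinct points of $C$ are separated at some stage, and the finite-intersection property of the compact space makes $\bigcap_j C_j$ a single point $\ell(C)$. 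The ``in particular'' clause follows at once: by (iv) the space is second-countable, hence $T_3$ by the same regularity argument, so Urysohn's Metrization Theorem \ref{Umt}(i) makes it metrizable.

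The main obstacle throughout is the asymmetry of $d$: forward balls are the only balls known to be $\tau(d)$-open, so compactness controls only forward covers, whereas the separation and base arguments need two-sided estimates. This is what forces both the identification $\tau(d^{-1})=\tau(d^{\star})$ (so that $d^{-1}$-separability supplies a genuinely $d^{\star}$-dense set) and the transfinite-recursion-plus-finite-subcover device in (i)$\Rightarrow$(ii), since a plain greedy forward-net need not terminate after finitely many steps. Finally, part ($b$) is immediate from the above: under $\mathbf{CAC}$, precompactness gives for each $n$ a non-empty family of finite forward-$(1/2^{n})$-nets, countable choice selects one net $F_n$ for every $n$, and $\bigcup_n F_n$ is a countable $\tau(d^{-1})$-dense set; thus (ii) holds and part ($a$) yields that the space is metrizable.
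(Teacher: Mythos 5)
Your proposal is correct, but note that this paper contains no proof of Theorem \ref{qm1} to compare against: the theorem is quoted as a known result from \cite{kw1} (the ``Cf.'' citation), so your argument can only be judged on its own merits, and on those merits it stands up. All the key steps are $\mathbf{ZF}$-legitimate: Proposition \ref{s1p12} gives strongness, and your identification $\tau(d^{\star})=\tau(d)\vee\tau(d^{-1})=\tau(d^{-1})$ is easily verified since $B_{d^{\star}}(x,r)=B_d(x,r)\cap B_{d^{-1}}(x,r)$; the transfinite recursion driven by the Loeb function terminates by Hartogs' theorem, and the canonical ordinal indexing of the finite nets $F_n$ makes $\bigcup_n F_n$ countable without any choice; the base argument in (ii)$\Rightarrow$(iv) correctly isolates where two-sided smallness of $d(x,a)$ and $d(a,x)$ is needed; and the canonical Loeb function in (iv)$\Rightarrow$(i) (shrinking a closed set through closures of basic sets, using $\mathbf{ZF}$-provable regularity of compact Hausdorff spaces and the finite intersection property) is sound. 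Incidentally, your (ii)$\Rightarrow$(iv) is essentially the same device the present paper attributes to the proof of Theorem 4.6 of \cite{kw1} when it proves its own Theorem \ref{s2t7}(i), so your route is very likely close to the original. One small elision worth flagging: in part ($b$), after $\mathbf{CAC}$ selects the nets $F_n$, the countability of $\bigcup_n F_n$ is not automatic in $\mathbf{ZF}$ (a countable union of finite sets can fail to be countable); it requires a second application of $\mathbf{CAC}$ (or $\mathbf{CAC}_{fin}$) to choose enumerations of the $F_n$ --- harmless here since $\mathbf{CAC}$ is assumed, but the step should be said explicitly, since this distinction is precisely the subject of Section \ref{s4} of the paper.
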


\begin{proposition}
\label{s1p15}
(Cf. \cite{ktw1}.) $(\mathbf{ZF})$ Every weakly Loeb regular space which admits a cuf base has a dense cuf set.
\end{proposition}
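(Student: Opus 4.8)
The plan is to build the required dense cuf set by attaching to each non-empty member of the cuf base a finite set of points produced by a weak Loeb function, and then to verify density using regularity. First I would fix a witness to the cuf base: write $\mathcal{B}=\bigcup_{n\in\omega}\mathcal{B}_n$ with each $\mathcal{B}_n$ finite, and put $\mathcal{B}^{\ast}=\bigcup_{n\in\omega}\mathcal{B}_n^{\ast}$ where $\mathcal{B}_n^{\ast}=\mathcal{B}_n\setminus\{\emptyset\}$; thus the family $\mathcal{B}^{\ast}$ of all non-empty basic open sets is again a cuf set, equipped with a fixed sequence of finite pieces. Since $\mathbf{X}$ is weakly Loeb, I would fix a weak Loeb function $F$ assigning to every non-empty closed subset $C$ of $\mathbf{X}$ a non-empty finite set $F(C)\subseteq C$.

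Next, for each $B\in\mathcal{B}^{\ast}$ the closure $\cl_{\mathbf{X}}(B)$ is a non-empty closed set, so $F(\cl_{\mathbf{X}}(B))$ is defined, and I would set
$$D=\bigcup_{B\in\mathcal{B}^{\ast}}F(\cl_{\mathbf{X}}(B)).$$
That $D$ is a cuf set is immediate from the construction, since $D=\bigcup_{n\in\omega}\bigcup_{B\in\mathcal{B}_n^{\ast}}F(\cl_{\mathbf{X}}(B))$, and for each fixed $n$ the inner union ranges over the finite set $\mathcal{B}_n^{\ast}$ and has finite terms, hence is finite; so $D$ is a countable union of finite sets.

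The remaining, and most delicate, step is density, and this is where regularity enters. I would show that $D$ meets every non-empty open set $U$. Fixing such a $U$ and a point $x\in U$, regularity (separation of the point $x$ from the closed set $X\setminus U$) yields an open $V$ with $x\in V$ and $\cl_{\mathbf{X}}(V)\subseteq U$; as $\mathcal{B}$ is a base there is $B\in\mathcal{B}^{\ast}$ with $x\in B\subseteq V$, whence $\cl_{\mathbf{X}}(B)\subseteq\cl_{\mathbf{X}}(V)\subseteq U$. Then $\emptyset\neq F(\cl_{\mathbf{X}}(B))\subseteq\cl_{\mathbf{X}}(B)\subseteq U$ and $F(\cl_{\mathbf{X}}(B))\subseteq D$, so $D\cap U\neq\emptyset$, and $D$ is dense.

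The point I expect to be the main obstacle, and which the argument is designed to sidestep, is the following: a first attempt to place a non-empty closed set \emph{inside} each $B$ before applying $F$ would require an extra, uniform selection over families of basic sets, which is not available in $\mathbf{ZF}$. The device of applying $F$ to the closures $\cl_{\mathbf{X}}(B)$ instead, and invoking regularity only at the verification stage to guarantee that for a given neighbourhood the selected points actually land inside it, is exactly what keeps the construction choiceless while still producing a \emph{dense} selection.
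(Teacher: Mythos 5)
Your proof is correct: applying the weak Loeb function to the closures $\cl_{\mathbf{X}}(B)$ of the non-empty basic sets keeps the construction choice-free, the resulting set is visibly a cuf set via the fixed decomposition $\mathcal{B}=\bigcup_{n\in\omega}\mathcal{B}_n$, and regularity is used exactly where it is needed, to find for each non-empty open $U$ a basic $B$ with $\cl_{\mathbf{X}}(B)\subseteq U$. This is essentially the same argument as the one the paper relies on (cited from the companion work), so there is nothing to add.
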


 \begin{theorem}
\label{t:loeb} 
(Cf. \cite{lo}.) $(\mathbf{ZF})$ Let $\kappa $ be an infinite cardinal number of von Neumann, $\{\mathbf{X}_{i}:i\in \kappa \}$ be a family of compact spaces, $\{f_{i}:i\in \kappa \}$ be a collection of functions such
that for every $i\in \kappa ,f_{i}$ is a Loeb function of $\mathbf{X}_{i}$. Then the Tychonoff product $%
\mathbf{X}=\prod\limits_{i\in \kappa }\mathbf{X}_{i}$ is compact. 
\end{theorem}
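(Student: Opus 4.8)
The plan is to prove that $\mathbf{X}=\prod_{i\in\kappa}\mathbf{X}_i$ is compact through the finite intersection property: in $\mathbf{ZF}$ a space is compact if and only if every family of closed sets with the finite intersection property (FIP) has non-empty intersection, and this equivalence is purely propositional, hence choice-free. So I would fix an arbitrary family $\mathcal{A}$ of closed subsets of $X$ with the FIP and construct, by transfinite recursion along the ordinal $\kappa$, a point $a=(a_\alpha)_{\alpha<\kappa}$ lying in $\bigcap\mathcal{A}$. The recursion is legitimate precisely because $\kappa$ is a von Neumann cardinal, so the index set is well-ordered; I would carry finite subfamilies of $\mathcal{A}$ along explicitly rather than closing $\mathcal{A}$ under finite intersections.

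The recursion maintains the invariant $I(\alpha)$: the partial point $a\restriction\alpha$ is compatible with the FIP, in the sense that for every finite $\mathcal{A}_0\subseteq\mathcal{A}$, every finite $F\subseteq\alpha$, and every choice of open neighbourhoods $U_i\ni a_i$ ($i\in F$), the set $\bigcap\mathcal{A}_0\cap\bigcap_{i\in F}\pi_i^{-1}(U_i)$ is non-empty. Here $I(0)$ is simply the FIP of $\mathcal{A}$, and $I$ passes through limit stages for free, since any finite $F$ below a limit ordinal is already bounded below it.

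The heart of the argument is the successor step, and this is exactly where the Loeb functions are indispensable. Given $a\restriction\alpha$ satisfying $I(\alpha)$, for each datum $(\mathcal{A}_0,F,(U_i)_{i\in F})$ I would form the closed subset of $X_\alpha$
$$K(\mathcal{A}_0,F,(U_i))=\cl_{\mathbf{X}_{\alpha}}\Bigl(\pi_\alpha\bigl[\textstyle\bigcap\mathcal{A}_0\cap\bigcap_{i\in F}\pi_i^{-1}(U_i)\bigr]\Bigr).$$
By $I(\alpha)$ each such set is non-empty, and by merging finitely many data (take the union of the $\mathcal{A}_0$'s, the union of the $F$'s, and the intersection of the neighbourhoods at each coordinate) one checks that the whole family $\{K(\mathcal{A}_0,F,(U_i))\}$ has the FIP in $X_\alpha$. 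Since $\mathbf{X}_\alpha$ is compact, the intersection $K_\alpha$ of all these sets is a non-empty closed subset of $X_\alpha$, and I would define $a_\alpha=f_\alpha(K_\alpha)$. Selecting $a_\alpha$ this way, through the \emph{given} Loeb function, is what keeps the entire construction inside $\mathbf{ZF}$: the value is canonically determined by $a\restriction\alpha$ (and the fixed data $\mathcal{A}$, $f_\alpha$) rather than produced by an unavailable choice repeated over the $\kappa$ stages. A short check that $a_\alpha\in K(\mathcal{A}_0,F,(U_i))$ for every datum then yields $I(\alpha+1)$.

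Finally, from $I(\kappa)$ I would conclude $a\in\bigcap\mathcal{A}$: for any $A\in\mathcal{A}$ and any basic neighbourhood $W=\bigcap_{i\in F}\pi_i^{-1}(U_i)$ of $a$, applying $I(\kappa)$ with $\mathcal{A}_0=\{A\}$ gives $A\cap W\neq\emptyset$, so every basic neighbourhood of $a$ meets $A$ and hence $a\in\cl_{\mathbf{X}}(A)=A$. I expect the main obstacle to be not any single topological estimate but the bookkeeping that guarantees the recursion is genuinely choice-free: one must confirm that each coordinate $a_\alpha$ is a function solely of $a\restriction\alpha$ and the fixed parameters, so that the transfinite recursion theorem of $\mathbf{ZF}$ applies verbatim, with the Loeb functions $f_\alpha$ doing precisely the work that Tychonoff's theorem would otherwise extract from the axiom of choice. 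Verifying that the FIP transfers to the projected families $K(\cdots)$ is the routine but essential technical check underpinning the whole construction.
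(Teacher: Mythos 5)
Your proposal is correct and is essentially the proof the paper relies on: Theorem \ref{t:loeb} is quoted from Loeb's article \cite{lo}, whose argument is precisely this transfinite recursion along the well-ordered index set, maintaining a finite-intersection-property invariant, projecting to the $\alpha$-th factor, intersecting the closures of the projections (non-empty by compactness of $\mathbf{X}_\alpha$), and selecting the coordinate $a_\alpha$ canonically with the given Loeb function $f_\alpha$ so that no appeal to choice is made across the $\kappa$ stages. Your successor-step verification, the merging argument for the FIP of the projected family, and the final check that $a\in\cl_{\mathbf{X}}(A)=A$ for each $A\in\mathcal{A}$ all match the cited argument.
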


\begin{theorem}
\label{BPIeq} (Cf., e.g., \cite[Theorem 4.37]{her} and \cite[Forms: 14, 14 A, 14 J]{hr}.) $(\mathbf{ZF})$ The following statements are equivalent to $\mathbf{BPI}$:
\begin{enumerate}
\item[(i)] For every non-empty set $X$, every filter in $\mathcal{P}(X)$ can be enlarged to an ultrafilter in $\mathcal{P}(X)$.
\item[(ii)] Every product of compact Hausdorff spaces is compact.
\end{enumerate}
\end{theorem}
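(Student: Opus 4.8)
The plan is to establish the cycle of implications $\mathbf{BPI}\Rightarrow(\mathrm{i})\Rightarrow(\mathrm{ii})\Rightarrow\mathbf{BPI}$, which yields all the stated equivalences at once (these are classical, cf.\ the cited \cite{her} and \cite{hr}, but I would give the self-contained argument). For $\mathbf{BPI}\Rightarrow(\mathrm{i})$, note that for a non-empty set $X$ the power set $\mathcal{P}(X)$ is a Boolean algebra under $\cup$, $\cap$ and complementation. Given a proper filter $F$ in $\mathcal{P}(X)$, I would form its dual ideal $I=\{X\setminus A:A\in F\}$ and pass to the quotient Boolean algebra $\mathcal{P}(X)/I$, which is non-degenerate precisely because $F$ is proper (so $\emptyset\notin F$, whence $X\notin I$). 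Applying $\mathbf{BPI}$ to $\mathcal{P}(X)/I$ gives a prime ideal, whose preimage under the canonical projection is a prime ideal $P$ of $\mathcal{P}(X)$ with $I\subseteq P$. Then $\mathcal{U}=\mathcal{P}(X)\setminus P$ is an ultrafilter enlarging $F$, since for $B\in F$ one has $X\setminus B\in I\subseteq P$, and a proper prime ideal cannot contain both $B$ and $X\setminus B$.

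For $(\mathrm{i})\Rightarrow(\mathrm{ii})$, I would use the ultrafilter characterization of compactness. Let $\{\mathbf{X}_j:j\in J\}$ be compact Hausdorff spaces and $\mathbf{X}=\prod_{j\in J}\mathbf{X}_j$; the case $X=\emptyset$ is trivial, so assume $X\neq\emptyset$. First, in $\mathbf{ZF}$ alone, every ultrafilter on a compact space converges: if an ultrafilter $\mathcal{V}$ converged to no point, then the union of all open sets not belonging to $\mathcal{V}$ would be an open cover, a finite subcover of which would force $\emptyset\in\mathcal{V}$. In a Hausdorff space such a limit is unique, so no choice is needed to name it. Now take any ultrafilter $\mathcal{U}$ on $X$; for each $j$ the pushforward $\pi_j[\mathcal{U}]=\{A\subseteq X_j:\pi_j^{-1}[A]\in\mathcal{U}\}$ is an ultrafilter on $X_j$ and hence converges to a unique $x_j$, and the assignment $j\mapsto x_j$ is a genuine function because each $x_j$ is canonically determined. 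A routine check on basic open neighbourhoods shows $\mathcal{U}$ converges to $x=(x_j)_{j\in J}$. Thus every ultrafilter on $\mathbf{X}$ converges, and to conclude compactness I would invoke $(\mathrm{i})$: were there an open cover of $\mathbf{X}$ with no finite subcover, the complements would have the finite intersection property and generate a proper filter, which by $(\mathrm{i})$ extends to a non-convergent ultrafilter, a contradiction.

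For $(\mathrm{ii})\Rightarrow\mathbf{BPI}$, let $B$ be a Boolean algebra. Since $\mathbf{2}$ is compact Hausdorff, $(\mathrm{ii})$ makes the Cantor cube $\mathbf{2}^{B}$ compact. Inside $\mathbf{2}^{B}$ I would consider, for each defining identity of a Boolean homomorphism (such as $f(a\wedge b)=\min\{f(a),f(b)\}$ and $f(\neg a)=1-f(a)$), the set of functions $f\colon B\to 2$ satisfying it; each such set depends on only finitely many coordinates and is therefore closed in $\mathbf{2}^{B}$. Any finite subfamily of these conditions mentions only finitely many elements of $B$, which generate a finite subalgebra $B_0$; a finite Boolean algebra provably has a homomorphism into $\mathbf{2}$ in $\mathbf{ZF}$, and extending it by $0$ off $B_0$ produces an $f$ meeting the chosen conditions. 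Hence the family has the finite intersection property, and compactness of $\mathbf{2}^{B}$ delivers a point in the total intersection, i.e.\ a homomorphism $f\colon B\to\mathbf{2}$, whose kernel $f^{-1}(0)$ is a prime ideal of $B$.

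The step I expect to be most delicate is $(\mathrm{ii})\Rightarrow\mathbf{BPI}$, not because the combinatorics are hard but because one must check that every auxiliary construction stays inside $\mathbf{ZF}$: the existence of a homomorphism on a finite Boolean algebra, the closedness of the coordinate conditions, and above all the fact that extracting the prime ideal rests only on the bare compactness of $\mathbf{2}^{B}$ together with the finite intersection property, with no hidden appeal to choice beyond $(\mathrm{ii})$. Comparable care is needed in $(\mathrm{i})\Rightarrow(\mathrm{ii})$ to guarantee that the limit point $x$ is defined \emph{canonically}, via Hausdorffness, rather than selected by an illicit choice.
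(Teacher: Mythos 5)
Your proposal is correct: each of the three implications $\mathbf{BPI}\Rightarrow(\mathrm{i})$, $(\mathrm{i})\Rightarrow(\mathrm{ii})$, $(\mathrm{ii})\Rightarrow\mathbf{BPI}$ goes through in $\mathbf{ZF}$ as you describe, and you correctly isolate the two genuinely choice-sensitive points (the possibly empty product, handled trivially, and the canonical -- not chosen -- limit of each pushforward ultrafilter, secured by Hausdorffness). Note, however, that the paper itself offers no proof of this statement: it is quoted as a known result, with the work delegated to the cited references (Herrlich's Theorem 4.37 and Forms 14, 14~A, 14~J in Howard--Rubin), and your cycle of implications is precisely the standard argument found there -- filter extension via the quotient algebra $\mathcal{P}(X)/I$, the ultrafilter-convergence characterization of compactness for products of Hausdorff spaces, and the compactness of $\mathbf{2}^{B}$ applied to the clopen sets of finitely determined homomorphism conditions. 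So your write-up is a correct self-contained reconstruction of the cited proof rather than a new route; the only cosmetic point worth adding is that in $(\mathrm{i})$ ``filter'' must be read as ``proper filter'' (as you implicitly do), since the improper filter admits no ultrafilter extension.
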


\begin{theorem}
\label{BPIandNAS}
(Cf. \cite{hr} and \cite{ktw1}.) $(\mathbf{ZF})$ $\mathbf{BPI}$ implies $\mathbf{NAS}$ but this implication is not reversible.
\end{theorem}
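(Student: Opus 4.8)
The plan is to treat the two assertions separately, since they call for completely different techniques: the forward implication $\mathbf{BPI}\Rightarrow\mathbf{NAS}$ is a pure $\mathbf{ZF}$ deduction, whereas the non-reversibility demands a model of $\mathbf{ZF}$ in which $\mathbf{NAS}$ holds but $\mathbf{BPI}$ fails. I would establish the deduction first and then devote most of the effort to locating the separating model.

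For $\mathbf{BPI}\Rightarrow\mathbf{NAS}$, I would route through the Ordering Principle (every set admits a linear order, Form 30 of \cite{hr}): it is classical, and recorded in \cite{hr}, that $\mathbf{BPI}$ implies the Ordering Principle, so it suffices to prove in $\mathbf{ZF}$ that no amorphous set is linearly orderable. Suppose $A$ is amorphous and $<$ linearly orders $A$. The key first step is that, for each $x\in A$, exactly one of $L(x)=\{y\in A: y<x\}$ and $R(x)=\{y\in A: x<y\}$ is infinite: at least one is, because $A=L(x)\cup\{x\}\cup R(x)$ is infinite, and not both, since otherwise $L(x)$ and $\{x\}\cup R(x)$ would split $A$ into two infinite pieces, contradicting amorphousness. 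Put $B=\{x\in A: L(x)\text{ is finite}\}$ and $C=A\setminus B=\{x\in A: R(x)\text{ is finite}\}$. Amorphousness forces one of $B,C$ to be finite; assuming $C$ finite (the other case is symmetric), $B$ is infinite, and $x\mapsto|L(x)|$ is a strictly increasing, hence injective, map of $B$ into $\omega$. Its image is an infinite subset of $\omega$, so $B$ is countably infinite; but a countably infinite subset of $A$ is impossible, since splitting it into two infinite halves would partition $A$ into two infinite pieces. This contradiction gives $\mathbf{BPI}\Rightarrow\mathbf{NAS}$ (cf. \cite{hr}).

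For the non-reversibility, the target is a model of $\mathbf{NAS}\wedge\neg\mathbf{BPI}$, and the clean observation I would exploit is that $\mathbf{NAS}$ is already implied by $\mathbf{IDI}$: an amorphous set has no countably infinite subset (again because such a subset would split it into two infinite pieces), hence is Dedekind-finite, so once every infinite set is Dedekind-infinite there are no amorphous sets. Since $\mathbf{DC}$ (indeed $\mathbf{CAC}$) implies $\mathbf{IDI}$, it is enough to produce a model of $\mathbf{CAC}\wedge\neg\mathbf{BPI}$, and then $\mathbf{NAS}$ comes for free. The hard part will be the simultaneous requirement, because the most economical permutation/symmetric models witnessing $\neg\mathbf{BPI}$ (those with a single orbit of atoms) typically carry an amorphous set and so violate $\mathbf{NAS}$; one must instead use a model whose generic or atomic data are organized so that countable choice survives while no free ultrafilter is created. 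A suitable choice is a model of $\mathbf{ZF}+\mathbf{DC}$ in which $\mathbf{BPI}$ fails, for instance one where every set of reals has the Baire property: there the absence of a free ultrafilter on $\omega$ shows that the Fréchet filter admits no ultrafilter extension, so $\mathbf{BPI}$ fails by Theorem \ref{BPIeq}(i), while $\mathbf{DC}$ delivers $\mathbf{NAS}$. I would present the concrete separating model as in \cite{ktw1} and verify the two clauses there, the delicate point being precisely the failure of $\mathbf{BPI}$, which via Theorem \ref{BPIeq}(i) reduces to the non-existence of the relevant ultrafilter in the model.
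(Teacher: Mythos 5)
Your proposal is correct, and it supplies a genuine proof where the paper offers none: Theorem \ref{BPIandNAS} appears in Subsection \ref{s1.3} as a quoted known result, with the argument delegated to \cite{hr} and \cite{ktw1}. Both halves of your argument are sound. For the forward direction, routing through the Ordering Principle is the standard and essentially unavoidable path (note that a more naive ultrafilter argument would fail: on an amorphous set the cofinite filter already \emph{is} a nonprincipal ultrafilter in $\mathbf{ZF}$, so the mere existence of free ultrafilters granted by $\mathbf{BPI}$ does not contradict amorphousness), and your combinatorial verification that an amorphous set admits no linear order --- the dichotomy on $L(x)$ versus $R(x)$, the strictly monotone map $x\mapsto|L(x)|$ into $\omega$, and the observation that amorphous sets have no countably infinite subsets --- is complete and correct in $\mathbf{ZF}$. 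For non-reversibility, the chain $\mathbf{DC}\Rightarrow\mathbf{IDI}\Rightarrow\mathbf{NAS}$ is right, and the needed model of $\mathbf{ZF}+\mathbf{DC}+\neg\mathbf{BPI}$ is available with less machinery than your Baire-property detour (Shelah/Solovay-type models): Feferman's forcing model $\mathcal{M}2$ of \cite{hr} satisfies $\mathbf{DC}\wedge\neg\mathbf{BPI}$, a fact this very paper invokes in the proof of Theorem \ref{s2t2}(v), so you could cite it directly and keep the whole separation argument inside resources the paper already uses. The only cosmetic slip is your closing promise to present ``the concrete separating model as in \cite{ktw1}'' --- your argument does not actually depend on whatever model that reference uses, and is cleaner for it.
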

 
\begin{theorem}
\label{s1t:main1}
(Cf. \cite{ktw1}.) $(\mathbf{ZF})$ For every locally compact Hausdorff space $\mathbf{X}$, the following conditions are all equivalent:
\begin{enumerate}
\item[(a)] every non-empty second-countable compact Hausdorff space is a remainder of $\mathbf{X}$;
\item[(b)] the Cantor cube $\mathbf{2}^{\omega}$ is a remainder of $\mathbf{X}$;
\item[(c)] there exists a family $\mathcal{V}=\{\mathcal{V}^n_i: n\in\mathbb{N}, i\in \{1,\ldots, 2^n\}\}$ such that, for every $n\in\mathbb{N}$, the following conditions are satisfied:
\begin{enumerate}
\item[(i)] for every $i\in\{1,\ldots, 2^n\}$, $\mathcal{V}_i^n$ is a non-empty family of open sets of $\mathbf{X}$ such that $\mathcal{V}_i^n$ is stable under finite unions and finite intersections, and, for every $U\in V_i^n$, the set $\cl_{\mathbf{X}}U$ is non-compact;
\item[(ii)]  for every $i\in\{1,\ldots, 2^n\}$ and for any $U, V\in\mathcal{V}_i^n$, $\cl_{\mathbf{X}}(U)\setminus V$ is compact;
\item[(iii)] for every pair $i,j$ of distinct elements of $\{1,\ldots, 2^n\}$, for any $W\in \mathcal{V}_{i}^n$ and $G\in\mathcal{V}_j^n$,  there exist $U\in \mathcal{V}_{2i-1}^{n+1}, V\in \mathcal{V}_{2i}^{n+1}$ with $\cl_{\mathbf{X}}(U\cup V)\setminus W$ compact and $\cl_{\mathbf{X}}(( U\cup V)\cap G)$ compact; 
\item[(iv)] if, for every $i\in\{1,\ldots, 2^n\}$,   $V_i\in\mathcal{V}_i^n$, then $X\setminus\bigcup\limits_{i=1}^{2^n}V_i$ is compact.
\end{enumerate}
\end{enumerate}
\end{theorem}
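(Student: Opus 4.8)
The plan is to close the loop $(a)\Rightarrow(b)\Rightarrow(c)\Rightarrow(b)$ together with $(b)\Rightarrow(a)$, so that the two nontrivial equivalences fall out. The implication $(a)\Rightarrow(b)$ is immediate: working in $\mathbf{ZF}$, the Cantor cube $\mathbf{2}^{\omega}$ is a non-empty, Hausdorff, second-countable space, and it is compact because it is homeomorphic to the middle-thirds Cantor subset of $[0,1]$, whose compactness is provable in $\mathbf{ZF}$. For $(b)\Rightarrow(a)$ I would first show, entirely in $\mathbf{ZF}$, that every non-empty second-countable compact Hausdorff space $\mathbf{K}$ is a continuous image of $\mathbf{2}^{\omega}$: by Theorem~\ref{Umt}(ii)--(iii) such a $\mathbf{K}$ embeds as a closed subspace of the Hilbert cube $[0,1]^{\omega}$; the Hilbert cube is the canonical (choice-free) continuous image of $\mathbf{2}^{\omega}$ under coordinatewise binary expansion; and a non-empty closed set $C\subseteq\mathbf{2}^{\omega}$ admits the canonical ``left-most branch'' retraction $r\colon\mathbf{2}^{\omega}\to C$, defined by recursion on coordinates with no appeal to choice. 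Composing these yields a continuous surjection $f\colon\mathbf{2}^{\omega}\to\mathbf{K}$. Given a Hausdorff compactification $\gamma\mathbf{X}$ whose remainder $R$ is homeomorphic to $\mathbf{2}^{\omega}$, I would then collapse $R$ along the fibres of $f$, leaving $X$ untouched; since $f$ is a closed map with compact fibres and $X$ is open in $\gamma\mathbf{X}$ by local compactness, the quotient is a Hausdorff compactification of $\mathbf{X}$ with remainder homeomorphic to $\mathbf{K}$.

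For $(b)\Rightarrow(c)$ I would read the required tree of open families directly off the clopen base of the Cantor remainder. Fix a homeomorphism $\varphi\colon R\to\mathbf{2}^{\omega}$ and, for each binary string $s$ of length $n$, let $C_{s}=\varphi^{-1}(\{b:b\upharpoonright n=s\})$, a clopen subset of $R$; index the $2^{n}$ strings of length $n$ as $s^{n}_{1},\dots,s^{n}_{2^{n}}$ so that the two one-bit extensions of $s^{n}_{i}$ are $s^{n+1}_{2i-1}$ and $s^{n+1}_{2i}$. I would let $\mathcal{V}^{n}_{i}$ consist of the open sets $U\subseteq X$ whose trace $\cl_{\gamma\mathbf{X}}(U)\cap R$ is a neighbourhood of $C_{s^{n}_{i}}$ inside $C_{s^{n}_{i}}$ (a filter-of-neighbourhoods formulation chosen precisely to secure stability under finite unions and finite intersections). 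Non-emptiness comes from normality of the compact Hausdorff space $\gamma\mathbf{X}$ (provable in $\mathbf{ZF}$), which separates the disjoint closed sets $C_{s^{n}_{i}}$ and $R\setminus C_{s^{n}_{i}}$. The key bookkeeping identity is that the compact subsets of $\mathbf{X}$ are exactly the closed subsets of $\gamma\mathbf{X}$ disjoint from $R$; with it, condition~(i) follows because each such $\cl_{\gamma\mathbf{X}}(U)$ meets $R$, condition~(ii) because two members with the same trace agree modulo a set whose $\gamma\mathbf{X}$-closure misses $R$, condition~(iv) from the clopen partition $R=\bigsqcup_{i}C_{s^{n}_{i}}$, and condition~(iii) from the splitting $C_{s^{n}_{i}}=C_{s^{n+1}_{2i-1}}\sqcup C_{s^{n+1}_{2i}}$ together with disjointness from the sibling cells.

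The substantive direction is $(c)\Rightarrow(b)$, where I would build the compactification by hand. As a set let $\gamma X=X\sqcup\mathbf{2}^{\omega}$; keep the $\mathbf{X}$-topology on $X$ (so $X$ is open) and, for a branch $b\in\mathbf{2}^{\omega}$, declare the sets $N(b,n,V)=\{b':b'\upharpoonright n=b\upharpoonright n\}\cup V$ to be basic neighbourhoods, where $i$ is the index of $b\upharpoonright n$ and $V$ ranges over $\mathcal{V}^{n}_{i}$. I would verify that these form a base: condition~(ii) guarantees that the neighbourhood filter of $b$ is independent of the representative $V$ modulo compact sets, condition~(iii) gives the nesting of successive levels, and condition~(i) keeps each $V$ non-empty so that $X$ is dense. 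Hausdorffness splits into three cases---two interior points (from $\mathbf{X}$), an interior point versus a branch (separate the point off by a compact neighbourhood using local compactness and shrink $V$ via~(ii)), and two distinct branches (they first differ below some level $n$, whence distinct indices and, by~(iii), tails that are disjoint modulo a compact set). Finally compactness is checked directly: given an open cover, finitely many basic neighbourhoods of branches cover the compact set $\mathbf{2}^{\omega}$; passing to a common level and using stability under finite unions, condition~(iv) forces the uncovered part of $X$ to be compact, so finitely many further cover-members suffice.

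I expect the main obstacle to be exactly this last construction. Everything must be carried out in $\mathbf{ZF}$ with no selections made along the $\omega$-indexed tree, so the proofs of Hausdorffness and, above all, compactness cannot borrow Tychonoff-type machinery and must be extracted solely from the combinatorics of~(i)--(iv). The linchpin is that conditions~(ii) and~(iv), in the presence of stability under finite unions, let one replace the awkward ``modulo compact'' data at different levels by a single compact residue in $\mathbf{X}$; making this replacement rigorous, and simultaneously confirming that the declared neighbourhoods genuinely generate a compact Hausdorff topology with remainder homeomorphic to $\mathbf{2}^{\omega}$, is the delicate heart of the argument.
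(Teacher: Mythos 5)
A preliminary remark: the paper does not prove Theorem \ref{s1t:main1} at all --- it is imported from \cite{ktw1} as a known result --- so there is no internal proof to compare against, and your argument has to be judged on its own merits. Your overall architecture is reasonable, and the implications between (a) and (b) are sound and genuinely choice-free: compactness of $\mathbf{2}^{\omega}$, the embedding of a second-countable compactum in the Hilbert cube via Theorem \ref{Umt}, the leftmost-branch retraction onto a non-empty closed subset of $\mathbf{2}^{\omega}$, and the collapse of the remainder along a closed equivalence relation (Hausdorffness of the quotient via closedness of the relation, normality of compact Hausdorff spaces, and $T_1$-ness, all available in $\mathbf{ZF}$) are correct. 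The defects are concentrated in the two implications involving (c), and they have a common source: conditions (i)--(iv) constrain the families $\mathcal{V}^n_i$ only \emph{modulo compact subsets of} $X$, while both of your constructions treat them as exact.

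The serious gap is in (c)$\Rightarrow$(b). Your basic neighbourhoods $N(b,n,V)=\{b':b'\upharpoonright n=b\upharpoonright n\}\cup V$ cannot work, and no amount of delicacy in the verification repairs them: nothing in (i)--(iv) produces exact inclusions between members of families at different levels (only inclusions modulo compact sets), so these sets do not even form a base; worse, the resulting topology need not be Hausdorff. Concretely, take any $\mathbf{X}$ and family $\mathcal{V}$ satisfying (c), fix a non-empty open $B\subseteq X$ with $\cl_{\mathbf{X}}(B)$ compact, and replace every $\mathcal{V}^n_i$ by $\{V\cup B:V\in\mathcal{V}^n_i\}$. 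All four conditions survive: for instance (ii) holds because $\cl_{\mathbf{X}}(V_1\cup B)\setminus (V_2\cup B)\subseteq(\cl_{\mathbf{X}}(V_1)\setminus V_2)\cup(\cl_{\mathbf{X}}(B)\setminus B)$, and (iv) because $X\setminus\bigcup_i(V_i\cup B)\subseteq X\setminus\bigcup_i V_i$. But now every neighbourhood of every branch contains $B$, so branches cannot be separated from points of $B$ (the space is not even $T_1$ there). Note also that your appeal to ``shrink $V$ via (ii)'' is vacuous: (ii) never provides a member of $\mathcal{V}^n_i$ avoiding a prescribed compact set. The missing ingredient is to take as basic neighbourhoods $\{b':b'\upharpoonright n=b\upharpoonright n\}\cup(V\setminus K)$ with $K$ ranging over compact subsets of $X$; then local compactness of $\mathbf{X}$ (which your sketch never uses) separates interior points from branches, (i) guarantees $V\setminus K\neq\emptyset$ so that $X$ remains dense, (ii) and finitely many applications of (iii) give the base property and the level-alignment needed in the compactness argument. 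A parallel, smaller flaw sits in (b)$\Rightarrow$(c): defining $\mathcal{V}^n_i$ by a condition on the trace $\cl_{\gamma\mathbf{X}}(U)\cap R$ alone secures neither (i) nor (ii) --- two open sets can have trace exactly $C_{s^n_i}$ while being disjoint (killing intersection-stability), or while $\cl_{\mathbf{X}}(U_1)\setminus U_2$ accumulates on $C_{s^n_i}$ and is non-compact (killing (ii)). What must be a neighbourhood is not the trace but the set itself: take $\mathcal{V}^n_i=\{W\cap X: W\ \text{open in}\ \gamma\mathbf{X},\ C_{s^n_i}\subseteq W,\ \cl_{\gamma\mathbf{X}}(W)\cap R\subseteq C_{s^n_i}\}$, non-empty by normality of $\gamma\mathbf{X}$; then $\cl_{\mathbf{X}}(U_1)\setminus U_2\subseteq\cl_{\gamma\mathbf{X}}(U_1)\setminus W_2$ misses $R$, hence is compact, and (i), (iii), (iv) follow by the same trace computations you indicate.
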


\begin{theorem} 
\label{s1t19}
(Cf. \cite{ktw1}.) $(\mathbf{ZF})$ For a set $D$, let  $\mathbf{D}=\langle D, \mathcal{P}(D)\rangle$. Then the following statements hold:
\begin{enumerate}
\item[(i)] If $\mathbf{D}$ is a cuf space, then every non-empty second-countable compact Hausdorff space is a remainder of a metrizable compactification of $\mathbf{D}$. In particular, all non-empty second countable compact Hausdorff spaces are remainders of metrizable compactifications of $\mathbb{N}$.
\item[(ii)] If $D$ is weakly Dedekind-infinite, then every non-empty second-count\-able compact Hausdorff space is a remainder of $\mathbf{D}$. 
\end{enumerate}
\end{theorem}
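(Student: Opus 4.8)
The plan is to reduce everything to the characterisation in Theorem \ref{s1t:main1}. First I would observe that $\mathbf{D}=\langle D,\mathcal{P}(D)\rangle$ is a locally compact Hausdorff space, since each singleton $\{x\}$ is a compact open neighbourhood of $x$; hence Theorem \ref{s1t:main1} applies with $\mathbf{X}=\mathbf{D}$, and it suffices to verify any one of the equivalent conditions (a)--(c) for $\mathbf{D}$. I would verify the combinatorial condition (c). The crucial simplification is that in a discrete space closures are trivial and a subset is compact if and only if it is finite, so for $\mathbf{D}$ condition (c) asks exactly for a family $\{\mathcal{V}_i^n\}$, indexed by the binary tree $2^{<\omega}$, of nonempty families of infinite subsets of $D$, each stable under finite unions and finite intersections, such that at every level $n$ the underlying sets partition $D$ modulo finite sets and the two children of a node split their parent modulo finite sets. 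Producing such a family amounts to producing a refining, tree-indexed partition of $D$ into infinite pieces without appealing to any form of choice, and this is where the hypotheses on $D$ enter.

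For part (ii) I would first use the standard $\mathbf{ZF}$ fact that a set is weakly Dedekind-infinite if and only if it admits a surjection onto $\omega$ (see \cite{hr}); thus weak Dedekind-infiniteness of $D$ yields a surjection $\phi\colon D\to\omega$. In $\omega$ itself I would fix, once and for all, an explicit refining dyadic partition $\{A_s:s\in 2^{<\omega}\}$ with $A_{\emptyset}=\omega$, $A_s=A_{s0}\cup A_{s1}$, $A_{s0}\cap A_{s1}=\emptyset$ and every $A_s$ infinite (this exists outright because $\omega$ is well-ordered). Pulling back along $\phi$, I set $P_s=\phi^{-1}(A_s)$; since $\phi$ is onto and each $A_s$ is infinite, every $P_s$ is infinite, the sets $\{P_s:s\in 2^n\}$ partition $D$ for each $n$, and $P_s=P_{s0}\cup P_{s1}$. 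Taking $\mathcal{V}_i^n$ to be the filter of cofinite subsets of the $i$-th set $P_s$ with $s\in 2^n$ (under the binary ordering that makes the children of node $i$ the nodes $2i-1,2i$), conditions (i)--(iv) of Theorem \ref{s1t:main1}(c) follow by direct inspection: stability and infiniteness give (i); cofiniteness in a common $P_s$ gives (ii); choosing the two children $P_{s0},P_{s1}$ gives (iii), as their union is exactly the parent and is disjoint from any set sitting over a different node; and the level-$n$ partition gives (iv). Theorem \ref{s1t:main1} then yields condition (a), i.e. every nonempty second-countable compact Hausdorff space is a remainder of $\mathbf{D}$.

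For part (i), I would first note that an infinite cuf set is weakly Dedekind-infinite: writing $D=\bigcup_n F_n$ with $F_n$ finite and increasing, the map $x\mapsto\min\{n:x\in F_n\}$ has finite fibres and hence infinite range, producing a surjection onto $\omega$. So the remainder conclusion already follows from part (ii); the extra content is that the compactification may be taken metrizable. Here I would exploit that, because $D$ is cuf, $\mathbf{D}$ has the cuf base $\{\{x\}:x\in D\}$. I would then argue that the compactification $\gamma\mathbf{D}$ built above carries a cuf base, namely the $D$-singletons together with a countable family of neighbourhoods of the remainder points read off from a countable base of the second-countable remainder; the latter family is countable (hence cuf) because the gluing in Theorem \ref{s1t:main1} is governed by the countable tree $2^{<\omega}$ and by cofinite subsets of the pieces $P_s$. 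Being compact Hausdorff, $\gamma\mathbf{D}$ is $T_3$, so Theorem \ref{Umt}(ii) upgrades ``cuf base'' to ``metrizable'', as wanted; the final assertion is then the special case $D=\mathbb{N}$, which is a countable, hence cuf, set.

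The main obstacle, in both parts, is to keep the construction choice-free. The decisive device is that pulling a fixed partition of the well-ordered set $\omega$ back along a surjection requires no choices at all while automatically producing infinite (non-compact) pieces; without it one would be tempted to split $D$ directly and then be forced to choose witnesses. For part (i) the delicate point is the cuf-base claim: one must show that the neighbourhoods of the remainder points in the compactification of Theorem \ref{s1t:main1} can be captured by a cuf family, which requires inspecting that construction rather than invoking it as a black box, and it is precisely here that the cuf-ness of $D$, as opposed to mere weak Dedekind-infiniteness, is essential.
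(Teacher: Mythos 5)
Your part (ii) is correct and complete. The ZF equivalence ``$D$ is weakly Dedekind-infinite iff there is a surjection of $D$ onto $\omega$'' is indeed standard, pulling back a fixed dyadic partition $\{A_s: s\in 2^{<\omega}\}$ of $\omega$ along such a surjection costs no choice, and your verification of clauses (i)--(iv) of Theorem \ref{s1t:main1}(c) for the filters of cofinite subsets of the pieces $P_s=\phi^{-1}(A_s)$ goes through exactly as you say: in a discrete space closures are trivial and compact means finite, so everything reduces to finite/cofinite bookkeeping (for (iii), take $U=P_{s0}\cap W$ and $V=P_{s1}\cap W$). Note that the paper itself does not prove Theorem \ref{s1t19} --- it quotes it from \cite{ktw1} --- so no direct comparison with ``the paper's proof'' is possible; but your tree-of-partitions verification of condition (c) is structurally the same device the paper uses inside the proof of Theorem \ref{thm:3}, so the approach is certainly in the intended spirit.

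Part (i), however, has a genuine gap, and you have named it yourself without closing it: the claim that ``the compactification $\gamma\mathbf{D}$ built above carries a cuf base.'' Theorem \ref{s1t:main1} is a pure existence statement; from condition (c) it delivers \emph{some} Hausdorff compactification with the prescribed remainder and gives no grip whatsoever on its topology, and its proof is not available in this paper (it too is quoted from \cite{ktw1}), so ``reading off'' or ``inspecting'' neighbourhoods of remainder points is not a move your argument is entitled to. To close the gap you must exhibit a compactification explicitly and verify the cuf base by hand. For instance: fix a countable base $\{W_k: k\in\omega\}$ of $\mathbf{K}$ and a canonical continuous surjection $q:\mathbf{2}^{\omega}\to \mathbf{K}$ (obtained choice-free from an embedding $K\hookrightarrow [0,1]^{\omega}$, the canonical surjection of $\mathbf{2}^{\omega}$ onto $[0,1]^{\omega}$, and the leftmost-branch retraction of $\mathbf{2}^{\omega}$ onto the closed preimage of $K$); then topologize $D\cup K$ by declaring points of $D$ isolated and taking as basic neighbourhoods of $z\in K$ the sets $\left(W_k\cup\bigcup\{P_s: [s]\subseteq q^{-1}(W_k)\}\right)\setminus F$ with $z\in W_k$ and $F\in [D]^{<\omega}$. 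One then checks that this is a Hausdorff compactification of $\mathbf{D}$ with remainder $\mathbf{K}$ (compactness uses compactness of $\mathbf{K}$ and of $\mathbf{2}^{\omega}$), and that the displayed sets together with the singletons form a base. Only here does cuf-ness of $D$ enter correctly: this base is indexed by $\omega\times[D]^{<\omega}$ plus $D$, and $[D]^{<\omega}$ is a cuf set because $D$ is --- note it is \emph{not} countable, contrary to your phrase ``a countable family of neighbourhoods''; arbitrary finite subsets of $D$ must be subtracted to obtain a base at remainder points --- so the base is cuf and Theorem \ref{Umt}(ii) yields metrizability. Your reduction of the remainder-existence half of (i) to (ii) is fine; it is this metrizability half that has to be built on the page rather than attributed to a black box.
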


\begin{theorem}
\label{ind1}
(Cf. \cite{ktw1}.) $(\mathbf{ZF})$
\begin{enumerate}
\item[(i)] $\mathbf{IDFBI}$ implies  $\mathbf{NAS}$ but this implication cannot be reversed. 
\item[(ii)] The statement ``All non-empty metrizable compact spaces are remainders of metrizable compactifications of $\mathbb{N}$'' is equivalent to $\mathbf{M}(C, S)$ and, thus, it implies $\mathbf{CAC}_{fin}$.
\end{enumerate}
\end{theorem}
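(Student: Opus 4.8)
The plan is to treat the two parts separately, proving each implication by a direct argument and handling the non-reversibility by pointing to a witnessing model.

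For part~(i), to prove that $\mathbf{IDFBI}$ implies $\mathbf{NAS}$ I would argue contrapositively: assuming an amorphous set $A$ exists, I show that $\mathbf{2}^{\omega}$ cannot be a remainder of the discrete space $\mathbf{D}=\langle A,\mathcal{P}(A)\rangle$. Suppose toward a contradiction that $\gamma\mathbf{D}$ is a Hausdorff compactification with $\gamma D\setminus A$ homeomorphic to $\mathbf{2}^{\omega}$. Since $\mathbf{D}$ is locally compact Hausdorff, its dense copy $A$ is open in $\gamma D$, so the remainder $R=\gamma D\setminus A$ is closed and hence compact. The Cantor cube is disconnected, so $R=C_{0}\cup C_{1}$ for disjoint non-empty sets that are clopen in $R$ and hence compact in $\gamma D$; as $\gamma\mathbf{D}$ is Hausdorff, I separate them by disjoint open sets $O_{0}\supseteq C_{0}$, $O_{1}\supseteq C_{1}$. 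Density of $A$ gives $C_{i}\subseteq O_{i}\subseteq\cl_{\gamma\mathbf{D}}(O_{i})=\cl_{\gamma\mathbf{D}}(A\cap O_{i})$. The sets $A\cap O_{0}$ and $A\cap O_{1}$ are disjoint subsets of the amorphous set $A$, so at least one of them, say $A\cap O_{0}$, is finite; then $\cl_{\gamma\mathbf{D}}(A\cap O_{0})=A\cap O_{0}\subseteq A$, forcing $\emptyset\neq C_{0}\subseteq A\cap R=\emptyset$, a contradiction. Thus $\mathbf{IDFBI}$ implies $\mathbf{NAS}$. (Alternatively one runs this through Theorem~\ref{s1t:main1}: in a discrete space closures are trivial and compactness is finiteness, so the family $\mathcal{V}$ of condition~(c) cannot exist over an amorphous set, since at level $1$ condition~(iii) would force an intersection of two cofinite subsets of $A$ to be finite.)

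For the non-reversibility in part~(i), I would exhibit a model of $\mathbf{ZF}$ in which $\mathbf{NAS}$ holds yet some infinite set $D$ is weakly Dedekind-finite. By Theorem~\ref{s1t:main1}, if $\mathbf{2}^{\omega}$ were a remainder of $\langle D,\mathcal{P}(D)\rangle$ then the witnessing family $\mathcal{V}$ would produce, level by level, infinitely many distinct subsets of $D$, making $\mathcal{P}(D)$ Dedekind-infinite; hence any infinite weakly Dedekind-finite set witnesses $\neg\mathbf{IDFBI}$. So it suffices to produce a model with no amorphous sets at all but containing an infinite weakly Dedekind-finite (necessarily non-amorphous, infinitely splittable) set. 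I would obtain this from a permutation model of $\mathbf{ZFA}$ with a sufficiently homogeneous symmetry group and transfer it to $\mathbf{ZF}$ by the Jech--Sochor/Pincus machinery; this construction is the main obstacle of part~(i), and the details are those of \cite{ktw1}.

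For part~(ii), write $\Phi$ for the displayed statement. The implication $\mathbf{M}(C,S)\Rightarrow\Phi$ is immediate from Theorem~\ref{s1t19}(i): a compact metrizable space is Hausdorff, and under $\mathbf{M}(C,S)$ it is separable, hence second-countable; since $\mathbb{N}$ is a cuf space, Theorem~\ref{s1t19}(i) makes every non-empty second-countable compact Hausdorff space a remainder of a metrizable compactification of $\mathbb{N}$. For $\Phi\Rightarrow\mathbf{M}(C,S)$, take a compact metrizable $\mathbf{K}$ and use $\Phi$ to realize it as the remainder of a metrizable compactification $\gamma\mathbb{N}$, fixing a compatible metric $\rho$ and the given enumeration of the dense set $\mathbb{N}$. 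The key point is that $\gamma\mathbb{N}$ is separable, so the distance maps $p\mapsto\rho(p,n)$ ($n\in\mathbb{N}$) separate the points of the compact set $K$ and yield a canonical homeomorphic embedding $e\colon K\to\mathbb{R}^{\omega}$. For each pair $(n,m)$ with $K\cap\overline{B}_{\rho}(n,1/2^{m})\neq\emptyset$ I let $y_{n,m}$ be the lexicographically least point of this non-empty compact set for the coordinatewise order induced by $e$; such a least point exists without any choice, as the unique element of the intersection of the nested non-empty compact sets obtained by successively minimizing the coordinates. The countable set $\{y_{n,m}\}$ is dense in $K$, so $K$ is separable and $\mathbf{M}(C,S)$ holds. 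Finally, ``$\Phi$ implies $\mathbf{CAC}_{fin}$'' follows from the equivalence together with the known implication $\mathbf{M}(C,S)\Rightarrow\mathbf{CAC}_{fin}$.

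The step I expect to be most delicate is the choice-free extraction of a dense subset of the remainder in $\Phi\Rightarrow\mathbf{M}(C,S)$: what rescues it from needing $\mathbf{CAC}_{fin}$ is precisely that $\Phi$ embeds the arbitrary compact metrizable $\mathbf{K}$ into a separable space carrying an explicit countable separating family (the enumerated copy of $\mathbb{N}$), which is exactly the extra data that a general compact metric space lacks. For part~(i) the hard work lies instead in the model construction witnessing $\mathbf{NAS}\wedge\neg\mathbf{IDFBI}$.
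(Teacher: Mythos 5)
Your proof of part (ii) is correct: the direction $\mathbf{M}(C,S)\Rightarrow\Phi$ (writing $\Phi$ for the compactification statement) is indeed immediate from Theorem~\ref{s1t19}(i), and your choice-free extraction of a countable dense subset of the remainder --- embedding $K$ into $\mathbb{R}^{\omega}$ via the distance functions to the enumerated dense copy of $\mathbb{N}$, and then taking as $y_{n,m}$ the canonical point obtained by successive coordinate minimization inside the non-empty compact set $K\cap\overline{B}_{\rho}(n,2^{-m})$ --- is a valid $\mathbf{ZF}$ argument. Likewise your direct proof that $\mathbf{IDFBI}$ implies $\mathbf{NAS}$ is fine; it amounts to showing that every Hausdorff compactification of an amorphous discrete space has a connected remainder, a special case of Theorem~\ref{comprem}(ii), which is exactly the fact this paper uses for the same implication chain in Theorem~\ref{s2t2}(iii). (Note that the paper itself gives no proof of Theorem~\ref{ind1}: it is quoted from \cite{ktw1} among the preliminaries, so your attempt can only be compared with the machinery the paper develops elsewhere.)

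The non-reversibility half of part (i), however, contains a genuine gap: your lemma that ``any infinite weakly Dedekind-finite set witnesses $\neg\mathbf{IDFBI}$'' is not provable in $\mathbf{ZF}$, and in fact it is refuted, consistently, by this very paper. Theorem~\ref{thm:3} --- the paper's main new permutation-model construction, solving Problem~\ref{s2prob:5}(ii), which \cite{ktw1} had left open --- shows that it is relatively consistent with $\mathbf{ZF}$ that there exists a dyadically filterbase infinite set which is weakly Dedekind-finite. The precise flaw in your reasoning is quantifier-level: the witnessing family $\mathcal{V}=\{\mathcal{V}_i^n: n\in\mathbb{N}, i\in\{1,\dots,2^n\}\}$ of Theorem~\ref{s1t:main1}(c) is a countable family whose members are \emph{families} of subsets of $D$, not single subsets. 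To obtain an injection $\omega\to\mathcal{P}(D)$ you would have to select one set from each $\mathcal{V}_i^n$, which is an instance of countable choice; what you get canonically without choice is only the assignment $\langle n,i\rangle\mapsto\mathcal{V}_i^n$ into $\mathcal{P}(\mathcal{P}(D))$, and Dedekind-infiniteness of $\mathcal{P}(\mathcal{P}(D))$ holds for \emph{every} infinite set, so it yields nothing. Indeed, in the model of Theorem~\ref{thm:3} such a family $\mathcal{V}$ exists and is denumerable there, yet the set of atoms remains weakly Dedekind-finite. Consequently, exhibiting a model of $\mathbf{NAS}$ containing an infinite weakly Dedekind-finite set proves nothing, since that set may itself be dyadically filterbase infinite. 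The correct route is the one the paper points to in the proof of Theorem~\ref{s2t2}(v): \cite{ktw1} constructs a $\mathbf{ZF}$-model of $\mathbf{BPI}\wedge\neg\mathbf{IDFBI}$, and since $\mathbf{BPI}$ implies $\mathbf{NAS}$ (Theorem~\ref{BPIandNAS}), that same model satisfies $\mathbf{NAS}\wedge\neg\mathbf{IDFBI}$; there the failure of $\mathbf{IDFBI}$ must be verified by a model-specific argument, weak Dedekind-finiteness of the witness being necessary (by Theorem~\ref{s1t19}(ii)) but, as Theorem~\ref{thm:3} shows, not sufficient.
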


\begin{remark}
\label{s1r21}
In $\mathbf{ZFC}$, an archetype of Theorem \ref{s1t:main1} is included in \cite[Theorem 2.1]{hm0}; however, in $\cite{ktw1}$, Theorem 2.1 of \cite{hm0} has been shown to be unprovable in $\mathbf{ZF}$. In \cite{ktw1}, an infinite set $D$ is called \emph{dyadically filterbase infinite} if $\mathbf{2}^{\omega}$ is a remainder of the discrete space $\langle D, \mathcal{P}(D)\rangle$. An equivalent purely set-theoretic definition of a dyadically filterbase infinite set is given in \cite{ktw1} and it can be easily obtained from condition ($c$) of Theorem \ref{s1t:main1} applied to discrete spaces. Clearly, $\mathbf{IDFBI}$ is equivalent to the sentence ``Every infinite set is dyadically filterbase infinite''.
\end{remark}

\begin{theorem}
\label{comprem}
(Cf. \cite{kw0}.) $(\mathbf{ZF})$
\begin{enumerate}
\item[(i)] For every non-empty compact Hausdorff space $\mathbf{K}$, there exists a Dede\-kind-infinite discrete space $\mathbf{D}$ such that $\mathbf{K}$ is a remainder of $\mathbf{D}$.
\item[(ii)] If $D$ is an infinite set, then the Alexandroff compactification of the discrete space $\mathbf{D}=\langle D, \mathcal{P}(D)\rangle$ is the unique (up to the equivalence) Hausdorff compactification of $\mathbf{D}$ if and only if $D$ is amorphous.
\end{enumerate}
\end{theorem}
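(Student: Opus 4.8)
The plan is to handle the two parts by separate constructions. For part~(i), fix a non-empty compact Hausdorff space $\mathbf K=\langle K,\sigma\rangle$ and produce an explicit Hausdorff compactification of a discrete space with remainder $\mathbf K$. I would let $\mathbf D=\langle D,\mathcal P(D)\rangle$ be the discrete space on $D=K\times\omega$, which is Dedekind-infinite since $n\mapsto(x_{0},n)$ embeds $\omega$ for any fixed $x_{0}\in K$, and let $\phi:D\to K$ be the projection $\phi(x,n)=x$, so that $\phi^{-1}(U)=U\times\omega$ for $U\subseteq K$. On $Y=D\cup K$ (a disjoint union) I would define a topology $\tau$ by declaring every point of $D$ to be isolated and taking as basic neighbourhoods of a point $y\in K$ the sets $U\cup(\phi^{-1}(U)\setminus F)$, where $U$ runs over the $\sigma$-open neighbourhoods of $y$ and $F$ over the finite subsets of $D$. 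Since $(U\cup(\phi^{-1}(U)\setminus F))\cap(U'\cup(\phi^{-1}(U')\setminus F'))=(U\cap U')\cup(\phi^{-1}(U\cap U')\setminus(F\cup F'))$, these sets together with the singletons $\{d\}$ form a base; in the space $\mathbf Y=\langle Y,\tau\rangle$ the set $D$ is then open, discrete and dense, and the subspace topology induced on $K$ is exactly $\sigma$.

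It remains to check in $\mathbf{ZF}$ that $\mathbf Y$ is compact and Hausdorff. Hausdorffness is routine: distinct points of $D$ are separated by singletons, a point $d\in D$ and a point $y\in K$ by $\{d\}$ and a neighbourhood of $y$ with $d\in F$, and distinct $y,y'\in K$ by $U\cup\phi^{-1}(U)$ and $U'\cup\phi^{-1}(U')$ for disjoint $\sigma$-open $U\ni y$ and $U'\ni y'$, using $\phi^{-1}(U)\cap\phi^{-1}(U')=\phi^{-1}(U\cap U')=\emptyset$. For compactness I would reduce to covers by basic open sets and then cover the compact space $\mathbf K$ by finitely many basic neighbourhoods arising from $\sigma$-open sets $U_{1},\dots,U_{m}$ and finite sets $F_{1},\dots,F_{m}$; since every $d\in D$ satisfies $\phi(d)\in U_{i}$ for some $i$, the only points of $Y$ not yet covered lie in the finite set $\bigcup_{i=1}^{m}F_{i}$, and these are covered by finitely many further members of the cover. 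As this uses only finitely many selections, it is valid in $\mathbf{ZF}$, and $\mathbf Y$ is a Hausdorff compactification of $\mathbf D$ with remainder homeomorphic to $\mathbf K$. The step I expect to be the main obstacle is precisely the design of $\tau$: naive attempts that place a full copy of $K$ at each level $K\times\{n\}$ are not compact, and it is the use of the fibres $\phi^{-1}(U)$ modulo finite sets that forces every neighbourhood of $K$ to omit only finitely many points of $D$.

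For part~(ii), first assume that $D$ is amorphous. By the uniqueness of the one-point Hausdorff compactification recorded after Definition~\ref{s1d7}, a Hausdorff compactification of $\mathbf D$ is equivalent to $\mathbf D(\infty)$ exactly when its remainder is a singleton, and since the infinite discrete space $\mathbf D$ is non-compact every remainder is non-empty; thus I only need to exclude remainders of size at least two. If some Hausdorff compactification $\gamma\mathbf D$ had distinct remainder points $p$ and $q$, I would pick disjoint $\gamma\mathbf D$-open sets $U\ni p$ and $V\ni q$ and set $A=U\cap D$ and $B=V\cap D$; density of $D$ yields $p\in\cl_{\gamma\mathbf D}(A)$ and $q\in\cl_{\gamma\mathbf D}(B)$ with $A\cap B=\emptyset$. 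Because $D$ is open and discrete in $\gamma\mathbf D$, every finite subset of $D$ is closed in $\gamma\mathbf D$, so $A$ and $B$ are both infinite; then $A$ is an infinite and co-infinite subset of $D$, contradicting amorphousness. Hence the remainder is a single point and $\gamma\mathbf D\approx\mathbf D(\infty)$.

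For the converse, suppose $D$ is infinite but not amorphous and fix a partition $D=A\cup B$ into two infinite sets, with discrete spaces $\mathbf A=\langle A,\mathcal P(A)\rangle$ and $\mathbf B=\langle B,\mathcal P(B)\rangle$. Then $\mathbf Y=\mathbf A(\infty)\oplus\mathbf B(\infty)$ is compact and Hausdorff, contains $\mathbf D$ as a dense discrete subspace, and has the two-point remainder $\{\infty_{A},\infty_{B}\}$; being a Hausdorff compactification of $\mathbf D$ whose remainder is not a singleton, it is not equivalent to $\mathbf D(\infty)$, so uniqueness fails. Within part~(ii) the delicate step is the first implication, where the fact that finite subsets of $D$ remain closed in every Hausdorff compactification has to be combined correctly with the defining dichotomy of amorphous sets.
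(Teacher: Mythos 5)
Your proposal is correct, and both constructions work in $\mathbf{ZF}$. One thing to note at the outset: the paper itself contains no proof of Theorem~\ref{comprem} — it is quoted from \cite{kw0} in the subsection of known results — so there is no internal proof to compare against; your argument stands as a self-contained justification. For (i), your ``fibre-mod-finite'' compactification of the discrete space on $K\times\omega$ (neighbourhoods of $y\in K$ of the form $U\cup(\phi^{-1}(U)\setminus F)$ with $F$ finite) is the right idea, and your observation about why the naive telescope topology with full levels $K\times\{n\}$ fails to be compact is exactly the point: with your topology, a basic cover leaves uncovered only the finite set $\bigcup_{i=1}^{m}F_{i}$, so the whole compactness argument requires only finitely many selections and is choice-free. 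For (ii), your two directions match what the paper itself implicitly relies on elsewhere: the non-amorphous direction via the two-point compactification $\mathbf{A}(\infty)\oplus\mathbf{B}(\infty)$ is precisely the mechanism behind Remark~\ref{s2r3}($b$), and the amorphous direction correctly combines $T_{1}$-ness (finite subsets of $D$ are closed, so the traces $A$, $B$ of disjoint neighbourhoods of two remainder points are infinite) with the amorphousness dichotomy. Two small points deserve a sentence each in a polished write-up: first, $K$ and $K\times\omega$ should be replaced by formally disjoint copies before forming $Y$; second, your reduction ``equivalent to $\mathbf{D}(\infty)$ iff the remainder is a singleton'' uses, beyond the uniqueness statement following Definition~\ref{s1d7}, the standard $\mathbf{ZF}$-valid fact that equivalent Hausdorff compactifications are homeomorphic via a map fixing the dense copy of $D$ and hence carrying remainder onto remainder; this is what rules out equivalence between a two-point-remainder compactification and $\mathbf{D}(\infty)$.
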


\begin{proposition}
\label{discommet} $(\mathbf{ZF})$ Let $D$ be an infinite set and let $\mathbf{D}=\langle D, \mathcal{P}(D)\rangle$. Then:
\begin{enumerate}
\item[(i)] $\mathbf{D}(\infty)$ is metrizable if and only if $D$ is a cuf set (cf. \cite{kw1});
\item[(ii)] the discrete space $\mathbf{D}$ has a metrizable compactification if and only if $D$ is a cuf set (cf. \cite{ktw1}).
\end{enumerate}
\end{proposition}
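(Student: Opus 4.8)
The plan is to reduce both equivalences to the combinatorial shape of $\tau(\infty)$ together with the behaviour of compact metric spaces in $\mathbf{ZF}$. Since $\mathbf{D}$ is discrete, its compact subsets are exactly its finite subsets, so the open neighborhoods of $\infty$ in $\mathbf{D}(\infty)$ are precisely the sets $\{\infty\}\cup(D\setminus F)$ with $F\in[D]^{<\omega}$, while every point of $D$ remains isolated. For the forward implication of (i) I would use that a metrizable space is first-countable: a countable neighborhood base at $\infty$ consists of sets $\{\infty\}\cup(D\setminus F_n)$ with each $F_n$ finite, and for each $x\in D$ the neighborhood $\{\infty\}\cup(D\setminus\{x\})$ must contain some base member, forcing $x\in F_n$ for some $n$. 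Hence $D=\bigcup_{n\in\omega}F_n$ is a cuf set, and no choice is used because the $F_n$ are recovered as the complements of the given base members.

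For the converse of (i), given a presentation $D=\bigcup_{n\in\omega}D_n$ with each $D_n$ finite, I would first pass to the disjoint refinement $E_n=D_n\setminus\bigcup_{k<n}D_k$, which needs no choice and yields a level function $\ell\colon D\to\omega$ with $\ell(x)=n$ iff $x\in E_n$. Then I would define $d$ on $D(\infty)$ by $d(x,x)=0$, $d(\infty,x)=d(x,\infty)=2^{-\ell(x)}$, and $d(x,y)=\max\{2^{-\ell(x)},2^{-\ell(y)}\}$ for distinct $x,y\in D$, checking the triangle inequality by the routine case analysis. The key verifications are that $B_d(x,2^{-\ell(x)})=\{x\}$ makes each $x\in D$ isolated, while $B_d(\infty,2^{-n})=\{\infty\}\cup(D\setminus\bigcup_{k\le n}E_k)$; since $\bigcup_n E_n=D$, these balls are cofinal among the sets $\{\infty\}\cup(D\setminus F)$, so $\tau(d)=\tau(\infty)$. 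The value of this explicit formula is that it never enumerates $D$ (which may be non-well-orderable), relying only on the finite levels.

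Statement (ii) then splits cleanly. If $D$ is a cuf set, then by (i) the space $\mathbf{D}(\infty)$ is a metrizable Hausdorff compactification of $\mathbf{D}$, giving one direction at once. For the other direction I would take an arbitrary metrizable compactification $\gamma\mathbf{D}$ with a compatible metric $\rho$ and argue in three moves. (1) In $\mathbf{ZF}$ a finite subcover of $\{B_\rho(y,\varepsilon):y\in\gamma D\}$ covers $\gamma D$ by finitely many sets of diameter at most $2\varepsilon$, so any $\delta$-separated set with $2\varepsilon<\delta$ meets each such set at most once and is therefore finite. (2) Every point of $D$ is isolated in $\gamma\mathbf{D}$: if $x\in D$ had an open $U$ with $U\cap D=\{x\}$ but $x$ were non-isolated, then some $y\in U\setminus\{x\}$ lies in the remainder, and choosing $\delta$ with $B_\rho(y,\delta)\subseteq U$, density of $D$ forces $B_\rho(y,\varepsilon)\cap D=\{x\}$ for all $\varepsilon\le\delta$, whence $\rho(x,y)<\varepsilon$ for every such $\varepsilon$ and so $x=y$, a contradiction. (3) Putting $I_n=\{y\in\gamma D:B_\rho(y,2^{-n})=\{y\}\}$, each $I_n$ is $2^{-n}$-separated, hence finite by (1), so $\Iso(\gamma D)=\bigcup_{n\in\omega}I_n$ is a cuf set; since $D\subseteq\Iso(\gamma D)$ by (2) and a subset of a cuf set is a cuf set, $D$ is a cuf set.

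The step I expect to be the main obstacle is (2), because the natural $\mathbf{ZFC}$ argument—extract a sequence in $D$ converging to the remainder point $y$ and observe it is eventually equal to $x$—silently invokes countable choice to build the sequence. The remedy I would stress is to reason entirely with the neighborhood filter: density yields $B_\rho(y,\varepsilon)\cap D\neq\emptyset$ for each $\varepsilon$, and containment in $U$ pins this intersection to $\{x\}$, giving $\rho(x,y)<\varepsilon$ for all $\varepsilon\in(0,\delta]$ without selecting any points. The only other spot demanding care is the converse of (i), where picking an actual bijection $D\cong\omega$ would be illegitimate; the level decomposition $E_n$ circumvents this.
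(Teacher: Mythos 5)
Your proof is correct, and there is nothing in this paper to compare it against: Proposition~\ref{discommet} appears in the ``known results'' subsection and is only quoted from \cite{kw1} (part (i)) and \cite{ktw1} (part (ii)), with no proof given here. Judged on its own, your argument is a valid, self-contained $\mathbf{ZF}$ proof. The forward direction of (i) correctly exploits that neighborhoods of $\infty$ are exactly the cofinite sets $\{\infty\}\cup(D\setminus F)$, so the finite sets $F_n=D\setminus B_d(\infty,2^{-n})$ are recovered canonically from the balls, with no choice; the converse's level function $\ell$ and the max-type metric (in fact an ultrametric) avoid any enumeration of $D$, and your computation of the balls at isolated points and at $\infty$ does establish $\tau(d)=\tau(\infty)$. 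In (ii), the three-step reduction --- separated sets in a compact metric space are finite, $D\subseteq\Iso(\gamma D)$, and $\Iso(\gamma D)=\bigcup_n I_n$ with each $I_n$ finite --- is exactly the kind of argument the paper itself uses elsewhere (compare the proof of Theorem~\ref{s2t4} and Remark~\ref{s2r6}, which derive cuf-ness of $\Iso(X)$ from compactness/total boundedness). One stylistic simplification: your step (2) does not need the $\varepsilon$-filter argument at all; in a $T_1$ space, if $U$ is open, $D$ is dense, and $U\cap D=\{x\}$, then $U\subseteq\cl_{\gamma\mathbf{D}}(U\cap D)=\{x\}$, so $x$ is isolated in $\gamma\mathbf{D}$ outright --- the countable-choice worry you flag never arises.
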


As we have already mentioned at the beginning of Subsection \ref{s1.1}, in the sequel, we apply not only $\mathbf{ZF}$-models but also permutation models of $\mathbf{ZFA}$. To transfer a statement $\mathbf{\Phi}$ from a permutation model to a $\mathbf{ZF}$-model, we use the Jech-Sochor First Embedding Theorem (see, e.g., \cite[Theorem 6.1]{Je}) if $\mathbf{\Phi}$ is a boundable statement. When $\mathbf{\Phi}$ has a permutation model but $\mathbf{\Phi}$ is a conjunction of statements each one of which is equivalent to $\mathbf{BPI}$ or to an injectively boundable statement, we use Pincus' transfer results (see \cite{pin1}, \cite{pin2} and \cite[Note 3, page 286]{hr}) to show that $\mathbf{\Phi}$ has a $\mathbf{ZF}$-model. The notions of boundable and injectively boundable statements can be found in \cite{pin1}, \cite[Problem 1 on page 94]{Je} and  \cite[Note 3, page 284]{hr}. Every boundable statement is equivalent to an injectively boundable one (see \cite{pin1} or \cite[Note 3, page 285]{hr}). We recommend \cite[Chapter 4]{Je} as an introduction to permutation models.

\subsection{The content concerning new results in brief}
\label{s1.4}

In Section \ref{s2}, we notice that, in $\mathbf{ZF}$, the class of all iso-dense compact Hausdorff spaces is essentially wider than the class of all Hausdorff compact scattered spaces; similarly, the class of all iso-dense compact metrizable spaces is essentially wider than the class of all compact metrizable scattered spaces.  A compact Hausdorff iso-dense space may fail to be completely regular in $\mathbf{ZF}$ (see Proposition \ref{ntych}). We show that the new form $\mathbf{INSHC}$  holds in every model of $\mathbf{ZF+BPI}$, is independent of $\mathbf{ZF}$, does not imply $\mathbf{BPI}$ and is strictly weaker than $\mathbf{IDFBI}$ (see Theorem \ref{s2t2}). We construct a new permutation model to prove that a dyadically filterbase infinite set can be weakly Dedekind-finite (see the proof to Theorem \ref{thm:3}. This solves an open problem posed by us in \cite{ktw1}. 

In Section \ref{s3},  we prove in $\mathbf{ZF}$ that if $\langle X, d\rangle$ is a quasi-metric $T_3$-space  such that $d$ is strong and either $\langle X, \tau(d)\rangle$ is limit point compact or $d^{-1}$ is precompact, then the space $\langle X, \tau(d)\rangle$ is metrizable (see Theorem \ref{s2t7}).  This result and its direct consequence that if $\langle X, d\rangle$ is compact Hausdorff quasi-metric space such that $\langle X, \tau(d^{-1})\rangle$ is iso-dense, then $\langle X, \tau(d)\rangle$ is metrizable (see Corollary \ref{s2c8}) are new applications of Theorem \ref{Umt}(ii) and adjuncts to Theorem \ref{qm1}. By applying Theorem \ref{Umt}, we show in $\mathbf{ZF}$ that if  $\langle X, d\rangle$ is an iso-dense metric space such that either $d$ is totally bounded or $\langle X, \tau(d)\rangle$ is limit point compact, then $\langle X, \tau(d)\rangle$ has a cuf base and can be embedded in a metrizable Tychonoff cube (see Theorem \ref{s2t10}). 

Section \ref{s4} concerns equivalents of $\mathbf{CAC}_{fin}$ in terms of scattered or iso-dense spaces (see Theorems \ref{s3t02} and \ref{s3c05}). Among our new equivalents of $\mathbf{CAC}_{fin}$ there are, for instance, the following sentences: (a) for every iso-dense metric space $\mathbf{X}$, if $\mathbf{X}$ is either limit point compact or totally bounded, then $\mathbf{X}$ is separable; (b) every totally bounded scattered metric space is countable; (c) every compact metrizable scattered space is countable; (d) every totally bounded, complete scattered metric space is compact. We show that, in $\mathbf{ZF}$, every compact metrizable cuf space is scattered (see Theorem \ref{s3p06}). We prove that $\mathbf{WOAC}_{fin}$ is equivalent to the sentence:   for every well-orderable non-empty set $S$ and every family $\{\langle X_s, d_s\rangle: s\in S\}$ of compact scattered metric spaces, the product $\prod\limits_{s\in S}\langle X_s, \tau(d_s)\rangle$ is compact (see Theorem \ref{woac}). Several other relevant results are included in Section \ref{s4}, too.

Section \ref{s5} concerns the problem of whether it is provable in $\mathbf{ZF}$ that every non-empty dense-in-itself compact metrizable space contains an infinite compact scattered subspace. Among other results of Section \ref{s4}, we show the following; (a)  each of $\mathbf{IDI}$, $\mathbf{WoAm}$ and $\mathbf{BPI}$  implies that every infinite compact first-countable Hausdorff space contains a copy of $\mathbb{N}(\infty)$; (b) every infinite first-countable compact Hausdorff separable space contains a copy of $\mathbb{N}(\infty)$; (c) every infinite first-countable compact Hausdorff space having an infinite cuf subset contains a copy of $\mathbf{D}(\infty)$ for some infinite discrete cuf space (see Theorem \ref{s4t2}).  We prove that the sentence ``every infinite first-countable Hausdorff compact space contains an infinite metrizable compact scattered subspace'' implies neither $\mathbf{CAC}_{fin}$ nor $\mathbf{IQDI}$, nor $\mathbf{CMC}$ in $\mathbf{ZFA}$ (see Theorem \ref{s4t04}).

Section \ref{s6} contains a shortlist of new open problems strictly relevant to the topic of this paper.

\section{From compact Hausdorff iso-dense spaces\newline that are not scattered to $\mathbf{INSHC}$}
\label{s2}

Since every isolated point of an open subspace of a topological space $\mathbf{X}$ is an isolated point of $\mathbf{X}$, it is obvious that the following proposition holds in $\mathbf{ZF}$:

\begin{proposition}
\label{s2p1}
$(\mathbf{ZF})$ Every scattered space is iso-dense.
\end{proposition}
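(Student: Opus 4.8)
The plan is to verify directly that $\Iso(X)$ meets every non-empty open subset of $\mathbf{X}$, which is precisely the assertion that $\Iso(X)$ is dense in $\mathbf{X}$. So I would fix an arbitrary non-empty open set $U\subseteq X$ and exhibit a point of $U$ that is isolated in $\mathbf{X}$.

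The key observation, already flagged in the sentence preceding the statement, is the transfer of isolatedness across open subspaces: if $U$ is open in $\mathbf{X}$ and $x$ is an isolated point of the subspace $\mathbf{U}=\langle U, \tau|_U\rangle$, then $x$ is an isolated point of $\mathbf{X}$. I would prove this by unwinding the definition of the subspace topology: $x\in\Iso(U)$ means that $\{x\}$ is $\tau|_U$-open, so $\{x\}=V\cap U$ for some $V\in\tau$; since $U\in\tau$ as well, the set $V\cap U$ is $\tau$-open, whence $\{x\}\in\tau$ and $x\in\Iso(X)$. No form of choice is needed here, as this is a single explicit computation valid in $\mathbf{ZF}$.

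With this in hand, the main step is immediate. Given the non-empty open set $U$, the subspace $\mathbf{U}$ is a non-empty subspace of the scattered space $\mathbf{X}$, so by Definition \ref{s1d1}(ii) we have $\Iso(U)\neq\emptyset$. Taking any $x\in\Iso(U)$, the transfer observation gives $x\in\Iso(X)$, and clearly $x\in U$; hence $U\cap\Iso(X)\neq\emptyset$. Since $U$ was an arbitrary non-empty open set, $\Iso(X)$ is dense in $\mathbf{X}$, i.e. $\mathbf{X}$ is iso-dense.

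There is essentially no obstacle to overcome: the argument is choice-free and elementary, because scatteredness, applied to the open subspace $\mathbf{U}$ itself, hands us a witnessing isolated point, and no selection over a family is required (we instantiate a single non-empty set). The only point deserving any care is the subspace-topology bookkeeping in the transfer observation, but that is entirely routine.
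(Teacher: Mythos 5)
Your argument is correct and is exactly the paper's own reasoning: the paper justifies the proposition by the single observation that every isolated point of an open subspace of $\mathbf{X}$ is an isolated point of $\mathbf{X}$, which is precisely your transfer lemma, applied to an arbitrary non-empty open $U$ via scatteredness. You have merely written out the routine subspace-topology details that the paper leaves implicit.
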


Let us notice that a compact Hausdorff space is iso-dense if and only if it is a Hausdorff compactification of a discrete space. Every iso-dense locally compact Hausdorff space which satisfies condition (c) of Theorem \ref{s1t:main1} has an iso-dense non-scattered Hausdorff compactification. In particular, for every dyadically filterbase infinite set $D$, the discrete space $\langle D, \mathcal{P}(D)\rangle$ has a non-scattered Hausdorff compactification.   
 It follows from Theorem \ref{s1t19}(i) that every denumerable discrete space has non-scattered metrizable Hausdorff compactifications. Thus, in $\mathbf{ZF}$, the class of all (compact) metrizable scattered spaces is essentially smaller than the class of all iso-dense (compact) metrizable spaces, and the class of all (compact Hausdorff) scattered spaces is essentially smaller than the class of all (compact Hausdorff) iso-dense spaces. 
 
 It was proved in \cite{kft} that it holds in $\mathbf{ZF}$ that every compact Hausdorff scattered space is zero-dimensional, so completely regular. Let us show that a compact Hausdorff iso-dense space may fail to be completely regular in $\mathbf{ZF}$. 

\begin{proposition}
\label{ntych}
There exists a model $\mathcal{M}$ of $\mathbf{ZF}$ in which there is a compact Hausdorff iso-dense space which is not completely regular.
\end{proposition}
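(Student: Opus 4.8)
The plan is to first reduce the statement to the failure of a Urysohn-type separation, and then to realize that failure inside a carefully engineered permutation model. The orienting remark is that, in $\mathbf{ZF}$, every compact Hausdorff space is already normal: given disjoint closed sets $A,B$, regularity follows by applying compactness of the closed set $A$ to the \emph{definable} open family $\{V\ \text{open}:\cl_{\mathbf{X}}(V)\cap B=\emptyset\}$, which covers $A$, and extracting a finite subcover $V_1,\dots,V_n$; then $\bigcup_i V_i\supseteq A$ is open with $\bigcup_i\cl_{\mathbf{X}}(V_i)$ disjoint from $B$. No choice is used, because the finite subcover is produced from a canonical cover. Consequently the \emph{only} obstruction to complete regularity is the passage from normality to an actual $[0,1]$-valued separating function, i.e. Urysohn's Lemma, whose standard proof builds a coherent $\omega$-indexed sequence of nested open sets and hence consumes a form of dependent choice. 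So it suffices to produce a compact Hausdorff iso-dense space $\mathbf{X}$ together with a point $p$ and a disjoint closed set $B$ that are separated by open sets but by no continuous $f\colon\mathbf{X}\to[0,1]$. Equivalently, recalling that compact Hausdorff iso-dense spaces are exactly the Hausdorff compactifications of discrete spaces, one wants an infinite discrete space admitting a Hausdorff compactification that is not completely regular.

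Next I would build this in a permutation model $\mathcal{N}$ of $\mathbf{ZFA}$ whose atoms carry a symmetric combinatorial scaffolding (organised, say, into a sequence of finite blocks, with the block-automorphism group and finite supports) chosen precisely so that the coherent $\omega$-sequence of open-set insertions demanded by the Urysohn recursion corresponds to a selection object that $\mathcal{N}$ omits. On $X=D\cup R$, with $D$ the discrete set of atoms (the prospective isolated points) and $R$ an explicitly described remainder, I would define a symmetric topology $\tau$ via an explicit neighbourhood base; the symmetry guarantees $\tau\in\mathcal{N}$. Then I would verify directly that $\mathbf{X}$ is iso-dense ($D$ dense, each point of $D$ isolated) and Hausdorff, and, crucially, \emph{compact}: because the base is explicit, any open cover can be reduced to a finite subcover by a combinatorial argument on the scaffolding, so compactness requires no choice.

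To see that $\mathbf{X}$ is not completely regular, suppose a continuous $f\colon\mathbf{X}\to[0,1]$ separated $p$ from $B$. Since $f\in\mathcal{N}$ has a finite support, $f$ is invariant under every block-automorphism fixing that support pointwise, hence asymptotically symmetric; its sub- and super-level sets, intersected with the tail of the scaffolding, are therefore block-invariant. Reading these level sets off across the $\omega$-many scales yields exactly the coherent selection that $\mathcal{N}$ was designed to exclude, a contradiction. Thus $\mathbf{X}$ is a compact Hausdorff iso-dense space that is not completely regular in $\mathcal{N}$. Finally, the assertion ``there is a compact Hausdorff iso-dense space that is not completely regular'' is boundable, as it asserts the existence of a single structure of bounded rank over $\mathbb{R}$, so the Jech--Sochor First Embedding Theorem transports it to a genuine model $\mathcal{M}$ of $\mathbf{ZF}$.

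The hard part will be balancing two opposing demands within one symmetric topology: there must be enough open sets for $\mathbf{X}$ to be provably compact and Hausdorff in the choiceless model (compactness being notoriously fragile there, which is exactly why the neighbourhood base must be explicit and its finite-subcover extraction purely combinatorial), yet so few symmetric continuous $[0,1]$-valued functions that no Urysohn function exists for the pair $(p,B)$. In particular one must steer clear of the degenerate situations, illustrated by one-point compactifications and by amorphous or simply paired atom sets (cf.\ Theorem \ref{comprem}(ii)), in which the only available Hausdorff compactifications are automatically completely regular; this forces the scaffolding to be rich enough to support a non-trivial Hausdorff compactification while still starving its function space. Getting a single combinatorial gadget that simultaneously achieves genuine compactness and this functional poverty is, I expect, the principal obstacle.
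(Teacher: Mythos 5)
Your reduction is sound as far as it goes: compact Hausdorff spaces are normal in $\mathbf{ZF}$ by the definable-cover argument you give, the obstruction to complete regularity is therefore of Urysohn type, iso-dense compact Hausdorff spaces are exactly the Hausdorff compactifications of discrete spaces, and Jech--Sochor transfer of a boundable existential statement is the right mechanism (it is the one the paper itself uses for Theorem \ref{thm:3}). But there is a genuine gap, and it is the entire mathematical content: the permutation model, the ``scaffolding'' on the atoms, the explicit symmetric neighbourhood base, the choice-free verification of compactness, and the argument that any supported continuous $f\colon\mathbf{X}\to[0,1]$ would encode a selection object the model omits are never constructed --- they are only described by the properties you would like them to have, and you defer exactly this (``the principal obstacle'') to future work. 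A proof cannot end where the difficulty begins: as written, nothing can be checked, and what you are sketching is a from-scratch re-proof of the known hard theorem that $\mathbf{ZF}$ does not prove every compact Hausdorff space to be completely regular.

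The paper sidesteps that difficulty completely. It takes as a known result (Good--Tree \cite{gt}) that there is a model $\mathcal{M}$ of $\mathbf{ZF}$ containing a compact Hausdorff space $\mathbf{Z}$ which is not completely regular; the issue is that $\mathbf{Z}$ need not be iso-dense. Working in $\mathcal{M}$, it then applies Theorem \ref{comprem}(i), which holds in $\mathbf{ZF}$, to obtain a Hausdorff compactification $\gamma\mathbf{D}$ of a discrete space $\mathbf{D}$ whose remainder is homeomorphic to $\mathbf{Z}$. The space $\gamma\mathbf{D}$ is compact Hausdorff and iso-dense (every point of the dense discrete set $D$ is isolated, since an open set meeting a dense set in a single point must, in a Hausdorff space, equal that singleton), and it is not completely regular because complete regularity is hereditary and the subspace $\mathbf{Z}$ is not. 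So the correct repair of your proposal is not to engineer a new permutation model but to make this one reduction: quote a model where a compact Hausdorff non-completely-regular space exists, and upgrade it to an iso-dense example via the remainder theorem.
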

\begin{proof}
Let $\mathcal{M}$ be any model of $\mathbf{ZF}$ in which there exists a compact Hausdorff, not completely regular space $\mathbf{Z}$ (cf, e.g., \cite{gt}) and let us work inside $\mathcal{M}$. By Theorem \ref{comprem}(i), it holds in $\mathcal{M}$ that there exists a Hausdorff compactification $\gamma\mathbf{D}$ of a discrete space $\mathbf{D}$ such that $\gamma D\setminus D$ is homeomorphic to $\mathbf{Z}$. Then, in $\mathcal{M}$,  $\gamma\mathbf{D}$ is a non-scattered, iso-dense compact Hausdorff space which is not completely regular.
\end{proof}

Let us shed more light on the forms $\mathbf{INSHC}$ and $\mathbf{IDFBI}$. 

\begin{theorem}
\label{s2t2}
$(\mathbf{ZF})$
\begin{enumerate} 
\item[(i)] Every compact Hausdorff space is a subspace of a compact Hausdorff iso-dense space.
\item[(ii)] Every compact second-countable Hausdorff space is a subspace of a compact second-countable iso-dense space.
\item[(iii)]  $\mathbf{DC}\rightarrow\mathbf{IWDI}\rightarrow\mathbf{IDFBI}\rightarrow\mathbf{INSHC}\rightarrow\mathbf{NAS}$; 
\item[(iv)] $\mathbf{BPI}\rightarrow\mathbf{INSHC}$;
\item[(v)] $\mathbf{INSHC}\nrightarrow \mathbf{IDFBI}$ and $\mathbf{INSHC}\nrightarrow\mathbf{BPI}$.
\end{enumerate}
\end{theorem}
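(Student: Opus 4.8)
The plan is to derive (i) and (ii) from the compactification theorems already quoted, using the observation (recorded just before the theorem) that a Hausdorff compactification of a discrete space is automatically iso-dense: if $\mathbf{D}=\langle D,\mathcal{P}(D)\rangle$ is discrete and $\gamma\mathbf{D}$ is a Hausdorff compactification, then $\mathbf{D}$ is locally compact, hence $D$ is open in $\gamma\mathbf{D}$, so each of its points is isolated and $\Iso(\gamma D)\supseteq D$ is dense. For (i), given a non-empty compact Hausdorff $\mathbf{K}$, Theorem \ref{comprem}(i) supplies a discrete $\mathbf{D}$ with a Hausdorff compactification $\gamma\mathbf{D}$ whose remainder is homeomorphic to $\mathbf{K}$; then $\gamma\mathbf{D}$ is a compact Hausdorff iso-dense space containing a copy of $\mathbf{K}$ (the empty space is trivially a subspace of a one-point space). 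For (ii), given a non-empty second-countable compact Hausdorff $\mathbf{K}$, Theorem \ref{s1t19}(i) supplies a metrizable compactification $\gamma\mathbb{N}$ of $\mathbb{N}$ with remainder homeomorphic to $\mathbf{K}$; here $\gamma\mathbb{N}$ is iso-dense as above and separable (the denumerable set $\mathbb{N}$ is dense), and I would finish with the $\mathbf{ZF}$-fact that a metric space with a denumerable dense set is second-countable: fixing a metric $d$ and an enumeration $\{x_n\}$ of $\mathbb{N}$, the family $\{B_d(x_n,1/2^m):n,m\in\omega\}$ is a countable base.

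For (iii) I would prove the four implications separately. $\mathbf{DC}\to\mathbf{IWDI}$: $\mathbf{DC}$ implies $\mathbf{IDI}$, and every Dedekind-infinite $X$ is weakly Dedekind-infinite because $x\mapsto\{x\}$ embeds $X$ into $\mathcal{P}(X)$. $\mathbf{IWDI}\to\mathbf{IDFBI}$: the Cantor cube $\mathbf{2}^{\omega}$ is a second-countable compact Hausdorff space in $\mathbf{ZF}$, so by Theorem \ref{s1t19}(ii) it is a remainder of $\langle D,\mathcal{P}(D)\rangle$ whenever $D$ is weakly Dedekind-infinite. $\mathbf{IDFBI}\to\mathbf{INSHC}$: this is essentially recorded before the theorem, since a compactification of the discrete space $\langle D,\mathcal{P}(D)\rangle$ whose remainder is $\mathbf{2}^{\omega}$ is non-scattered (its remainder $\mathbf{2}^{\omega}$ is a non-empty subspace with no isolated point). $\mathbf{INSHC}\to\mathbf{NAS}$: arguing contrapositively, if $A$ is amorphous then by Theorem \ref{comprem}(ii) the only Hausdorff compactification of $\langle A,\mathcal{P}(A)\rangle$ is $\mathbf{A}(\infty)$, and $\mathbf{A}(\infty)$ is scattered, since $A(\infty)^{(1)}=\{\infty\}$ and $A(\infty)^{(2)}=\emptyset$, so scatteredness follows from Proposition \ref{s1p10}; hence $\langle A,\mathcal{P}(A)\rangle$ has no non-scattered Hausdorff compactification.

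For (iv), working under $\mathbf{BPI}$, I would take an infinite $D$ and use the Stone space $S$ of the Boolean algebra $\mathcal{P}(D)$, that is, the space of ultrafilters on $D$ with the clopen sets $\hat{A}=\{p:A\in p\}$ as a base. Under $\mathbf{BPI}$ the space $S$ is compact; it is Hausdorff, and the principal ultrafilters form a dense set of isolated points (because $\widehat{\{d\}}$ is a singleton) homeomorphic to $\mathbf{D}$, so $S$ is an iso-dense Hausdorff compactification of $\mathbf{D}$. It remains to show $S$ is non-scattered, for which I would prove that the remainder $D^{\ast}$ of free ultrafilters (non-empty by $\mathbf{BPI}$) has no isolated point: given $p\in D^{\ast}$ and a basic neighbourhood $\hat{A}$ with $A\in p$, the set $A$ is infinite, hence (by $\mathbf{NAS}$, which follows from $\mathbf{BPI}$) splits as $A=A_1\sqcup A_2$ with both pieces infinite; the piece not belonging to $p$, say $A_2$, carries a free ultrafilter $q$ obtained by extending the cofinite filter of $A_2$ via $\mathbf{BPI}$, and then $q\in\hat{A}\cap D^{\ast}$ and $q\neq p$. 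Thus $D^{\ast}$ is a non-empty dense-in-itself subspace and $S$ is non-scattered, giving $\mathbf{INSHC}$.

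The two non-implications in (v) are the substantial part and would be obtained from independence models, transferred from $\mathbf{ZFA}$ to $\mathbf{ZF}$ by the Jech--Sochor and Pincus machinery recalled in Subsection \ref{s1.3}. For $\mathbf{INSHC}\nrightarrow\mathbf{BPI}$ I would invoke a model of $\mathbf{ZF}+\mathbf{DC}+\neg\mathbf{BPI}$: there $\mathbf{DC}$ gives $\mathbf{IWDI}$, hence $\mathbf{IDFBI}$, hence $\mathbf{INSHC}$ by (iii), while $\mathbf{BPI}$ fails. The hard part is $\mathbf{INSHC}\nrightarrow\mathbf{IDFBI}$: since $\mathbf{INSHC}\to\mathbf{NAS}$, any witnessing model must satisfy $\mathbf{NAS}$, so no amorphous set is available to block compactifications in the easy way of Theorem \ref{comprem}(ii); one must instead arrange an infinite set $D$ for which $\mathbf{2}^{\omega}$ is \emph{not} a remainder of $\langle D,\mathcal{P}(D)\rangle$ while every infinite discrete space still admits \emph{some} non-scattered Hausdorff compactification. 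I expect this to be the main obstacle: it calls for a permutation model with $\mathbf{NAS}$ in which the compact Hausdorff spaces realizable as remainders of $D$ are controlled finely enough to include a non-scattered space yet to exclude $\mathbf{2}^{\omega}$, and the delicate point is to verify that $\mathbf{2}^{\omega}$ genuinely fails to be a remainder, rather than merely that $D$ is weakly Dedekind-finite, which by the phenomenon behind Theorem \ref{thm:3} does not by itself prevent dyadic filterbase infinity.
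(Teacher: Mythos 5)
Parts (i)--(iii) of your proposal are correct and essentially the paper's own route: Theorem \ref{comprem}(i) for (i), Theorem \ref{s1t19}(i) for (ii) (the paper leaves the second-countability of the metrizable separable compactification implicit; your explicit countable base fills that in harmlessly), and for (iii) the same chain via Theorem \ref{s1t19}(ii), the dense-in-itself remainder $\mathbf{2}^{\omega}$, and Theorem \ref{comprem}(ii) with the Alexandroff compactification of an amorphous set. Your proof of (iv) is correct but genuinely different from the paper's: you build the Stone space of $\mathcal{P}(D)$ and show the remainder is dense-in-itself by splitting any $A\in p$ into two infinite halves (using $\mathbf{NAS}$, which follows from $\mathbf{BPI}$ by Theorem \ref{BPIandNAS}) and extending the Fr\'echet filter of the half avoiding $p$; the paper instead assumes the remainder of $\beta\mathbf{D}$ has an isolated point $y_0$, separates it by disjoint open sets $U,V$, observes that $U\cup\{y_0\}$ is the \v Cech--Stone compactification of $U\cap D$, and concludes from Theorem \ref{comprem}(ii) that $U\cap D$ is amorphous, contradicting $\mathbf{NAS}$. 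Both arguments pivot on $\mathbf{BPI}\rightarrow\mathbf{NAS}$; yours is more self-contained, the paper's reuses its quoted compactification machinery (\cite[Theorem 3.27]{kw0}). Your argument for $\mathbf{INSHC}\nrightarrow\mathbf{BPI}$ (a model of $\mathbf{DC}\wedge\neg\mathbf{BPI}$, i.e.\ Feferman's model $\mathcal{M}2$ of \cite{hr}, plus (iii)) is exactly the paper's.

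The genuine gap is the first non-implication in (v), $\mathbf{INSHC}\nrightarrow\mathbf{IDFBI}$, which you explicitly leave as an unresolved obstacle. No new permutation model needs to be engineered: the paper closes this in one step by citing the result of \cite{ktw1} that $\mathbf{BPI}\wedge\neg\mathbf{IDFBI}$ has a $\mathbf{ZF}$-model, and then observing that, by (iv), $\mathbf{INSHC}$ holds in any such model, so $\mathbf{INSHC}\wedge\neg\mathbf{IDFBI}$ has a $\mathbf{ZF}$-model. The point you missed is that your own part (iv) is precisely the tool that converts the known consistency of $\mathbf{BPI}\wedge\neg\mathbf{IDFBI}$ into the desired separation: under $\mathbf{BPI}$, the space $\beta\mathbf{D}$ supplies a non-scattered Hausdorff compactification of \emph{every} infinite discrete space, even of one whose remainders never include $\mathbf{2}^{\omega}$, so the ``fine control of remainders'' you anticipate having to arrange is exactly what \cite{ktw1} already established. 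Without invoking (or reconstructing) that consistency result, your proof of (v) is incomplete, and this is the only substantive defect in the proposal.
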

\begin{proof}
To show that (i) holds, it suffices to apply Theorem \ref{comprem}. It follows directly from Theorem \ref{s1t19}(i) that (ii) holds. 

(iii) It is known that $\mathbf{DC}$ implies $\mathbf{IWDI}$ (see, e.eg., \cite[pages 326 and 339]{hr}). It has been noticed in \cite{ktw1} that, by  Theorem \ref{s1t19}(ii), $\mathbf{IWDI}$ implies $\mathbf{IDFBI}$. The implications $\mathbf{IDFBI}\rightarrow\mathbf{INSHC}\rightarrow\mathbf{NAS}$ can be deduced from Theorems \ref{s1t:main1} and \ref{comprem}(ii). 

To prove (iv), we assume $\mathbf{BPI}$ and consider any infinite set $D$. Let $\mathbf{D}=\langle D, \mathcal{P}(D)\rangle$. In the light of Theorem \ref{BPIeq}(i) and \cite[Theorem 3.27]{kw0}, it follows from $\mathbf{BPI}$ that there exists the \v Cech-Stone compactification $\beta\mathbf{D}$ of $\mathbf{D}$. Suppose that $\beta D\setminus D$ has an isolated point $y_0$. Then there exist disjoint open subsets $U, V$ of $\beta\mathbf{D}$ such that $y_0\in U$, $(\beta D\setminus D)\setminus\{y_0\}\subseteq  V$ and $U\cap V=\emptyset$. Then the subspace $U\cup\{y_0\}$ of $\beta\mathbf{D}$ is the \v Cech-Stone compactification of the subspace $U\cap D$ of $\mathbf{D}$. It follows from Theorem \ref{comprem}(ii) that $U\cap D$ is amorphous but this is impossible because $\mathbf{BPI}$ implies $\mathbf{NAS}$ by Theorem \ref{BPIandNAS}. The contradiction obtained shows that $\beta D\setminus D$ is dense-in-itself, so $\beta\mathbf{D}$ is not scattered. 

(v) It was shown in \cite{ktw1} that the conjunction $\mathbf{BPI}\wedge\neg\mathbf{IDFBI}$ has a $\mathbf{ZF}$-model. This, together, with (iv), implies that there is a model of $\mathbf{ZF}$ in which the conjunction $\mathbf{INSHC}\wedge\neg\mathbf{IDFBI}$ is true. Hence $\mathbf{INSHC}\nrightarrow\mathbf{IDFBI}$. 

To prove $\mathbf{INSHC}\nrightarrow\mathbf{BPI}$, let us use the Feferman's forcing model $\mathcal{M}2$ in \cite{hr}. It is known that $\mathbf{DC}\wedge\neg\mathbf{BPI}$ is true in $\mathcal{M}2$ (see \cite[page 148]{hr}). To complete the proof, it suffices to notice that it follows from (iii) that $\mathbf{INSHC}$ is also true in $\mathcal{M}2$.
\end{proof}

\begin{remark}
\label{s2r3}
($a$) We do not know if the conjunction $\mathbf{NAS}\wedge \neg\mathbf{INSHC}$ has a $\mathbf{ZF}$-model. 

($b$) It is worth noticing that it follows from    Theorem \ref{comprem}(ii) that it holds in $\mathbf{ZF}$ that $\mathbf{NAS}$ is equivalent to the following statements:
\begin{enumerate}
\item[(i)] Every infinite discrete space has a two-point Hausdorff compactification.
\item[(ii)] For every natural number $n$, every infinite discrete space has an $n$-point Hausdorff compactification. 
\end{enumerate}

($c$) In view of Theorem \ref{s2t2}(iii), it holds in $\mathbf{ZF}$ that $\mathbf{INSHC}$ follows from every form of \cite{hr} which implies $\mathbf{IWDI}$. In particular, the implication $\mathbf{CMC}\rightarrow\mathbf{INSHC}$ holds in $\mathbf{ZF}$ (see \cite[page 339]{hr}).

($d$) It is known that $\mathbf{BPI}$ implies $\mathbf{CAC}_{fin}$ (see, e.g., \cite[pages 325 and 354]{hr}). Since $\mathbf{BPI}$ implies $\mathbf{INSHC}$ by Theorem \ref{s2t2}(ii), it is worth noticing that the conjunction $\mathbf{INSHC}\wedge\neg\mathbf{CAC}_{fin}$ has a $\mathbf{ZF}$-model. To see this, let us notice that, in the Second Fraenkel Model $\mathcal{N}2$ in \cite{hr}, the conjunction $\mathbf{IWDI}\wedge\neg\mathbf{CAC}_{fin}$ is true (see \cite[page 179]{hr}). Since $\mathbf{IWDI}\wedge\neg\mathbf{CAC}_{fin}$ is a conjunction of two injectively boundable statements and it has a permutation model, it also has a $\mathbf{ZF}$-model by Pincus' transfer theorems. This, together with Theorem \ref{s2t2}(iii), implies that $\mathbf{INSHC}\wedge\neg\mathbf{CAC}_{fin}$ has a $\mathbf{ZF}$-model. This is also an alternative proof that $\mathbf{INSHC}\wedge\neg\mathbf{BPI}$ has a $\mathbf{ZF}$-model.
\end{remark}

It has been shown in the proof to Theorem \ref{s2t2} that $\mathbf{INSHC}\wedge\neg\mathbf{IDFBI}$ has a $\mathbf{ZF}$. Thus, by Theorem \ref{s2t2}(iii), $\mathbf{INSHC}\wedge\neg\mathbf{IWDI}$ has a $\mathbf{ZF}$-model. Let us recall the following open problems posed by us in \cite{ktw1}:
 
\begin{problem}
\label{s2prob:5}
\begin{enumerate}
\item[(i)] Is there a $\mathbf{ZF}$-model for $\mathbf{IDFBI}\wedge\neg\mathbf{IWDI}$? (See \cite[Problem (3) of Section 6]{ktw1}.)
\item[(ii)] Is there a model of $\mathbf{ZF}$ in which a weakly Dedekind-finite set can be dyadically filterbase infinite? (See \cite[Problem (6) of Section 6]{ktw1}.)
\end{enumerate}
\end{problem}

In \cite[the proof to Theorem 5.14]{ktw1}, a permutation model has been constructed in which there exists a weakly Dedekind-finite discrete space which has a remainder homeomorphic to $\mathbb{N}(\infty)$. Now, we are in a position to solve Problem \ref{s2prob:5}(ii) (that is, Problem (6) from Section 6 in \cite{ktw1}) by the following theorem:

\begin{theorem}
\label{thm:3}
It is relatively consistent with $\mathbf{ZF}$ that there exists a dyadically filterbase infinite set which is weakly Dedekind-finite. 
\end{theorem}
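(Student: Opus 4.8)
The plan is to build a suitable permutation model $\mathcal{N}$ of $\mathbf{ZFA}$ in which some infinite set $D$ (the set of atoms, or a definable subset of it) is simultaneously dyadically filterbase infinite and weakly Dedekind-finite, and then to transfer the resulting existential statement to a model of $\mathbf{ZF}$. Throughout I would use the purely set-theoretic reformulation of ``dyadically filterbase infinite'' from Remark \ref{s1r21}: the aim inside $\mathcal{N}$ is to produce a family $\mathcal{V}=\{\mathcal{V}_i^n:n\in\mathbb{N},\ i\in\{1,\dots,2^n\}\}$ of subsets of $D$ witnessing condition (c) of Theorem \ref{s1t:main1} for the discrete space $\langle D,\mathcal{P}(D)\rangle$, where ``$\cl_{\mathbf{X}}U$ non-compact'' reads ``$U$ infinite'' and ``compact'' reads ``finite.''

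For the model I would equip the atoms $D$ with a fixed dyadic partition tree $\{P_s:s\in 2^{<\omega}\}$ in the ground model, with $P_\emptyset=D$, $P_s=P_{s0}\sqcup P_{s1}$ and every $P_s$ infinite, so that the $P_s$ give exactly the level-$n$ partitions $D=\bigsqcup_{|s|=n}P_s$. The families $\mathcal{V}_i^n$ would be the almost-equality classes of these pieces: clause (ii) of condition (c) forces the members of each $\mathcal{V}_i^n$ to be pairwise almost equal, while clauses (iii)--(iv) are just the dyadic refinement and covering relations $P_s=P_{s0}\sqcup P_{s1}$ and $\bigsqcup_{|s|=n}P_s=D$, which hold on the nose. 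The group $G$ of permutations of $D$ and the normal filter (generated by pointwise stabilisers of finite subsets of $D$) must then be chosen so that (a) each $\mathcal{V}_i^n$, hence the whole indexed family $\mathcal{V}$, is symmetric, giving $\mathcal{V}\in\mathcal{N}$ and thus dyadic filterbase infiniteness of $D$ in $\mathcal{N}$; and (b) $\mathcal{P}(D)$ is Dedekind-finite in $\mathcal{N}$.

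The hard part is that (a) and (b) pull in opposite directions, and reconciling them is the crux of the construction. If $G$ preserves each piece $P_s$ exactly, then every $P_s$ lies in $\mathcal{N}$ and $s\mapsto P_s$ is an injection $2^{<\omega}\to\mathcal{P}(D)$ in $\mathcal{N}$, so $\mathcal{P}(D)$ is Dedekind-infinite; if instead $G$ acts by tree automorphisms swapping sibling subtrees, the $G$-invariant depth function survives and the level sets again yield an injection $\omega\to\mathcal{P}(D)$; and if $G$ is so large that it contains all finitary permutations of $D$, then the only infinite symmetric subsets of $D$ are the cofinite ones, each $\mathcal{V}_i^n$ becomes empty, and condition (c) fails. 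The resolution I would pursue is to design $G$ so that the Dedekind-infinite structure is pushed up one power-set level: each almost-class $\mathcal{V}_i^n$ is $G$-fixed (so $\mathcal{V}\in\mathcal{N}$ and $\mathcal{P}(D)/\mathrm{fin}$ is Dedekind-infinite), each $\mathcal{V}_i^n$ is non-empty because a suitable representative of the class of $P_i^n$ is symmetric (pinned by a finite support that is allowed to grow with $n$), yet the countable family $\{\mathcal{V}_i^n\}$ admits no choice function in $\mathcal{N}$, so no injection $\omega\to\mathcal{P}(D)$ can be read off from $\mathcal{V}$. I would then establish (b) directly by the support argument: fixing a finite $E\subseteq D$, I would show that the set of $\fix(E)$-invariant subsets of $D$ (equivalently, $\mathcal{P}$ of the $\fix(E)$-orbit space) is Dedekind-finite in $\mathcal{N}$, so that any purported injection $f\colon\omega\to\mathcal{P}(D)$, all of whose values share the single finite support of $f$, would have Dedekind-finite range and hence fail to be injective. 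Arranging the orbit structure of the stabilisers to be rich enough to contain the infinite pieces $P_i^n$ while admitting no $\omega$-enumeration of invariant sets is precisely where the new model is needed; a homogeneous, self-similar choice of $G$, for which each stabiliser's orbit space is itself weakly Dedekind-finite, is what I would aim for.

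Finally, I would transfer the result to $\mathbf{ZF}$. The sentence ``there is a dyadically filterbase infinite weakly Dedekind-finite set'' is existential with a matrix whose quantifiers range only over $D$, $\mathcal{P}(D)$, $\mathcal{P}(\mathcal{P}(D))$ and $\omega$, so it is boundable (and in any case injectively boundable). Hence, having produced a permutation model of it, I would invoke the Jech--Sochor First Embedding Theorem (or, equivalently, Pincus' transfer theorems), exactly as in the transfer apparatus recalled in Subsection \ref{s1.3}, to obtain a model of $\mathbf{ZF}$ validating the statement, which is what Theorem \ref{thm:3} asserts. I expect the entire difficulty to be concentrated in the simultaneous verification of (a) and (b) inside $\mathcal{N}$; the transfer step is routine once boundability is noted.
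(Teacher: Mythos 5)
Your overall strategy does coincide with the paper's: build a permutation model whose atoms form a dyadically filterbase infinite, weakly Dedekind-finite set; witness condition (c) of Theorem \ref{s1t:main1} for the discrete space of atoms by families $\mathcal{V}_i^n$ consisting of sets almost equal to the members of a dyadic partition tree; prove weak Dedekind-finiteness by a support argument; and transfer to $\mathbf{ZF}$ by Jech--Sochor, the statement being boundable. You also correctly isolate the central tension between making the $\mathcal{V}_i^n$ symmetric and non-empty while keeping $\mathcal{P}(D)$ Dedekind-finite. But the proposal stops exactly where the proof has to begin: the group $G$ and the normal filter are never actually constructed. You list the two competing requirements, observe that all the obvious candidate groups fail, and then state that a group reconciling them is ``what I would aim for.'' Since the whole content of Theorem \ref{thm:3} is the existence of such a model, this is a genuine gap, not a routine omission.

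Moreover, the one concrete structural commitment you make --- a normal filter generated by \emph{pointwise} stabilisers of finite sets of atoms --- points away from the resolution. The paper's key idea is to change what a support is: it first fixes a countable partition $\mathcal{A}=\{A_i:i\in\omega\}$ of the atoms into infinite pieces and builds the binary tree on $\mathcal{A}$ rather than on atoms; it then takes $\mathcal{Q}$ to be the set of all $\mathcal{G}$-images of the tree-node unions, and declares supports to be finite subsets $E\subseteq\mathcal{Q}$, with $\mathcal{G}_E$ the \emph{setwise} stabiliser of the members of $E$. With this choice, every perturbed node-union $Q\in\mathcal{Q}$ is automatically symmetric (it is supported by $\{Q\}$), so each $\mathcal{V}_i^n$ is non-empty in the model, and $\mathcal{V}$ is fixed pointwise by $\mathcal{G}$, hence lies in the model and is denumerable there; at the same time $\mathcal{G}_E$ remains rich: it contains every transposition of two atoms lying in the same cell of the finite Boolean algebra generated by $E$, and a pigeonhole argument on these finitely many cells shows that no denumerable disjoint family of non-empty subsets of the atoms can have support $E$, which is exactly weak Dedekind-finiteness. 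With pointwise atom-stabilisers and a finitary group, as in your setup, you face precisely the dilemma you yourself describe: a representative of the class of $P_s$ can be $\fix(E)$-invariant for a finite set $E$ of atoms only if the group is engineered so that finitely many atoms pin down an infinite, co-infinite set, and no such group is exhibited. So the missing idea is the paper's support mechanism --- finitely many \emph{sets} stabilised setwise, rather than finitely many atoms fixed pointwise --- and without it (or a genuine substitute) the proposal does not yield a proof, although the transfer step at the end is indeed routine once the model exists.
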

\begin{proof}
Let $\mathbf{\Phi}$ be the following statement: ``There exists a dyadically filterbase infinite set which is weakly Dedekind-finite''.

Since $\mathbf{\Phi}$ is a boundable statement, by the Jech--Sochor First Embedding Theorem (see \cite[Theorem 6.1]{Je}), it suffices to prove that $\mathbf{\Phi}$ has a permutation model.  To this end, let us modify the model constructed in \cite[the proof to Theorem 5.14]{ktw1} to get a new permutation model $\mathcal{N}$ in which $\mathbf{\Phi}$ is true. 

In what follows, for an arbitrary non-empty set $S$ and every permutation $\psi$ of $S$, we denote by $\supp(\psi)$ the support of $\psi$, that is, $\supp(\psi)=\{x\in S: \psi(x)\neq x\}$.

We start with a model $M$ of $\mathbf{ZFA}+\mathbf{AC}$ with a denumerable set $A$ of atoms such that $A$ has a denumerable partition $\mathcal{A}=\{A_{i}:i\in\omega\}$ into infinite sets. In $M$, we let 
$$\mathcal{B}=\{\mathcal{B}_{i}^{n}:n\in\mathbb{N},i\in\{1,2,\ldots, 2^{n}\}\}$$
be a family with the following two properties:

\begin{enumerate}
\item[($a$)] For $n=1$, $\{\mathcal{B}_{1}^{1},\mathcal{B}_{2}^{1}\}$ is a partition of $\mathcal{A}=\{A_{i}:i\in\omega\}$ into two infinite sets.
\item[($b$)] For every $n\in\mathbb{N}$ and for every $i\in\{1,2,\ldots,2^{n}\}$, $\{\mathcal{B}_{2i-1}^{n+1},\mathcal{B}_{2i}^{n+1}\}$ is a partition of $\mathcal{B}_{i}^{n}$ into two infinite sets. 
\end{enumerate}
We may thus view $\mathcal{B}$ as an infinite binary tree, having $\mathcal{A}$ as its root.

Let $\mathcal{G}$ be the group of all permutations $\phi$ of $A$ which satisfy the following two properties:

\begin{enumerate}
\item[($c$)] $\phi$ moves only finitely many elements of $A$.
\item[($d$)] $(\forall i\in\omega)(\exists j\in\omega)(\exists F\in [A_{j}]^{<\omega})(\phi[\supp(\phi\upharpoonright A_{i})]=F)$. 
\end{enumerate}

For every $n\in\mathbb{N}$ and for every $i\in\{1,2,\ldots,2^{n}\}$, we let  
$$\mathcal{Q}_{i}^{n}=\{\bigcup\{\phi(Z):Z\in\mathcal{B}_{i}^{n}\}:\phi\in \mathcal{G}\}.$$
We also let
$$\mathcal{Q}=\bigcup\{\mathcal{Q}_{i}^{n}:n\in\mathbb{N},i\in\{1,2,\ldots,2^{n}\}\}.$$
 
\noindent For every $E\in [\mathcal{Q}]^{<\omega}$, we let 
$$\mathcal{G}_{E}=\{\phi\in \mathcal{G}:{\forall Q\in E}(\phi(Q)=Q)\}.$$ Then $\mathcal{G}_E$ is a subgroup of $\mathcal{G}$. Furthermore, since for all $E,E^{\prime}\in [\mathcal{Q}]^{<\omega}$, $\mathcal{G}_{E\cup E^{\prime}}\subseteq\mathcal{G}_E\cap\mathcal{G}_{E^{\prime}}$, the collection $\{\mathcal{G}_E: E\in [\mathcal{Q}]^{<\omega}\}$ is a  base for a filter in the set of all subgroups of $\mathcal{G}$. Let $\mathcal{F}$ be the filter of subgroups of $\mathcal{G}$ generated by $\{\mathcal{G}_E: E\in [\mathcal{Q}]^{<\omega}\}$. To check that $\mathcal{F}$ is a normal filter on $\mathcal{G}$, we need to show that $\mathcal{F}$ has the following two properties:
\begin{equation}
\label{eq:normalf1}
\forall a\in A(\{\pi\in \mathcal{G}:\pi(a)=a\}\in\mathcal{F})
\end{equation}
and
\begin{equation}
\label{eq:normalf2}
(\forall\pi\in \mathcal{G})(\forall H\in\mathcal{F})(\pi H\pi^{-1}\in\mathcal{F}).
\end{equation}
\smallskip

To argue for (\ref{eq:normalf1}), let $a\in A$. Since $\mathcal{A}$ is a partition of $A$ in $M$, there exists a unique $i\in\omega$ such that $a\in A_{i}$. Since the set $\{\mathcal{B}_{1}^{1},\mathcal{B}_{2}^{1}\}$ is a partition of $\mathcal{A}$, either $A_{i}\in\mathcal{B}_{1}^{1}$ or $A_{i}\in\mathcal{B}_{2}^{1}$. Suppose that $A_{i}\in\mathcal{B}_{1}^{1}$ (the argument is similar if $A_{i}\in\mathcal{B}_{2}^{1}$). Pick an $A_{j}\in\mathcal{B}_{2}^{1}$ and an $a'\in A_{j}$. Let $\phi\in\mathcal{G}$ be the transposition  $(a,a')$ (i.e. $\phi$ interchanges $a$ and $a'$ and fixes all other atoms). Then $$\bigcup\{\phi(Z):Z\in\mathcal{B}_{2}^{1}\}=(\bigcup(\mathcal{B}_{2}^{1}\setminus\{A_{j}\}))\cup((A_j\setminus\{a^{\prime}\})\cup\{a\}).$$

Let $E=\{\bigcup\mathcal{B}_{1}^{1},\bigcup\{\phi(Z):Z\in\mathcal{B}_{2}^{1}\}\}$. Then, $E\in[\mathcal{Q}_{1}^{1}\cup\mathcal{Q}_{2}^{1}]^{<\omega}\subset[\mathcal{Q}]^{<\omega}$, so $E\in [\mathcal{Q}]^{<\omega}$. Furthermore, $\mathcal{G}_{E}\subseteq \{\pi\in \mathcal{G}:\pi(a)=a\}$. Indeed, let $\pi\in\mathcal{G}_E$. Towards a contradiction, assume that $\pi(a)=b$ for some $b\in A\setminus\{a\}$. Since $a\in\bigcup\mathcal{B}_{1}^{1}$ and $\pi$ fixes $\bigcup\mathcal{B}_{1}^{1}$, it follows that $b=\pi(a)\in\pi(\bigcup\mathcal{B}_{1}^{1})=\bigcup\mathcal{B}_{1}^{1}$. But then, since $\pi\in\mathcal{G}_{E}$, we have the following:
\begin{multline*}
a\in\bigcup\{\phi(Z):Z\in\mathcal{B}_{2}^{1}\}\}\to \pi(a)\in\pi(\bigcup\{\phi(Z):Z\in\mathcal{B}_{2}^{1}\}\})\\
\to b\in\bigcup\{\phi(Z):Z\in\mathcal{B}_{2}^{1}\}\}=(\bigcup(\mathcal{B}_{2}^{1}\setminus\{A_{j}\}))\cup((A_j\setminus\{a^{\prime}\})\cup\{a\}),
\end{multline*}
which is a contradiction. Therefore, (\ref{eq:normalf1}) holds.
\smallskip

To argue for (\ref{eq:normalf2}), let $\pi\in \mathcal{G}$ and $H\in\mathcal{F}$. There exists $E\in [\mathcal{Q}]^{<\omega}$ such that $\mathcal{G}_{E}\subseteq H$. By the definition of $\mathcal{Q}$, we have $\pi[E]\in[\mathcal{Q}]^{<\omega}$. We assert that $\mathcal{G}_{\pi[E]}\subseteq\pi H\pi^{-1}$. Let $\rho\in \mathcal{G}_{\pi[E]}$. For every $T\in E$ we have the following:
$$\rho(\pi T)=\pi T\rightarrow\pi^{-1}\rho\pi (T)=T;$$
\noindent Hence, since $\mathcal{G}_{E}\subseteq H$, we have:
$$\pi^{-1}\rho\pi\in \mathcal{G}_{E}\rightarrow\rho\in\pi \mathcal{G}_{E}\pi^{-1}\subseteq \pi H\pi^{-1}.$$ 
\noindent Therefore, $\rho\in\pi H\pi^{-1}$. Since $\rho$ is an arbitrary element of $\mathcal{G}_{\pi[E]}$, we conclude that $\mathcal{G}_{\pi[E]}\subseteq\pi H\pi^{-1}$. Thus, $\pi H\pi^{-1}\in\mathcal{F}$, so (\ref{eq:normalf2}) holds. This completes the proof that $\mathcal{F}$ is a normal filter on $\mathcal{G}$.

Let $\mathcal{N}$ be the permutation model determined by $M$, $\mathcal{G}$ and $\mathcal{F}$. We say that an element $x\in\mathcal{N}$ has \emph{support} $E\in [\mathcal{Q}]^{<\omega}$ if, for all $\phi\in \mathcal{G}_{E}$, $\phi(x)=x$. 

In $\mathcal{N}$, the set $(\mathcal{P}(A))^{\mathcal{N}}=\mathcal{P}(A)\cap\mathcal{N}$ is the power set of $A$. To prove that $A$ is dyadically filterbase infinite in $\mathcal{N}$, let  us show that, in $\mathcal{N}$, the discrete space $\langle A, (\mathcal{P}(A))^{\mathcal{N}}\rangle$ satisfies condition ($c$) of Theorem \ref{s1t:main1}. To this aim, for every $n\in\mathbb{N}$ and for every $i\in\{1,2,\ldots,2^{n}\}$, we let
$$\mathcal{V}_{i}^{n}=\left\{\bigcap\mathcal{R}:\mathcal{R}\in [\mathcal{Q}_{i}^{n}]^{<\omega}\setminus\{\emptyset\}\right\}\cup\left\{\bigcup\mathcal{C}: \mathcal{C}\in[\mathcal{Q}_{i}^{n}]^{<\omega}\setminus\{\emptyset\}\right\},$$
and we also let
$$\mathcal{V}=\{\mathcal{V}_{i}^{n}:n\in\mathbb{N},i\in\{1,2,\ldots,2^{n}\}\}.$$

\noindent We notice that any permutation of $A$ in $\mathcal{G}$ fixes $\mathcal{V}$ pointwise. Hence, $\mathcal{V}\in\mathcal{N}$ and, moreover, $\mathcal{V}$ is well-orderable in the model $\mathcal{N}$ (see \cite[page 47]{Je}). Since $\mathcal{V}$ is denumerable in the ground model $M$, it follows that $\mathcal{V}$ is also denumerable in $\mathcal{N}$. Furthermore, in view of the properties of the family $\mathcal{B}$ and of the elements of $\mathcal{G}$, and the construction of $\mathcal{V}$, it is easy to see that, if we put $X=A$ and $\mathbf{X}=\langle A, (\mathcal{P}(A))^{\mathcal{N}}\rangle$, then  $\mathcal{V}$ has properties  (i)-(iv) of condition $c$ of Theorem \ref{s1t:main1}. This, together with Theorem \ref{s1t:main1}, proves that $A$ is dyadically filterbase infinite in the model $\mathcal{N}$.
To complete the proof, it remains to show that $A$ is weakly Dedefind-finite in $\mathcal{N}$.

By way of contradiction, we assume that $A$ is weakly Dedekind-infinite in $\mathcal{N}$. Thus, it holds in $\mathcal{N}$ that there exists a denumerable disjoint family $\mathcal{U}=\{U_{n}:n\in\omega\}$ of $(\mathcal{P}(A))^{\mathcal{N}}$.  Let $E\in [\mathcal{Q}]^{<\omega}$ be a support of $U_{n}$ for all $n\in\omega$. It is not hard to verify now that there exist  a pair $k,m$ of distinct elements of $\omega$ and a pair $x,y$ of atoms, such that $x\in U_{k}$ and $y\in U_{m}$. The transposition $\psi=(x,y)$ from the group $\mathcal{G}$ is an element of $\mathcal{G}_{E}$. It follows that $\psi(U_{k})=U_{k}$, and so $y=\psi(x)\in\psi(U_{k})=U_{k}$. This is impossible because $y\in U_{m}$ and $U_{k}\cap U_{m}=\emptyset$. The contradiction obtained shows that $A$ is weakly Dedekind-finite in $\mathcal{N}$.
\end{proof}

The model constructed in \cite[the proof to Theorem 5.14]{ktw1} is different from that we have just introduced in the proof to Theorem \ref{thm:3}. By Theorem 5.15 of \cite{ktw1}, $\mathbf{NAS}$ is false in the model from \cite[the proof to Theorem 5.14]{ktw1}. 

Let $\mathcal{N}$ be the model from the proof to Theorem \ref{thm:3} above. We do not know if $\mathbf{NAS}$ holds  $\mathcal{N}$. Let us notice that, for every $i\in\omega$, no $E \in [\mathcal{Q}]^{<\omega}$ can support $A_i$ and, in consequence, $A_i\notin\mathcal{N}$. This is why we cannot mimic the proof to Theorem 5.15 in \cite{ktw1} to show that $\mathbf{NAS}$ fails in $\mathcal{N}$.  We also do not know if $\mathbf{INSHC}$ holds in $\mathcal{N}$.

\section{A metrization theorem for a class of quasi-metrizable spaces}
\label{s3}

The following theorem is of significant importance because of its consequences that will be shown in the forthcoming results.

\begin{theorem}
\label{s2t4}
$(\mathbf{ZF})$
Let $d$ be a quasi-metric on a set $X$ such that either $d^{-1}$ is precompact or the space $\mathbf{X}=\langle X, \tau(d)\rangle$ is limit point compact.  Then $\Iso_{\tau(d^{-1})}(X)$ is a cuf set. 
\end{theorem}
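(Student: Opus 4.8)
The plan is to produce a canonical, countable-to-finite decomposition of $\Iso_{\tau(d^{-1})}(X)$. For each $n\in\omega$ set
\[
I_n=\{x\in X: B_{d^{-1}}(x,2^{-n})=\{x\}\}=\{x\in X:(\forall y\neq x)\ d(y,x)\geq 2^{-n}\},
\]
the two descriptions agreeing because $B_{d^{-1}}(x,2^{-n})=\{y:d(y,x)<2^{-n}\}$. A point of $\langle X,\tau(d^{-1})\rangle$ is isolated exactly when one of the basic balls $B_{d^{-1}}(x,2^{-n})$ is a singleton, so $\Iso_{\tau(d^{-1})}(X)=\bigcup_{n\in\omega}I_n$, an increasing union indexed by the definable map $n\mapsto I_n$. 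Consequently it suffices to prove in $\mathbf{ZF}$ that each $I_n$ is finite: then $\Iso_{\tau(d^{-1})}(X)$ is literally the union of the range of a function from $\omega$ into $[X]^{<\omega}$, hence a cuf set, and no choice is invoked to realise the union.

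The single elementary fact driving both cases is a separation property of $I_n$: if $x,x'\in I_n$ are distinct, then the defining condition applied to $x$ (with $y=x'$) and to $x'$ (with $y=x$) yields $d(x',x)\geq 2^{-n}$ and $d(x,x')\geq 2^{-n}$. What this bounds below is the distance \emph{into} a point of $I_n$ from any other point. In the limit point compact case I argue by contradiction: were $I_n$ infinite, it would have an accumulation point $p\in X$ in $\mathbf{X}=\langle X,\tau(d)\rangle$; since forward balls are $\tau(d)$-open (a standard consequence of the triangle inequality), $B_d(p,2^{-n})$ is an open neighbourhood of $p$ and hence contains some $x\in I_n$ with $x\neq p$, giving $d(p,x)<2^{-n}$. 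But $x\in I_n$ and $p\neq x$ force $d(p,x)\geq 2^{-n}$, a contradiction. Hence $I_n$ is finite.

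In the precompact case the mechanism is the same, with the accumulation point replaced by a net point. For $\varepsilon=2^{-(n+1)}$, precompactness furnishes a finite set $D\subseteq X$; used in the orientation described below, this means every $x\in X$ has some $z\in D$ with $d(z,x)<\varepsilon$. Fixing $x\in I_n$ and such a $z$, if $z\neq x$ then the separation property applied to $x$ with $y=z$ gives $d(z,x)\geq 2^{-n}>\varepsilon$, contradicting $d(z,x)<\varepsilon$; thus $z=x$, so $x\in D$ and $I_n\subseteq D$ is finite. Combining the two cases, every $I_n$ is finite and $\Iso_{\tau(d^{-1})}(X)=\bigcup_{n}I_n$ is a cuf set.

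The delicate point, and the step I expect to demand the most care, is the bookkeeping of the asymmetry of $d$ in the precompact case. The isolation condition defining $I_n$ controls the incoming distances $d(\cdot,x)$, so the finite net must be deployed in the orientation that bounds $d(z,x)$ (from a net point into $x$) rather than $d(x,z)$; this is exactly the orientation encoded through the conjugate in Definition \ref{s1d5}(i)--(ii), and once it is pinned down the $2^{-n}$-separation forces each point of $I_n$ to be its own net point, with no recourse to the triangle inequality. By contrast, the limit point compact case needs only that forward balls are open. In both arguments the essential gain over na\"ive counting is that the hypotheses prohibit an infinite $2^{-n}$-separated family of points whose incoming distances can be simultaneously made small, which is precisely what would be required for $I_n$ to be infinite.
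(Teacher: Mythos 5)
Your proof is correct and takes essentially the same route as the paper's: the paper stratifies $\Iso_{\tau(d^{-1})}(X)$ into the sets $A_n$ of points whose minimal isolation radius is $\frac{1}{n}$ and proves each $A_n$ finite by exactly your two case arguments --- an accumulation point caught in a forward ball $B_d\left(x_0,\frac{1}{n_0}\right)$ in the limit point compact case, and a finite net in the precompact case. In particular, the orientation you single out as the delicate point (a finite $D$ such that every $x$ admits $z\in D$ with $d(z,x)<\varepsilon$) is precisely the one the paper's proof adopts when it writes $X=\bigcup_{x\in F}B_d\left(x,\frac{1}{n_0}\right)$, so the two arguments coincide; your only departures are cosmetic (the increasing sets $I_n$ in place of the partition $A_n$, and the direct inclusion $I_n\subseteq D$ in place of the paper's pigeonhole contradiction).
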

\begin{proof}
Let $D=\Iso_{\tau(d^{-1})}(X)$. For every $x\in D$, let
$$n_x=\min\left\{n\in\mathbb{N}: B_{d^{-1}}\left(x, \frac{1}{n}\right)=\{x\}\right\}.$$
For every $n\in\mathbb{N}$, let $A_n=\{x\in D: n=n_x\}$. Suppose that $n_0\in\mathbb{N}$ is such that $A_{n_0}$ is infinite. Let us show that there exist $x_0\in X$ and $x_1\in A_{n_0}$ such that $x_0\neq x_1$ and $x_1\in B_d\left(x_0,\frac{1}{n_0}\right)$. If $x_0, x_1$ are such  points, we notice that $d^{-1}(x_1, x_0)=d(x_0, x_1)<\frac{1}{n_0}$, so $x_0\in B_{d^{-1}}\left(x_1,\frac{1}{n_0}\right)$ which is impossible by the definition of $A_{n_0}$. 

If $\mathbf{X}$ is limit point compact, there exists an accumulation point of $A_{n_0}$ in $\mathbf{X}$. In this case, for a fixed accumulation point $x_0$ of $A_{n_0}$, we can fix $x_1\in A_{n_0}$ such that $x_1\neq x_0$ and $x_1\in B_d\left(x_0, \frac{1}{n_0}\right)$.  Assuming that $d^{-1}$ is precompact, we can fix a finite set $F\subseteq X$ such that $X=\bigcup\limits_{x\in F}B_d\left(x, \frac{1}{n_0}\right)$ and, since $A_{n_0}$ is infinite, we can fix $x_0\in F$ and $x_1\in A_{n_0}$ such that $x_1\neq x_0$ and $x_1\in B_d\left(x_0, \frac{1}{n_0}\right)$.  Hence, the assumption that $A_{n_0}$ is infinite leads to a contradiction. Therefore,  $D=\bigcup\limits_{n\in\mathbb{N}}A_n$ is a cuf set.
\end{proof}

\begin{corollary} 
\label{s2c5}
$(\mathbf{ZF})$ Let $\mathbf{X}=\langle X, d\rangle$ be a metric space which is either limit point compact or totally bounded. Then $\Iso(X)$ is a cuf set. Furthermore, if $\Iso(X)$ is infinite, then $X$ is quasi Dedekind-infinite.
\end{corollary}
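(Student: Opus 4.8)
The plan is to obtain the first assertion as a direct specialization of Theorem \ref{s2t4} and to obtain the second assertion from a purely set-theoretic fact about infinite cuf sets. Since $d$ is a metric, we have $d=d^{-1}=d^{\star}$ and hence $\tau(d)=\tau(d^{-1})$, so that $\Iso(X)=\Iso_{\tau(d^{-1})}(X)$. If $\mathbf{X}$ is limit point compact, then Theorem \ref{s2t4} applies verbatim. If instead $\mathbf{X}$ is totally bounded, then, by the equivalence of total boundedness and precompactness for metrics noted in Remark \ref{s1r6}, the metric $d$ is precompact; as $d^{-1}=d$, the hypothesis ``$d^{-1}$ is precompact'' of Theorem \ref{s2t4} is satisfied. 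In either case Theorem \ref{s2t4} yields that $\Iso(X)=\Iso_{\tau(d^{-1})}(X)$ is a cuf set.

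For the second assertion, I would set $D=\Iso(X)$ and assume $D$ is infinite. Being a cuf set, $D=\bigcup_{n\in\omega}F_n$ for some sequence $(F_n)_{n\in\omega}$ of finite subsets of $X$. I then pass to the nondecreasing chain of partial unions $G_n=\bigcup_{k\le n}F_k$, each of which lies in $[X]^{<\omega}$ and whose union is $D$. Because $D$ is infinite while each $G_n$ is finite, the chain $\{G_n:n\in\omega\}$ cannot stabilize, so it is infinite. The key observation is that, along a chain ordered by inclusion, distinct members have distinct cardinalities: if $G_m\neq G_n$ with $m<n$, then $G_m\subsetneq G_n$ and hence $|G_m|<|G_n|$. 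Consequently $C=\{|G_n|:n\in\omega\}$ is an infinite subset of $\omega$, and enumerating $C$ in increasing order as $c_0<c_1<\cdots$ and sending each $j\in\omega$ to the unique member of the chain of cardinality $c_j$ defines an injection of $\omega$ into $[X]^{<\omega}$. Thus $[X]^{<\omega}$ is Dedekind-infinite, which by Definition \ref{s1d2}(ii) is exactly the statement that $X$ is quasi Dedekind-infinite.

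The bulk of the argument is routine; the one point that requires care, and which I regard as the main obstacle, is that the injection $\omega\to[X]^{<\omega}$ must be produced in $\mathbf{ZF}$ without invoking any choice principle. This is precisely what the passage to nested partial unions buys us: the chain $\{G_n:n\in\omega\}$ is canonically well-ordered by cardinality, so the selection of a representative of each occurring cardinality is forced rather than arbitrary, and no simultaneous choice from the (possibly unstructured) finite sets $F_n$ is needed.
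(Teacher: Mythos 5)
Your proposal is correct and follows the paper's own route: the first assertion is exactly the specialization of Theorem \ref{s2t4} to metrics (using $d=d^{-1}$ and the precompact/totally bounded equivalence), and the second assertion is the elementary argument the paper dismisses as ``straightforward.'' Your nested-partial-unions construction of the injection $\omega\to[X]^{<\omega}$ is a valid, choice-free way to fill in that gap.
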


\begin{proof} That $\Iso(X)$ is a  cuf set follows from Theorem \ref{s2t4}. The second assertion is straightforward. 
\end{proof}

\begin{remark}
\label{s2r6}
Let $\mathbf{X}$ be a compact metrizable space. Then, using Proposition \ref{discommet}(ii), we may deduce that $\Iso(X)$ is a cuf set. Namely, suppose that $\Iso(X)$ is infinite. Let $Y=\text{cl}_{\mathbf{X}}(\Iso(X))$. Then $\mathbf{Y}$ is a metrizable compactification of the discrete space $\Iso(X)$, so $\Iso(X)$ is a cuf set by Proposition \ref{discommet}(ii). 
\end{remark}

Theorem \ref{qm1}($b$) improves the well-known result of $\mathbf{ZFC}$ that every compact Hausdorff quasi-metrizable space is metrizable (see Corollary  in \cite[Corollary in 7.1, p. 153]{fl} since it establishes that the weaker (than $\mathbf{AC}$) choice principle $\mathbf{CAC}$ suffices for the proof. An open problem posed in \cite{kw1} is whether it can be proved in $\mathbf{ZF}$ that every quasi-metrizable compact Hausdorff space is metrizable. Theorem \ref{qm1} is a partial solution to this problem. Now, we can shed a little more light on it by the following theorem:

\begin{theorem}
\label{s2t7}
$(\mathbf{ZF})$ Let $d$ be a strong quasi-metric on a set $X$ such that $\langle X, \tau(d)\rangle$ is a $T_3$-space. Then the following conditions are satisfied:
\enumerate
\item[(i)] if $\langle X, \tau(d^{-1})\rangle$ has a dense cuf set, then $\langle X, \tau(d)\rangle$ is metrizable;
\item[(ii)] if  $\langle X, \tau( d^{-1})\rangle$ is iso-dense and either $\langle X, \tau(d)\rangle$ is limit point compact or $d^{-1}$ is precompact, then the space $\langle X, \tau(d)\rangle$ is metrizable.
\end{theorem}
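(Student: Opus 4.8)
The plan is to reduce both parts to Theorem~\ref{Umt}(ii): since $\langle X,\tau(d)\rangle$ is a $T_3$-space, it suffices in each case to exhibit a cuf base for $\tau(d)$. I would first dispose of (ii) by deriving it from (i) together with Theorem~\ref{s2t4}. Indeed, if $\langle X,\tau(d^{-1})\rangle$ is iso-dense, then by definition $\Iso_{\tau(d^{-1})}(X)$ is dense in $\langle X,\tau(d^{-1})\rangle$; and under the extra hypothesis that $\langle X,\tau(d)\rangle$ is limit point compact or that $d^{-1}$ is precompact, Theorem~\ref{s2t4} guarantees that $\Iso_{\tau(d^{-1})}(X)$ is a cuf set. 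Thus $\langle X,\tau(d^{-1})\rangle$ has a dense cuf set, and (i) applies to yield the metrizability of $\langle X,\tau(d)\rangle$. So the entire content of the theorem lies in part (i).

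For (i), let $D=\bigcup_{n\in\omega}D_n$ be a dense cuf set of $\langle X,\tau(d^{-1})\rangle$, with each $D_n$ finite. I would take as candidate base the family
\[
\mathcal{B}=\left\{B_{d}\left(x,\tfrac{1}{2^{k}}\right): x\in D,\ k\in\omega\right\}.
\]
Every member is $\tau(d)$-open, and $\mathcal{B}$ is the image of the cuf set $D\times\omega$ under $(x,k)\mapsto B_d(x,\frac{1}{2^k})$, hence itself a cuf set (no choice is needed, as the witnessing sequence $(D_n)$ is given). It then remains only to verify that $\mathcal{B}$ is a base for $\tau(d)$ and to invoke Theorem~\ref{Umt}(ii).

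The verification of the base property is the heart of the matter, and the place where strongness is indispensable. Given $V\in\tau(d)$ and $y\in V$, fix $m$ with $B_d(y,\frac{1}{2^m})\subseteq V$. Since $d$ is strong, $B_d(y,\frac{1}{2^{m+1}})\in\tau(d)\subseteq\tau(d^{-1})$, so there is $j\ge m+1$ with $B_{d^{-1}}(y,\frac{1}{2^j})\subseteq B_d(y,\frac{1}{2^{m+1}})$. By density of $D$ in $\tau(d^{-1})$, pick $x\in D\cap B_{d^{-1}}(y,\frac{1}{2^j})$. The key point is that this single $x$ controls \emph{both} directions of the quasi-metric: from $x\in B_{d^{-1}}(y,\frac{1}{2^j})$ one gets $d(x,y)<\frac{1}{2^j}\le\frac{1}{2^{m+1}}$, while the inclusion $B_{d^{-1}}(y,\frac{1}{2^j})\subseteq B_d(y,\frac{1}{2^{m+1}})$ forces $d(y,x)<\frac{1}{2^{m+1}}$. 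The first inequality gives $y\in B_d(x,\frac{1}{2^{m+1}})$; the second, via $d(y,z)\le d(y,x)+d(x,z)$, gives $B_d(x,\frac{1}{2^{m+1}})\subseteq B_d(y,\frac{1}{2^m})\subseteq V$. Hence $\mathcal{B}$ is a base, and Theorem~\ref{Umt}(ii) completes (i).

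The main obstacle I anticipate is exactly the asymmetry of quasi-metrics: $\tau(d^{-1})$-density alone only produces a point $x\in D$ with $d(x,y)$ small, which suffices to place $y$ inside $B_d(x,\cdot)$ but not to contain that ball inside the prescribed neighbourhood $B_d(y,\cdot)$ --- for the latter one also needs $d(y,x)$ small. Strongness is precisely what closes this gap, by trapping a $d^{-1}$-ball about $y$ inside a $d$-ball about $y$, so that a $\tau(d^{-1})$-dense point lying in the former automatically lies in the latter. Once this is secured the rest is routine, and all steps are carried out within $\mathbf{ZF}$.
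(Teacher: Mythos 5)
Your proposal is correct and follows essentially the same route as the paper: part (ii) is reduced to part (i) via Theorem~\ref{s2t4} exactly as in the paper, and part (i) is proved by forming the cuf family of $d$-balls centred at the dense cuf set and invoking Theorem~\ref{Umt}(ii). The only difference is that the paper delegates the verification that this family is a base of $\tau(d)$ to an external reference (Theorem 4.6 of \cite{kw1}), whereas you carry out that strongness argument explicitly --- and correctly.
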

\begin{proof}
(i) Assume that $A=\bigcup\limits_{n\in\omega}A_n$ is a dense set in $\langle X, \tau(d^{-1})\rangle$ such that, for every $n\in\omega$, $A_n$ is a non-empty finite set. For $n,m\in\omega$, we define $\mathcal{B}_{m,n}=\left\{B_d\left(x,\frac{1}{m+1}\right): x\in A_n\right\}$. Since $d$ is strong, in much the same way, as in the proof to Theorem 4.6 in \cite{kw1}, one can show that  $\mathcal{B}=\bigcup\limits_{n,m\in\omega}\mathcal{B}_{n,m}$ is a base of $\langle X, \tau(d)\rangle$. Since $\mathcal{B}$ is a cuf set, the space $\langle X, \tau(d)\rangle$ is metrizable by Theorem \ref{Umt}(ii).\smallskip

(ii) Now, we assume that  $\langle X, \tau( d^{-1})\rangle$ is iso-dense and either $\langle X, \tau(d)\rangle$ is limit point compact or $d^{-1}$ is precompact. Let $E=\Iso_{\tau(d^{-1})}(X)$. Then $E$ is dense in $\langle X, \tau(d^{-1})\rangle$.  By Theorem \ref{s2t4}, the set $E$ is a cuf set. Hence, to conclude the proof, it suffices to apply (i). 
\end{proof}

\begin{corollary} 
\label{s2c8}
$(\mathbf{ZF})$
Let $\langle X, d\rangle$ be a compact Hausdorff quasi-metric space such that $\langle X, \tau(d^{-1})\rangle$ is iso-dense. Then $\langle X, \tau(d)\rangle$ is metrizable.
\end{corollary}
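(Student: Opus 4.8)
The plan is to deduce the statement directly from Theorem \ref{s2t7}(ii) by verifying that all of its hypotheses are met by $\langle X, d\rangle$. That theorem requires a strong quasi-metric $d$ on $X$ with $\langle X, \tau(d)\rangle$ a $T_3$-space, together with the assumptions that $\langle X, \tau(d^{-1})\rangle$ be iso-dense and that either $\langle X, \tau(d)\rangle$ be limit point compact or $d^{-1}$ be precompact. The iso-density of $\langle X, \tau(d^{-1})\rangle$ is given in the hypothesis, so three things remain to be checked: that $d$ is strong, that $\langle X, \tau(d)\rangle$ is $T_3$, and that (say) $\langle X, \tau(d)\rangle$ is limit point compact.

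The first of these is immediate: since $\langle X, \tau(d)\rangle$ is compact, Proposition \ref{s1p12} tells us that $d$ is automatically strong, so no work is needed there. For the other two, I would observe that both follow from compactness alone, and that each can be established in $\mathbf{ZF}$ without any appeal to choice. That a compact Hausdorff space is regular — hence $T_3$ — is standard: given a point $x$ and a closed (thus compact) set $C$ with $x\notin C$, the family of all open $V$ with $x\notin\cl_{\mathbf{X}}(V)$ is a \emph{canonically} defined open cover of $C$ (Hausdorffness supplies, for each $y\in C$, such a $V$ containing $y$), and any finite subcover $V_1,\dots,V_n$ produces the open neighbourhood $X\setminus\bigcup_{i=1}^{n}\cl_{\mathbf{X}}(V_i)$ of $x$ disjoint from $C$, using only that closure commutes with finite unions. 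Similarly, a compact space is limit point compact in $\mathbf{ZF}$: were an infinite $A\subseteq X$ to have no accumulation point, the family of all open sets meeting $A$ in at most one point would be a choice-free open cover of $X$, and any finite subcover would force $A$ to be finite, a contradiction.

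With $d$ strong, $\langle X, \tau(d)\rangle$ a compact Hausdorff (hence $T_3$) space that is limit point compact, and $\langle X, \tau(d^{-1})\rangle$ iso-dense, Theorem \ref{s2t7}(ii) applies and yields that $\langle X, \tau(d)\rangle$ is metrizable, which is exactly the desired conclusion. I do not expect a genuine obstacle here; the proof is essentially the remark that compactness furnishes, for free, every hypothesis of Theorem \ref{s2t7}(ii) that is not already assumed. The only point requiring real care is that the two background facts above be argued via canonically definable covers rather than selected ones, so that the whole derivation stays inside $\mathbf{ZF}$.
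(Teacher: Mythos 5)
Your proposal is correct and follows exactly the paper's own route: the paper also deduces the corollary immediately from Proposition \ref{s1p12} (compactness makes $d$ strong) together with Theorem \ref{s2t7}(ii), leaving the routine $\mathbf{ZF}$-verifications that compact Hausdorff spaces are $T_3$ and limit point compact implicit. Your spelling out of those two choice-free facts via canonically defined covers is accurate but not a departure from the paper's argument.
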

\begin{proof}
This follows immediately from Proposition \ref{s1p12} and Theorem \ref{s2t7}.
\end{proof}

\begin{remark}
\label{s2r9}
In Corollary \ref{s2c8}, we cannot omit the assumption that $\langle X, \tau(d)\rangle$ is Hausdorff. Indeed, there is a quasi-metric $d$ on $\omega$ such that $\tau(d)$ is the cofinite topology on $\omega$ and $\tau(d^{-1})$ is the discrete topology on $\omega$ (see \cite{kw1}). Then $d$ is a strong quasi-metric such that $\langle \omega, \tau(d)\rangle$ is a compact $T_1$-space which is not metrizable. 
\end{remark}

\begin{theorem}
\label{s2t10}
$(\mathbf{ZF})$
Let $\langle X, d\rangle$ be an iso-dense metric space such that either $d$ is totally bounded or $\langle X, \tau(d)\rangle$ is limit point compact. Then $\langle X, \tau(d)\rangle$ has a cuf base and can be embedded in a metrizable Tychonoff cube.
\end{theorem}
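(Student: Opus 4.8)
The plan is to reduce the statement to Theorem~\ref{Umt}(ii), which asserts that every $T_3$-space with a cuf base is metrizable and embeddable in a metrizable Tychonoff cube. Since $\langle X, d\rangle$ is a metric space, the induced topological space $\langle X, \tau(d)\rangle$ is automatically $T_3$ (indeed metrizable). So the entire content of the theorem is to produce a \emph{cuf base} for $\langle X, \tau(d)\rangle$; once that is done, Theorem~\ref{Umt}(ii) delivers both conclusions at once.

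First I would observe that for a metric $d$ we have $d=d^{-1}=d^\star$, so the hypotheses ``$d$ totally bounded'' and ``$\langle X, \tau(d)\rangle$ limit point compact'' are exactly the two hypotheses of Theorem~\ref{s2t4} (with $d$ in place of $d^{-1}$, which is harmless since they coincide). Hence Theorem~\ref{s2t4} applies and tells us that $\Iso_{\tau(d)}(X) = \Iso(X)$ is a cuf set; write $\Iso(X)=\bigcup_{n\in\omega}A_n$ with each $A_n$ finite. Because $\mathbf{X}$ is iso-dense, $\Iso(X)$ is dense in $\langle X, \tau(d)\rangle$. Now I am in the situation of Theorem~\ref{s2t7}(i): a dense cuf set in the space. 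The natural construction is to mimic the base built there, setting, for $n,m\in\omega$,
$$\mathcal{B}_{m,n}=\left\{B_d\left(x,\tfrac{1}{m+1}\right): x\in A_n\right\}, \qquad \mathcal{B}=\bigcup_{n,m\in\omega}\mathcal{B}_{m,n}.$$
Each $\mathcal{B}_{m,n}$ is finite (indexed by the finite set $A_n$), so $\mathcal{B}$ is a countable union of finite sets, i.e.\ a cuf set.

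The remaining step is to verify that $\mathcal{B}$ is genuinely a base for $\tau(d)$. This is where density of $\Iso(X)$ and the triangle inequality do the work: given an open $U$ and a point $y\in U$, pick $k$ with $B_d(y,\tfrac{1}{k})\subseteq U$, then use density of $\Iso(X)$ to find a center $x\in A_n$ (some $n$) within $d$-distance $\tfrac{1}{2k}$ of $y$, and choose the radius $\tfrac{1}{m+1}$ small enough that $y\in B_d(x,\tfrac{1}{m+1})\subseteq B_d(y,\tfrac1k)\subseteq U$. For a symmetric metric this is the standard second-countability-from-dense-set argument, and it is cleaner than the $\tau(d)/\tau(d^{-1})$ bookkeeping of Theorem~\ref{s2t7}(i) because there is no conjugate topology to track. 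I expect the main (though still routine) obstacle to be getting these radius choices to fit together uniformly without invoking any choice beyond what is already packaged in the cuf enumeration of $\Iso(X)$; since each $A_n$ is finite and $\mathcal{B}$ is defined outright from the given decomposition, no further choice principle is needed. With $\mathcal{B}$ confirmed as a cuf base, Theorem~\ref{Umt}(ii) yields that $\langle X, \tau(d)\rangle$ is metrizable (which we already knew) and embeds in a metrizable Tychonoff cube, completing the proof. Alternatively, and even more economically, one could simply note that $d$ is a strong quasi-metric (trivially, being a metric) and apply Theorem~\ref{s2t7}(ii) directly, since its hypotheses---$\langle X,\tau(d^{-1})\rangle$ iso-dense and either limit point compactness or precompactness of $d^{-1}$---all hold verbatim when $d=d^{-1}$; but spelling out the cuf base makes the Tychonoff-cube embedding transparent.
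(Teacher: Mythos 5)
Your main argument is correct and is essentially the paper's own proof: the paper likewise obtains the cuf base by appealing to the \emph{proof} (not the statement) of Theorem~\ref{s2t7} --- that is, Theorem~\ref{s2t4} giving that $\Iso(X)$ is a cuf set, then the dense-cuf-set ball construction --- and concludes with Theorem~\ref{Umt}(ii). One caution about your closing ``more economical'' alternative: invoking Theorem~\ref{s2t7}(ii) as a black box yields only metrizability, which is vacuous for a metric space and gives neither the cuf base nor the Tychonoff-cube embedding, so that shortcut does not by itself prove the theorem.
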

\begin{proof}
It follows from the proof to Theorem \ref{s2t7} that $\langle X, \tau(d)\rangle$ has a cuf base. Since $\langle X, \tau(d)\rangle$ is a $T_3$-space, to conclude the proof, it suffices to apply Theorem \ref{Umt}(ii).
\end{proof}

\section{$\mathbf{CAC}_{fin}$ via iso-dense metrizable spaces}
\label{s4}

It is known that it holds in $\mathbf{ZFC}$ that every iso-dense compact metrizable space is separable and every scattered compact metrizable space is countable. In this section, we show that the situation of compact iso-dense metrizable spaces and compact scattered metrizable spaces in $\mathbf{ZF}$ is different than in $\mathbf{ZFC}$. To begin, let us recall the following lemma proved in \cite{ktw0}:

\begin{lemma}
\label{s3l01}
$(\mathbf{ZF})$. Let $\mathbf{X}$ be a non-empty metrizable space and let $\mathcal{B}$ be a base of $\mathbf{X}$. Then $\mathbf{X}$ embeds in $[0, 1]^{\mathcal{B}\times\mathcal{B}}$. 
\end{lemma}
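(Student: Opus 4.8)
The plan is to produce a canonical family of continuous $[0,1]$-valued functions indexed by $\mathcal{B}\times\mathcal{B}$ that separates points from closed sets, and then to invoke the (choice-free) diagonal embedding criterion. First I would fix a metric $d$ on $X$ with $\tau(d)$ equal to the topology of $\mathbf{X}$; this is a single existential instantiation, legitimate in $\mathbf{ZF}$, and no further choices enter the argument. For a non-empty $A\subseteq X$ the function $x\mapsto d(x,A)=\inf\{d(x,a):a\in A\}$ is well defined (an infimum of a non-empty set of non-negative reals) and $1$-Lipschitz, hence continuous in $\mathbf{ZF}$. For each pair $(U,V)\in\mathcal{B}\times\mathcal{B}$ I would set
\[ f_{(U,V)}(x)=\min\{1,\,d(x,X\setminus V)\}, \]
with the convention $f_{(U,V)}\equiv 1$ when $V=X$ (so that $X\setminus V=\emptyset$). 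Here the first coordinate is a harmless dummy whose only role is to make the index set exactly $\mathcal{B}\times\mathcal{B}$; each $f_{(U,V)}$ is continuous and takes values in $[0,1]$.

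The key separation property is this: for every $x\in X$ and every closed $C\subseteq X$ with $x\notin C$ there is an index $(U,V)$ with $f_{(U,V)}(x)>0$ while $f_{(U,V)}\upharpoonright C\equiv 0$. Since $\mathbf{X}$ is non-empty and $\mathcal{B}$ is a base, $\mathcal{B}\neq\emptyset$, so some $U\in\mathcal{B}$ exists; and since $X\setminus C$ is open and contains $x$, the base supplies $V\in\mathcal{B}$ with $x\in V\subseteq X\setminus C$. As $V$ is open and $x\in V$ we have $d(x,X\setminus V)>0$, whereas every $c\in C$ lies in $X\setminus V$, so $d(c,X\setminus V)=0$. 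This $(U,V)$ works; in particular, taking $C=\{y\}$ for $y\neq x$ (using that metrizable spaces are $T_1$), the family also separates points.

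Finally I would apply the standard evaluation-map criterion, whose proof is entirely constructive and hence valid in $\mathbf{ZF}$. The diagonal map $e\colon X\to[0,1]^{\mathcal{B}\times\mathcal{B}}$, $e(x)=(f_{(U,V)}(x))_{(U,V)}$, is continuous because each coordinate is (the universal property of Tychonoff products needs no choice), and it is injective by point separation. To see that $e$ is open onto its image, given an open $W\subseteq X$ and $x\in W$, I separate $x$ from the closed set $X\setminus W$ to obtain an index $p=(U,V)$ with $a:=f_p(x)>0$ and $f_p\upharpoonright(X\setminus W)\equiv 0$; then $\pi_p^{-1}((a/2,1])$ is open in the cube, contains $e(x)$, and its trace on $e(X)$ lies in $e(W)$, since $f_p(y)>a/2>0$ forces $y\notin X\setminus W$. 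Hence $e(W)$ is relatively open, so $e$ is a homeomorphic embedding. I expect the only delicate point to be checking that $e$ is open onto its image while scrupulously avoiding choice; but as the displayed neighborhood shows, the required index is produced by a single existential appeal to the base, so no choice is invoked. (Equivalently, one may first embed $\mathbf{X}$ into $[0,1]^{\mathcal{B}}$ via $V\mapsto \min\{1,d(\cdot,X\setminus V)\}$ and then compose with the canonical embedding $[0,1]^{\mathcal{B}}\hookrightarrow[0,1]^{\mathcal{B}\times\mathcal{B}}$ induced by the injection $V\mapsto(V,V)$.)
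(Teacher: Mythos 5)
Your proof is correct. One thing to note at the outset: the paper itself does not prove this lemma --- it is quoted from \cite{ktw0} --- so the comparison is necessarily with the standard argument that the statement's $\mathcal{B}\times\mathcal{B}$ indexing suggests. In that classical route one uses \emph{both} coordinates essentially: for pairs $\langle U,V\rangle\in\mathcal{B}\times\mathcal{B}$ with $\cl_{\mathbf{X}}(U)\subseteq V$ one forms the canonical functions such as
$x\mapsto d(x, X\setminus V)/\bigl(d(x,U)+d(x,X\setminus V)\bigr)$,
which equal $1$ on $\cl_{\mathbf{X}}(U)$ and $0$ outside $V$, and one separates $x$ from a closed set $C$ by choosing $V$ with $x\in V\subseteq X\setminus C$ and then $U$ with $x\in U\subseteq\cl_{\mathbf{X}}(U)\subseteq V$. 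Your proof dispenses with the pairs entirely: the observation that in a metric space the single function $x\mapsto\min\{1, d(x, X\setminus V)\}$ already separates $x$ from $C$ (positive at $x$ because $X\setminus V$ is closed and misses $x$; identically $0$ on $C\subseteq X\setminus V$) means you are really embedding $\mathbf{X}$ into $[0,1]^{\mathcal{B}}$, a formally stronger conclusion, and then padding the index set. All the $\mathbf{ZF}$-sensitive points are handled properly: the metric is a single existential instantiation; the coordinate functions are defined uniformly by a formula, so no choice enters; and openness of $e$ onto its image is a $\forall\exists$ statement whose witnesses need not be chosen uniformly (equivalently, $e(W)$ equals the trace on $e(X)$ of the union of \emph{all} cube-open sets whose trace lies in $e(W)$). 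The only quibble is terminological: the map $[0,1]^{\mathcal{B}}\hookrightarrow[0,1]^{\mathcal{B}\times\mathcal{B}}$ you invoke at the end is the one induced by the surjection $\langle U,V\rangle\mapsto V$ (a retraction of $V\mapsto\langle V,V\rangle$), not by the injection $V\mapsto\langle V,V\rangle$ itself; but that is exactly what your dummy-first-coordinate definition implements, so nothing in the argument is affected.
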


If $\mathbf{X}=\langle X, d\rangle$ is a metric space and $\mathbf{Y}$ is a topological space, then we say that $\mathbf{X}$ embeds in  $\mathbf{Y}$ if the space $\langle X, \tau(d)\rangle$ embeds in $\mathbf{Y}$.

The following theorem is a characterization of $\mathbf{CAC}_{fin}$ in terms of iso-dense (limit point) compact metrizable spaces and in terms of iso-dense totally bounded metric spaces.

\begin{theorem}
\label{s3t02}
 $(\mathbf{ZF})$
The following conditions are all equivalent:
\begin{enumerate}
\item[(i)] $\mathbf{CAC}_{fin}$;
\item[(ii)] for every iso-dense metric space $\mathbf{X}$, if $\mathbf{X}$ is either limit point compact or totally bounded, then $\mathbf{X}$ is separable;
\item[(iii)] for every iso-dense metric space $\mathbf{X}$, if $\mathbf{X}$ is either limit point compact or totally bounded, then $\mathbf{X}$ embeds in the Hilbert cube $[0,1]^{\mathbb{N}}$;
\item[(iv)] for every iso-dense metric space $\mathbf{X}$, if $\mathbf{X}$ is either limit point compact or totally bounded, then $|\Iso(X)|\leq|\mathbb{R}|$;
\item[(v)] for every iso-dense metric space $\mathbf{X}$, if $\mathbf{X}$ is either limit point compact or totally bounded, then the set $\Iso(X)$ is countable.
\end{enumerate}
In (ii)-(v), the term ``iso-dense''  can be replaced with ``scattered''.
\end{theorem}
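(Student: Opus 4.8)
The plan is to prove the chain of equivalences by first establishing the implication $(i)\Rightarrow(ii)$, then a cycle through the remaining conditions, and finally $(v)\Rightarrow(i)$ to close the loop. The key structural observation, supplied by Theorem \ref{s2t10}, is that any iso-dense metric space $\mathbf{X}$ which is either totally bounded or limit point compact has a cuf base, and hence $\Iso(X)$ is a cuf set by Corollary \ref{s2c5}. This reduces every one of the topological conditions (ii)--(v) to a question about countability of a cuf set, and it is precisely $\mathbf{CAC}_{fin}$ that converts a countable union of finite sets into a countable set. So the real content is the interplay between ``cuf'' and ``countable''.

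For $(i)\Rightarrow(ii)$: assuming $\mathbf{CAC}_{fin}$, write $\Iso(X)=\bigcup_{n\in\omega}A_n$ with each $A_n$ finite, as produced by Corollary \ref{s2c5}. Under $\mathbf{CAC}_{fin}$ a cuf set is countable (one well-orders each finite $A_n$ via a choice function on the finitely many linear orderings, or more directly enumerates $\bigcup_n A_n$), so $\Iso(X)$ is countable and dense in $\mathbf{X}$, giving separability; this is (ii). The implications $(ii)\Rightarrow(iii)$ and $(iii)\Rightarrow(iv)$ follow from the embedding machinery: a separable metric space has a countable base, so by Lemma \ref{s3l01} it embeds in $[0,1]^{\mathcal{B}\times\mathcal{B}}=[0,1]^{\mathbb{N}}$, and an embedding into the Hilbert cube forces $|\Iso(X)|\leq|[0,1]^{\mathbb{N}}|=|\mathbb{R}|$. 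The step $(iv)\Rightarrow(v)$ uses that $\Iso(X)$ is a cuf set with $|\Iso(X)|\leq|\mathbb{R}|$; a cuf set of reals is countable in $\mathbf{ZF}$ (a cuf set injecting into $\mathbb{R}$ is countable, since each finite piece can be canonically enumerated using the order of $\mathbb{R}$ without any choice), which yields (v).

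The crucial and most delicate step is $(v)\Rightarrow(i)$, where one must manufacture, from a given counterexample to $\mathbf{CAC}_{fin}$, an iso-dense limit point compact (and totally bounded) metric space whose isolated-point set fails to be countable. Given a denumerable family $\{F_n:n\in\omega\}$ of non-empty finite sets with no choice function, the plan is to build a metric space whose isolated points are exactly $\bigcup_n F_n$, arranged so that the $n$-th ``cluster'' $F_n$ sits at distance roughly $\frac{1}{2^n}$ from a single limit point (for instance, take disjoint copies accumulating to a distinguished point, or a convergent-sequence-of-clusters construction that renders the space limit point compact and totally bounded). One verifies directly that $\Iso(X)=\bigcup_n F_n$ is dense (so $\mathbf{X}$ is iso-dense) and that the space is limit point compact and totally bounded by the chosen metric. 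If this space were separable, or equivalently if $\Iso(X)$ were countable, a countable enumeration of $\bigcup_n F_n$ would let one extract a choice function on $\{F_n:n\in\omega\}$ (pick in each $F_n$ the element of least index in the enumeration), contradicting the failure of $\mathbf{CAC}_{fin}$. The main obstacle is designing the metric so that both total boundedness and limit point compactness hold simultaneously without smuggling in a choice function during the construction of the space itself; this requires the clusters $F_n$ and their metric positions to be defined by a canonical, choice-free rule from the given family.

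Finally, for the parenthetical remark that ``iso-dense'' may be replaced by ``scattered'': since every scattered space is iso-dense by Proposition \ref{s2p1}, the scattered versions of (ii)--(v) are formally weaker statements, so the implications from $(i)$ to each scattered condition are immediate from the iso-dense case. For the reverse direction one checks that the counterexample space constructed in $(v)\Rightarrow(i)$ can be taken to be scattered --- a disjoint union of finite clusters converging to a single limit point has Cantor--Bendixson rank $2$, hence is scattered --- so the same space witnesses the failure of each scattered condition, closing the equivalence for the scattered variants as well.
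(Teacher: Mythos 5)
Your proposal is correct and takes essentially the same route as the paper: the cycle $(i)\Rightarrow(ii)\Rightarrow(iii)\Rightarrow(iv)\Rightarrow(v)\Rightarrow(i)$ using Corollary \ref{s2c5}, Lemma \ref{s3l01}, the $\mathbf{ZF}$-fact that cuf subsets of $\mathbb{R}$ are countable, and, for $(v)\Rightarrow(i)$, the one-point compactification of an uncountable discrete cuf space arising from a failure of $\mathbf{CAC}_{fin}$, which is scattered and so also settles the parenthetical variant. The only difference is that the ``main obstacle'' you flag (building the metric choice-free) is exactly what the paper dispatches by citing Proposition \ref{discommet}(i); alternatively, the canonical ultrametric $d(x,y)=\max\{2^{-n},2^{-m}\}$ for distinct $x\in F_n$, $y\in F_m$ and $d(x,\infty)=2^{-n}$ closes it with no choice whatsoever.
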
 
\begin{proof}
Let $\mathbf{X}=\langle X, d\rangle$ be an iso-dense (respectively, scattered) metric space such that $\mathbf{X}$ is either limit point compact or totally bounded.  By Corollary \ref{s2c5}, the set $\Iso(X)$ is a cuf set. Hence, it follows from $\mathbf{CAC}_{fin}$ that $\Iso(X)$ is countable. In consequence, (i) implies (ii). Since every separable metrizable space is second-countable, it follows from Lemma  \ref{s3l01} that it is true in $\mathbf{ZF}$ that (ii) implies (iii). 

Now, to show that (iii) implies (iv),  suppose that $\langle X, \tau(d)\rangle$ is homeomorphic to a subspace of $[0, 1]^{\mathbb{N}}$. Then $\Iso(X)$ is equipotent to a subset of $[0, 1]^{\mathbb{N}}$. Since it holds in $\mathbf{ZF}$ that $[0, 1]^{\mathbb{N}}$ and $\mathbb{R}$ are equipotent, we deduce that $\Iso(X)$ is equipotent to a subset of $\mathbb{R}$. Hence (iii) implies (iv). 

It is obvious that, in $\mathbf{ZF}$, every cuf subset of $\mathbb{R}$ is countable as a countable union of finite well-ordered sets. Hence, if $\Iso(X)$ is equipotent to a subset of $\mathbb{R}$, then $\Iso(X)$ is countable as a set equipotent to a cuf set contained in $\mathbb{R}$. This shows that (iv) implies (v). 

Finally, suppose that $\mathbf{CAC}_{fin}$ fails. Then there exists an uncountable discrete cuf space $\mathbf{D}$. It follows from Proposition \ref{discommet}(i) that the Alexandroff compactification $\mathbf{D}(\infty)$ of $\mathbf{D}$ is metrizable. Since $\mathbf{D}(\infty)$ is an iso-dense compact mertizable space whose set of all isolated points $\mathbf{D}(\infty)$ is uncountable, (v) fails if $\mathbf{CAC}_{fin}$ fails. Hence (v) implies (i).
\end{proof}

\begin{theorem}
\label{s3t04}
$(\mathbf{ZF})$ 
\begin{enumerate}
\item[(i)]For every totally bounded metric space $\mathbf{X}$, the Cantor-Bendixson rank $|\mathbf{X}|_{\CB}$ of  $\mathbf{X}$ is a countable ordinal.
\item[(ii)] Every totally bounded scattered metric space is a cuf space.
\item[(iii)] Every totally bounded scattered metric space has a cuf base. 
\end{enumerate}
\end{theorem}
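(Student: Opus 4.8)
The plan is to isolate one combinatorial lemma that drives all three parts simultaneously, thereby sidestepping the usual $\mathbf{ZFC}$ route (total boundedness $\Rightarrow$ separability $\Rightarrow$ second countability $\Rightarrow$ hereditary Lindel\"ofness $\Rightarrow$ countable length of any decreasing chain of closed sets), which is unavailable here: in $\mathbf{ZF}$ a totally bounded metric space need not be separable, and, worse, any pigeonhole bound resting on the regularity of $\omega_1$ may fail since $\omega_1$ can be singular. For $n\in\omega$ and $Y\subseteq X$, set $I_n(Y)=\{y\in Y:B_d(y,\frac{1}{2^n})\cap Y=\{y\}\}$, the set of points that are ``$\frac{1}{2^n}$-isolated'' in $Y$; note $\Iso(Y)=\bigcup_{n\in\omega}I_n(Y)$. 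First I would record the elementary fact that, since $\mathbf{X}=\langle X,d\rangle$ is totally bounded, every $\frac{1}{2^n}$-separated subset of $X$ is finite: cover $X$ by finitely many balls of radius $\frac{1}{2^{n+1}}$, and observe that each such ball meets a $\frac{1}{2^n}$-separated set in at most one point.

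The key lemma is then: for every $n$, the set $J_n=\bigcup_{\alpha\in\On}I_n\bigl(X^{(\alpha)}\bigr)$ is $\frac{1}{2^n}$-separated, and hence finite. To verify the separation, take distinct $x\in I_n(X^{(\alpha)})$ and $x'\in I_n(X^{(\beta)})$ with $\alpha\leq\beta$; since the Cantor--Bendixson derivatives decrease, $x'\in X^{(\beta)}\subseteq X^{(\alpha)}$, and because $B_d(x,\frac{1}{2^n})\cap X^{(\alpha)}=\{x\}$ while $x'\in X^{(\alpha)}\setminus\{x\}$, one gets $d(x,x')\geq\frac{1}{2^n}$. The virtue of this lemma is that it controls every ordinal stage at once, with no choice of nets along the way and no appeal to properties of $\omega_1$.

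Given the lemma, part (ii) is immediate: for a scattered space Proposition \ref{s1p10} supplies an ordinal $\rho$ with $X^{(\rho)}=\emptyset$, and a routine minimal-counterexample argument on the decreasing transfinite sequence shows $X=\bigcup_{\alpha<\rho}\Iso(X^{(\alpha)})=\bigcup_{n\in\omega}J_n$, a countable union of finite sets. For part (i), I would note that the \emph{active} stages $\{\alpha:\Iso(X^{(\alpha)})\neq\emptyset\}$ coincide with $\{\alpha:\alpha<|\mathbf{X}|_{\CB}\}$, and since $\Iso(X^{(\alpha)})\neq\emptyset$ exactly when $I_n(X^{(\alpha)})\neq\emptyset$ for some $n$, this set equals $\bigcup_{n\in\omega}\{\alpha:I_n(X^{(\alpha)})\neq\emptyset\}$; by the lemma each inner set is finite, so the active stages form a countable union of finite sets of ordinals, which is countable in $\mathbf{ZF}$ via the canonical increasing enumeration of each finite block. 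Hence $|\mathbf{X}|_{\CB}$, being order-isomorphic to a countable set of ordinals, is a countable ordinal. Part (iii) follows from part (ii): a scattered space is iso-dense by Proposition \ref{s2p1} and $\mathbf{X}$ is totally bounded, so Theorem \ref{s2t10} yields a cuf base directly; alternatively, from $X=\bigcup_{n}J_n$ the family $\{B_d(x,\frac{1}{2^m}):n,m\in\omega,\ x\in J_n\}$ is a base indexed by $\omega\times\omega$ with finite blocks, hence a cuf base.

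I expect the main obstacle to be stating the cross-stage separation in the key lemma so that no choice silently enters in lining up the ordinal stages, and making sure that ``a countable union of finite sets of ordinals is countable'' is invoked in its genuinely choice-free form; both points are settled by the explicit increasing enumerations used above, so the argument should go through verbatim in $\mathbf{ZF}$.
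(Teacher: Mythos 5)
Your proposal is correct and rests on exactly the same mechanism as the paper's own proof: the paper's sets $A_{\gamma,n}$ are your $I_n(X^{(\gamma)})$ (with exact rather than cumulative isolation scale), and the paper's contradiction in part (i) --- two points of $\bigcup_{\gamma}A_{\gamma,n_0}$ landing in a single ball of radius $\frac{1}{3n_0}$ yet being $\frac{1}{n_0}$-apart by the cross-stage isolation property --- is precisely your separation lemma for $J_n$ combined with the finite-cover pigeonhole. The only differences are organizational: you isolate the cross-stage finiteness as an explicit lemma and obtain (ii) directly from $X=\bigcup_{n}J_n$ rather than, as the paper does, routing (ii) through the countability of $|\mathbf{X}|_{\CB}$ established in (i); both arrangements are valid in $\mathbf{ZF}$.
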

\begin{proof}
Let $\mathbf{X}=\langle X, d\rangle$ be an infinite totally bounded  metric space. Let $\alpha=|\mathbf{X}|_{\text{CB}}$. Then 
$$X=\bigcup\limits_{\gamma\in\alpha}\Iso(X^{(\gamma)})\cup X^{(\alpha)}.$$
For every $\gamma\in\alpha$ and every $x\in\Iso(X^{(\gamma)})$, let 
$$n(x,\gamma)=\min\left\{n\in\mathbb{N}: B_d\left(x,\frac{1}{n}\right)\cap X^{(\gamma)}=\{x\}\right\}.$$
 For every $\gamma\in\alpha$ and every $n\in\mathbb{N}$, let 
$$A_{\gamma, n}=\{x\in \Iso(X^{(\gamma)}): n(x,\gamma)=n\}.$$
We have already shown in the proof to Theorem \ref{s2t4} that, for every $\gamma\in\alpha$ and every $n\in\mathbb{N}$, the set $A_{\gamma, n}$ is finite and $\Iso(X^{(\gamma)})=\bigcup\limits_{n\in\mathbb{N}}A_{\gamma, n}$.\smallskip

(i) Suppose that $\alpha$  is uncountable. For every $n\in\mathbb{N}$, let 
$$B_n=\{\gamma\in\alpha: A_{\gamma, n}\neq\emptyset\}.$$

\noindent Since $\alpha$ is supposed to be uncountable, there exists $n_0\in\mathbb{N}$ such that $B_{n_0}$ is infinite. We fix such an $n_0$ and put
$$\mathcal{U}=\left\{B_d\left(x, \frac{1}{3n_0}\right): x\in X\right\}.$$
By the total boundedness of $d$, the open cover $\mathcal{U}$ of $\mathbf{X}$ has a finite subcover. Hence, there exists a non-empty finite subset $F$ of $X$ such that $X=\bigcup\limits_{x\in F}B_d\left(x,\frac{1}{3n_0}\right)$. Since $B_{n_0}$ is infinite, there exist $\gamma_1,\gamma_2\in B_{n_0}$ and elements $x_0\in F$,  $x_1\in A_{\gamma_1, n_0}\cap B_d\left(x_0,\frac{1}{3n_0}\right)$ and $x_2\in A_{\gamma_2, n_0}\cap B_d\left(x_0, \frac{1}{3n_0}\right)$, such that $x_1\neq x_2$. We may assume that $\gamma_1\leq\gamma_2$. Then $X^{(\gamma_2)}\subseteq X^{(\gamma_1)}$ and it follows from the definition of $A_{\gamma_1, n_0}$ that $d(x_1, x_2)\geq\frac{1}{n_0}$. On the other hand, since $x_1, x_2\in B_d(x_0,\frac{1}{3n_0})$, we have $d(x_1, x_2)\leq\frac{2}{3n_0}<\frac{1}{n_0}$. The contradiction obtained proves that  $\alpha$ is countable.\smallskip

(ii) Now, suppose that the space $\mathbf{X}$ is also scattered. Then it follows from  Proposition \ref{s1p10} that $X^{(\alpha)}=\emptyset$. Hence $X=\bigcup\{A_{\gamma, n}: \gamma\in\alpha\text{ and } n\in\mathbb{N}\}$. Since $\alpha$ is countable, the set $\alpha\times\mathbb{N}$ is countable. This implies that the family $\{A_{\gamma, n}: \gamma\in\alpha\text{ and }n\in\mathbb{N}\}$ is also countable. We have already shown that, for every $\gamma\in\alpha$ and every $n\in\mathbb{N}$, the set $A_{\gamma, n}$ is finite. Hence $X$ is a cuf set.

It follows immediately from (ii) and Theorem \ref{s2t10} that (iii) holds.
\end{proof}

\begin{theorem}
\label{s3c05}
$(\mathbf{ZF})$ The following conditions are all equivalent:
\begin{enumerate}
\item[(i)] $\mathbf{CAC}_{fin}$;
 \item[(ii)] every totally bounded scattered metric space is countable;
 \item[(iii)] every compact metrizable scattered space is countable;
 \item[(iv)] every totally bounded, complete scattered metric space is compact.
 \end{enumerate}
\end{theorem}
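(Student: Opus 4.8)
The plan is to prove the cycle (i)$\Rightarrow$(ii)$\Rightarrow$(iii)$\Rightarrow$(i) together with (i)$\Leftrightarrow$(iv). For (i)$\Rightarrow$(ii), let $\mathbf{X}=\langle X,d\rangle$ be totally bounded and scattered; by Theorem \ref{s3t04}(ii) the set $X$ is a cuf set, and $\mathbf{CAC}_{fin}$ turns any cuf set into a countable one (choose a linear order of each finite block and concatenate), so $\mathbf{X}$ is countable. For (ii)$\Rightarrow$(iii), I would note that a compact metric space is totally bounded in $\mathbf{ZF}$, since a finite subcover of $\{B_d(x,\varepsilon):x\in X\}$ is a finite $\varepsilon$-net; hence a compact metrizable scattered space, equipped with any compatible metric, is a totally bounded scattered metric space, and is countable by (ii). For (iii)$\Rightarrow$(i), suppose $\mathbf{CAC}_{fin}$ fails; then there is an uncountable cuf set $D$, and by Proposition \ref{discommet}(i) the Alexandroff compactification $\mathbf{D}(\infty)$ of $\mathbf{D}=\langle D,\mathcal{P}(D)\rangle$ is metrizable. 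As $\infty$ is the only non-isolated point, $\mathbf{D}(\infty)$ is compact and scattered with $\Iso(D(\infty))=D$ uncountable, contradicting (iii).

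For (i)$\Rightarrow$(iv), assume $\mathbf{CAC}_{fin}$ and let $\mathbf{X}=\langle X,d\rangle$ be totally bounded, complete and scattered. As in (i)$\Rightarrow$(ii), $X$ is countable, so I fix an enumeration $X=\{x_k:k\in\omega\}$ and prove $\mathbf{X}$ compact directly. Given an open cover with no finite subcover, I would build a nested sequence of non-empty closed balls $C_0\supseteq C_1\supseteq\cdots$ with $\delta_d(C_n)\to 0$, none of which admits a finite subcover, at each stage covering $C_n$ by finitely many small balls (total boundedness) and selecting, by \emph{least index} in the fixed enumeration, a ball whose trace on $C_n$ has no finite subcover; such a ball exists, for otherwise a finite union of finite subcovers would cover $C_n$. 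The enumeration makes every selection canonical, so no choice beyond the one already used to countabilise $X$ is needed; the centres form a Cauchy sequence whose limit $x$, supplied by completeness, lies in every $C_n$, and any cover member containing $x$ eventually contains some $C_n$, a contradiction.

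The substantial work is (iv)$\Rightarrow$(i), which I would argue contrapositively by manufacturing, from the failure of $\mathbf{CAC}_{fin}$, a complete totally bounded scattered non-compact metric space. The key preliminary is that the \emph{partial} form of $\mathbf{CAC}_{fin}$ (every denumerable family of non-empty finite sets has a choice function on some infinite subfamily) is already equivalent to $\mathbf{CAC}_{fin}$: given non-empty finite sets $\{F_n:n\in\omega\}$, apply the partial form to the finite sets $G_m=\prod_{n\le m}F_n$ to obtain an infinite $A\subseteq\omega$ and $h$ with $h(m)\in G_m$ for $m\in A$, and then put $f(n)=h(\min(A\setminus n))(n)$ to recover a full choice function. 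Consequently, when $\mathbf{CAC}_{fin}$ fails there is a disjoint family $\{H_m:m\in\omega\}$ of non-empty finite sets admitting no choice function on any infinite subfamily; its union $D$ is then an uncountable cuf set with no countably infinite subset, i.e.\ $D$ is Dedekind-finite (a countably infinite $C\subseteq D$ would meet infinitely many $H_m$ and yield, by taking $C$-least elements, a choice function on an infinite subfamily).

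On this $D$ I would place the ultrametric $d(x,y)=2^{-\min(\ell(x),\ell(y))}$ for $x\neq y$, where $\ell(x)$ is the unique $m$ with $x\in H_m$. A direct check shows that $d$ induces the discrete topology, so $\langle D,d\rangle$ is scattered and, being infinite, non-compact, and that $d$ is totally bounded, since for each $K$ the finite set $\bigcup_{m\le K}H_m$ together with a single point of $H_{K+1}$ is a $2^{-K}$-net. Finally, because $D$ is Dedekind-finite, every sequence in $D$ has finite range, so every $d$-Cauchy sequence is eventually constant and hence convergent; thus $\langle D,d\rangle$ is complete and witnesses the failure of (iv). The delicate point, and the one I expect to require the most care, is exactly this passage from the bare failure of $\mathbf{CAC}_{fin}$ to a \emph{Dedekind-finite} uncountable cuf set: without Dedekind-finiteness a countably infinite subset threading infinitely many $H_m$ would produce a non-constant Cauchy sequence and destroy completeness of the discrete ultrametric, so the reduction through the partial choice principle is what makes the construction robust in $\mathbf{ZF}$.
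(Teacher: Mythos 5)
Your proof is correct, and at the strategic level it mirrors the paper's: the positive directions rest on Theorem \ref{s3t04}(ii) (totally bounded scattered $\Rightarrow$ cuf) together with the fact that $\mathbf{CAC}_{fin}$ makes every cuf set countable, and the negative directions are obtained by contraposition, refuting (iii) with $\mathbf{D}(\infty)$ over an uncountable cuf set and refuting (ii)/(iv) with a discrete, totally bounded, complete, non-compact metric on a Dedekind-finite union of finite sets. The genuine differences lie in how you reach the two key ingredients. First, where the paper simply cites \cite[Form 10 M]{hr} for the fact that the failure of $\mathbf{CAC}_{fin}$ yields a denumerable disjoint family of non-empty finite sets with Dedekind-finite union, you prove it: the product trick $G_m=\prod_{n\le m}F_n$ shows that partial $\mathbf{CAC}_{fin}$ implies full $\mathbf{CAC}_{fin}$, and absence of partial choice functions then forces Dedekind-finiteness of the union (your argument needs the harmless tagging $H_m\times\{m\}$ to secure disjointness, which you assert but do not mention). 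Second, where the paper obtains its totally bounded complete metric on $D$ by restricting a compatible metric of the metrizable compactification $\mathbf{D}(\infty)$ --- thereby invoking Proposition \ref{discommet}(i) a second time and inheriting total boundedness from compactness --- you write down the explicit ultrametric $d(x,y)=2^{-\min(\ell(x),\ell(y))}$ and verify discreteness and total boundedness by hand; this is more elementary and confines the use of Proposition \ref{discommet}(i) to the (iii)$\Rightarrow$(i) direction. Third, you supply the canonical-selection proof that a countable, totally bounded, complete metric space is compact, a statement the paper asserts as provable in $\mathbf{ZF}$ without giving the argument; your use of a fixed enumeration to make all selections definable is exactly what makes that assertion choice-free. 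In short, the paper's proof is shorter because it leans on cited results, while yours is self-contained and makes explicit why each step avoids choice; both establish the same equivalences by the same underlying constructions.
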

\begin{proof}
Since $\mathbf{CAC}_{fin}$ implies that all cuf sets are countable, it follows from Theorem \ref{s3t04} that (i) implies (ii) and (iii). It is provable in $\mathbf{ZF}$ that every totally bounded, complete countable metric space is compact. Hence, in the light of Theorem \ref{s3t04}, (i) implies (iv).  

Assume that $\mathbf{CAC}_{fin}$ is false. Then there exists a family $\{A_n: n\in\omega\}$ of non-empty pairwise disjoint finite sets such that the set $D=\bigcup\limits_{n\in\omega}A_n$ is Dedekind-finite (see \cite[ Form \textbf{[10M]}]{hr}). Let $\mathbf{D}=\langle D, \mathcal{P}(D)\rangle$. By Proposition \ref{discommet}(i), the space $\mathbf{D}(\infty)$ is metrizable. Let $d$ be any metric which induces the topology of $\mathbf{D}(\infty)$. Since $\mathbf{D}(\infty)$ is compact, the metric $d$ is totally bounded.  Moreover, $\mathbf{D}(\infty)$ is scattered but uncountable. For $\rho=d\upharpoonright D\times D$, the metric space $\langle D, \rho\rangle$ is also totally bounded. Since $D$ is Dedekind-finite and $\langle D, \rho\rangle$ is discrete, the metric $\rho$ is complete. Clearly, $\langle D, \rho\rangle$ is not compact. All this taken together completes the proof.  
\end{proof}

\begin{theorem}
\label{s3p06}
$(\mathbf{ZF})$ Every compact metrizable cuf space is scattered. In particular, every compact metrizable countable space is scattered.
\end{theorem}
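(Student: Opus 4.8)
The plan is to argue by contradiction and pass to the perfect kernel. Suppose $\mathbf{X}=\langle X,d\rangle$ is a compact metrizable cuf space that is \emph{not} scattered. By Proposition \ref{s1p10} there is no ordinal $\alpha$ with $X^{(\alpha)}=\emptyset$; since the Cantor--Bendixson derivatives form a decreasing sequence of closed sets, it stabilizes, and for $\alpha=|\mathbf{X}|_{\CB}$ we have $P:=X^{(\alpha)}=X^{(\alpha+1)}=X^{(\alpha)}\setminus\Iso(X^{(\alpha)})$, so that $P\neq\emptyset$ and $\Iso(P)=\emptyset$; that is, $P$ is dense-in-itself. Being closed in the compact space $\mathbf{X}$, the subspace $P$ is compact; it is metrizable as a subspace of a metrizable space; and, as a subset of the cuf set $X$, it is itself a cuf set (if $X=\bigcup_n F_n$ with each $F_n$ finite, then $P=\bigcup_n(F_n\cap P)$).

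First I would record the cardinal obstruction that drives the contradiction: no cuf set admits an injection from $\mathbb{R}$. Indeed, if $f\colon\mathbb{R}\to\bigcup_n F_n$ is injective with each $F_n$ finite, then $\mathbb{R}=\bigcup_n f^{-1}(F_n)$ is a countable union of finite subsets of $\mathbb{R}$; since every finite subset of $\mathbb{R}$ carries its canonical increasing enumeration, concatenating these enumerations well-orders $\mathbb{R}$ in order type at most $\omega$, so $\mathbb{R}$ is countable, contradicting Cantor's theorem (which is a theorem of $\mathbf{ZF}$). Consequently, to reach a contradiction it suffices to prove $|\mathbb{R}|\leq|P|$.

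To obtain $|\mathbb{R}|\leq|P|$ I would try to show that $P$ is second-countable and then invoke Theorem \ref{s1t11}: a non-empty dense-in-itself compact second-countable Hausdorff space has size at least $|\mathbb{R}|$. The purpose of passing through second-countability is that a countable base makes the classical Cantor-scheme construction inside $P$ canonical, hence choice-free, which is exactly what Theorem \ref{s1t11} encapsulates in $\mathbf{ZF}$. The ``in particular'' clause is immediate this way: if $X$ is countable, fix an enumeration $X=\{x_n:n\in\omega\}$; then $\{B_d(x_n,1/m):n\in\omega,\ m\in\mathbb{N}\}$ is a countable base, so the countable dense-in-itself compact space $P$ would have size at least $|\mathbb{R}|$ by Theorem \ref{s1t11}, which is absurd. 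Hence $P=\emptyset$ and $X$ is scattered.

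The genuine difficulty is the general cuf case, where $P$ need not be second-countable a priori: a compact metrizable cuf space can fail to be second-countable (for uncountable cuf $D$ the Alexandroff compactification $\mathbf{D}(\infty)$ is compact, metrizable, iso-dense, yet not second-countable), so the dense-in-itself hypothesis must be used in an essential way. I would exploit a cuf witness $P=\bigcup_n G_n$, with $G_n$ finite and increasing, to make the needed selections definable without choice. Compactness yields, for each $m$, a least $n$ for which $G_n$ is a finite $(1/m)$-net of $P$, giving a canonical sequence of finite nets and a canonical rank function $\iota\colon P\to\omega$ with finite fibres. Using these one can attempt either to canonically well-order $P$ (whence $P$, being well-orderable and cuf, is countable, and one finishes as in the countable case) or to run a Cantor scheme in which each split of a compact piece $C$ is performed by thresholding a canonical continuous function built from the sorted tuples of distances from the lowest-rank points of $C$ to the already-selected finite sets. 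The main obstacle, and the step I expect to require the dense-in-itself hypothesis together with compactness, is breaking the symmetric ties among points of equal rank so that all these selections are genuinely canonical; once the construction is carried out it produces either a countable base of $P$ (feeding Theorem \ref{s1t11}) or an injection $\mathbf{2}^{\omega}\hookrightarrow P$ outright, and in either case $|\mathbb{R}|\leq|P|$ contradicts that $P$ is a cuf set.
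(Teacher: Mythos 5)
Your reduction to the perfect kernel $P$, your cardinal obstruction (no cuf set admits an injection from $\mathbb{R}$, since an injection $f\colon\mathbb{R}\to\bigcup_n F_n$ would exhibit $\mathbb{R}$ as a countable union of finite sets of reals, each canonically enumerable, hence countable), and your treatment of the countable case via Theorem \ref{s1t11} are all correct. But the general cuf case --- which is the whole content of the theorem --- is left with an acknowledged hole: you say the construction ``can attempt'' to canonically well-order $P$ or to run a tie-breaking Cantor scheme, and you explicitly flag ``breaking the symmetric ties among points of equal rank'' as the unresolved main obstacle. That obstacle is genuine and cannot be overcome as stated: canonically well-ordering $P$ would require choosing, for each finite piece $G_n$, an enumeration of it, which is precisely a $\mathbf{CAC}_{fin}$-type choice unavailable in $\mathbf{ZF}$; and an injection $\mathbf{2}^{\omega}\hookrightarrow P$ itself would require selecting one point from each of continuum-many disjoint compact sets, again a choice you have no way to make canonically. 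So both exits of your proposed dichotomy are blocked, and neither second-countability of $P$ nor $|\mathbb{R}|\leq|P|$ is established.

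The missing idea --- and the paper's key move --- is that you never need to select \emph{points} or break ties at all, because the injection need not land in $P$: it suffices to inject $\mathbf{2}^{\omega}$ into a cuf set, and the set $\bigcup\{\mathcal{P}(X_n):n\in\omega\}$ of subsets of the finite partition pieces is itself cuf. Concretely, fix a partition $\{X_n:n\in\omega\}$ of the (assumed dense-in-itself, compact) space into non-empty finite sets and build a Cantor scheme $\{B_s:s\in S\}$ in which the split of $B_{s\upharpoonright n}$ is performed by the two \emph{finite sets} $Y_{s,0}=X_{n_s}\cap B_{s\upharpoonright n}$ and $Y_{s,1}=X_{k_s}\cap(B_{s\upharpoonright n}\setminus X_{n_s})$, where $n_s$ and $k_s$ are the \emph{least} indices of partition pieces meeting the relevant sets (dense-in-itselfness guarantees $k_s$ exists); taking $B_s$ to be the union of balls of radius $d(Y_{s,0},Y_{s,1})/3$ about the points of $Y_{s,s(n)}$ makes the two children have disjoint closures, and every step is canonical because one only ever picks a finite set by a minimal index, never a point. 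Compactness gives $M_f=\bigcap_n\cl_{\mathbf{X}}(B_{f\upharpoonright n})\neq\emptyset$ for each branch $f$, and the map $F(f)=X_{m_f}\cap M_f$ (with $m_f$ the least index of a piece meeting $M_f$) is an injection of $\mathbf{2}^{\omega}$ into $\bigcup_n\mathcal{P}(X_n)$ by the disjointness of the closures. This contradicts your own cardinal obstruction, with no tie-breaking ever needed. In short: your skeleton is the right shape, but the decisive device is to let finite sets, not points, be the objects chosen and counted.
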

\begin{proof}
Our first step is to prove that every non-empty compact metrizable cuf space has an isolated point. To this aim, suppose that $\mathbf{X}=\langle X, d\rangle$ is a compact metric space such that the set $X$ is a non-empty cuf set. Towards a contradiction, suppose that $\mathbf{X}$ is dense-in-itself. We fix a partition $\{X_n: n\in\omega\}$ of $\mathbf{X}$ into non-empty finite sets.

Let $S=\bigcup\{\{0,1\}^n: n\in\mathbb{N}\}$. For $n\in\mathbb{N}$, $s\in\{0,1\}^n$ and $t\in\{0,1\}$, let $s\smallfrown t\in\{0,1\}^{n+1}$ be defined by: $s\smallfrown t(i)=s(i)$ for every $i\in n$, and $s\smallfrown t(n)=t$. Using ideas from \cite{CP}, let us define by induction (with respect to $n$) a family $\{B_s: s\in S\}$ such that, for every $s\in S$, the following conditions are satisfied:
\begin{enumerate}
\item[(1)]  $B_s$ is a non-empty open subset of $\mathbf{X}$;
\item[(2)] for every $t\in\{0,1\}$, $B_{s\smallfrown t}\subseteq B_s$;
\item[(3)]  $\cl_{\mathbf{X}}(B_{s\smallfrown 0})\cap\cl_{\mathbf{X}}(B_{s\smallfrown 1})=\emptyset$.
\end{enumerate}

To start the induction, for $n=1=\{0\}$ and  every $s\in\{0,1\}^1$, we define:
$$B_s=\bigcup\left\{ B_d\left(x, \frac{d(X_0, X_1)}{3}\right): x\in X_{s(0)}\right\}.$$
\noindent Now, suppose that $n\in\mathbb{N}$ is such that, for every $s\in\bigcup\limits_{i=1}^n\{0,1\}^i$, we have defined a non-empty open subset $B_s$ of $\mathbf{X}$.  For an arbitrary $s\in\{0,1\}^{n+1}$, we consider the set $B_{s\upharpoonright n}$.  We put $n_s=\min\{m\in\omega: X_m\cap B_{s\upharpoonright n}\neq\emptyset\}$. Since $\mathbf{X}$ is  dense-in-itself, we have $\emptyset\neq\{m\in\omega: X_m\cap (B_{s\upharpoonright n}\setminus X_{n_s})\neq\emptyset\}$, so we can define $k_s=\min\{m\in\omega: X_m\cap (B_{s\upharpoonright n}\setminus X_{n_s})\neq\emptyset\}$. Now, we put $Y_{s,0}=X_{n_s}\cap B_{s\upharpoonright n}$ and $Y_{s,1}=X_{k_s}\cap (B_{s\upharpoonright n}\setminus X_{n_s})$. We define
$$B_s=\bigcup\left\{B_d\left(y,\frac{d(Y_{s,0}, Y_{s,1})}{3}\right): y\in Y_{s,s(n)}\right\}.$$ 
In this way, we have inductively defined the required family $\{B_s: s\in S\}$. 

We notice that it follows from (2) that, for every $f\in \{0,1\}^{\omega}$ and $n\in\mathbb{N}$,  $\emptyset\neq \cl_{\mathbf{X}}(B_{f\upharpoonright (n+1)})\subseteq \cl_{\mathbf{X}}(B_{f\upharpoonright n})$. Thus, by the compactness of $\mathbf{X}$, for every $f\in\{0,1\}^{\omega}$, the set 
$$M_f=\bigcap\left\{\cl_{\mathbf{X}}(B_{f\upharpoonright n}): n\in\mathbb{N}\right\}$$
is non-empty. For every $f\in\{0, 1\}^{\omega}$, let $m_f=\min\{n\in\omega: X_n\cap M_f\neq\emptyset\}$. We define a mapping $F:\{0, 1\}^{\omega}\to \bigcup\{\mathcal{P}(X_n): n\in\omega\}$ by putting:
 $$F(f)=X_{m_f}\cap M_f\text{ for every } f\in\{0,1\}^{\omega}.$$

 It follows from (3) that $F$ is an injection. In consequence, the set $\{0, 1\}^{\omega}$ is equipotent to a subset of the cuf set $\bigcup\{\mathcal{P}(X_n): n\in\omega\}$. But this is impossible because $\{0, 1\}^{\omega}$, being equipotent to $\mathbb{R}$, is not a cuf set. The contradiction obtained shows that every non-empty compact metrizable cuf space has an isolated point.

To complete the proof, we let $\mathbf{X}$ be any compact metrizable cuf space. We have proved that every non-empty compact subspace of $\mathbf{X}$ has an isolated point. Hence $\mathbf{X}$ cannot contain non-empty dense-in-itself subspaces. This implies that $\mathbf{X}$ is scattered.  
\end{proof}

Now, we can give the following modification of  Theorem \ref{s1t11}:

\begin{theorem}
\label{s4t06}
$(\mathbf{ZF})$ Let $\mathbf{X}$ be a compact Hausdorff, non-scattered space which has a cuf base. If $\mathbf{X}$ is weakly Loeb, then $|\mathbb{R}|\leq|[X]^{<\omega}|$. If $\mathbf{X}$ is Loeb, then $|\mathbb{R}|\leq|X|$. 
\end{theorem}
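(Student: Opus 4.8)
The plan is to locate a non-empty closed dense-in-itself subspace of $\mathbf{X}$, to build inside it a Cantor scheme of open sets entirely in $\mathbf{ZF}$ along the lines of the proof to Theorem \ref{s3p06}, and then to turn the resulting $\{0,1\}^{\omega}$-indexed disjoint family of compact sets into an injection by means of the (weak) Loeb function.

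First I would record the reductions. Since $\mathbf{X}$ is compact Hausdorff, a canonical-cover argument (to separate a point $x$ from a closed, hence compact, set $C$, cover $C$ by the collection of \emph{all} open $V$ with $x\notin\cl_{\mathbf{X}}(V)$ and extract a finite subcover) shows in $\mathbf{ZF}$ that $\mathbf{X}$ is $T_3$; by Theorem \ref{Umt}(ii) it is therefore metrizable, and I fix a metric $d$ inducing its topology. As $\mathbf{X}$ is not scattered, some non-empty subspace has no isolated point, and the closure $P$ of such a subspace is a non-empty \emph{closed} dense-in-itself subspace (the closure of a dense-in-itself set is dense-in-itself, a purely logical fact). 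Being closed in the compact space $\mathbf{X}$, the space $P$ is compact, it is $T_3$, it inherits a cuf base via the traces of a cuf base of $\mathbf{X}$, and, because the non-empty closed subsets of $P$ are exactly the non-empty closed subsets of $\mathbf{X}$ contained in $P$, the restriction of a (weak) Loeb function of $\mathbf{X}$ is a (weak) Loeb function of $P$. Since a Loeb function is in particular a weak Loeb function, $P$ is weakly Loeb in both cases, so Proposition \ref{s1p15} furnishes a dense cuf set $A=\bigcup_{n\in\omega}A_n$ in $P$ with each $A_n$ finite.

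The heart of the argument is to define, by recursion on $n$ and \emph{without any choices}, a family $\{B_s:s\in S\}$ of non-empty relatively open subsets of $P$ satisfying the analogues of conditions (1)--(3) in the proof to Theorem \ref{s3p06}, using the fixed indexed family $(A_n)_{n\in\omega}$ in place of the partition used there. Given a parent $W=B_s$, density of $A$ makes $n_W=\min\{m:A_m\cap W\neq\emptyset\}$ well defined; since $P$ is dense-in-itself the set $A\cap W$ is infinite, so $k_W=\min\{m:A_m\cap(W\setminus A_{n_W})\neq\emptyset\}$ is well defined too. The finite seeds $Y_{W,0}=A_{n_W}\cap W$ and $Y_{W,1}=A_{k_W}\cap(W\setminus A_{n_W})$ are non-empty and disjoint, so $r_W=d(Y_{W,0},Y_{W,1})>0$, and I set $B_{s\smallfrown t}=W\cap\bigcup\{B_d(y,r_W/3):y\in Y_{W,t}\}$ for $t\in\{0,1\}$, starting from the root $W=P$. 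Intersecting with $W$ secures the nesting $\cl_{P}(B_{s\smallfrown t})\subseteq\cl_{P}(B_s)$, and the radius $r_W/3$ secures disjointness of sibling closures. For each $f\in\{0,1\}^{\omega}$ the set $M_f=\bigcap_{n}\cl_{P}(B_{f\upharpoonright n})$ is then a decreasing intersection of non-empty compact subsets of $P$, hence non-empty and closed in $P$, thus closed in $\mathbf{X}$; and the sibling-separation forces $M_f\cap M_g=\emptyset$ whenever $f\neq g$.

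Finally I would harvest the branches. If $\mathbf{X}$ is Loeb, a Loeb function $\lambda$ gives $\lambda(M_f)\in M_f$, and $f\mapsto\lambda(M_f)$ is an injection of $\{0,1\}^{\omega}$ into $X$ by disjointness of the $M_f$; since $|\{0,1\}^{\omega}|=|\mathbb{R}|$ in $\mathbf{ZF}$, this yields $|\mathbb{R}|\leq|X|$. If $\mathbf{X}$ is merely weakly Loeb, a weak Loeb function $\lambda$ gives $\lambda(M_f)\in[M_f]^{<\omega}\setminus\{\emptyset\}$, and the same disjointness makes $f\mapsto\lambda(M_f)$ an injection of $\{0,1\}^{\omega}$ into $[X]^{<\omega}$, whence $|\mathbb{R}|\leq|[X]^{<\omega}|$. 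I expect the only real obstacle to lie in the middle paragraph: one must carry out the Cantor scheme choice-freely, and the two points that make this possible are that the \emph{fixed} dense cuf set supplies canonical finite seeds through the $\min$ operation on the index $\omega$, and that the dense-in-itself property of $P$ guarantees the second seed always exists, so the recursion never stalls.
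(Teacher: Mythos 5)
Your proof is correct, and its weakly Loeb half is essentially the paper's own argument: reduce to a non-empty closed dense-in-itself compact subspace, metrize via Theorem \ref{Umt}(ii), extract a dense cuf set from Proposition \ref{s1p15}, run the choice-free Cantor scheme from the proof of Theorem \ref{s3p06}, and compose the resulting pairwise disjoint family $\{M_f: f\in\{0,1\}^{\omega}\}$ of non-empty closed sets with a weak Loeb function. Where you genuinely diverge is the Loeb half: the paper does not reuse the Cantor scheme there, but instead quotes the fact that every compact metrizable Loeb space is second-countable (citing \cite{KT2}) and then applies the $\mathbf{ZF}$ version of the \v{C}ech--Pospi\v{s}il theorem (Theorem \ref{s1t11}) to the dense-in-itself subspace to conclude $|\mathbb{R}|\leq|X|$ directly. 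Your treatment is more uniform and self-contained: a single construction serves both conclusions, with the Loeb function selecting points rather than finite subsets of the sets $M_f$, and it avoids both external results; the paper's route buys a shorter Loeb case at the cost of leaning on the cited literature. The additional details you supply --- the canonical-cover proof that compact Hausdorff spaces are $T_3$ in $\mathbf{ZF}$, the passage to the closed dense-in-itself subspace $P$ together with the restriction of the (weak) Loeb function and of the cuf base, and the observation that the seeds always exist because non-empty open subsets of a dense-in-itself $T_1$-space are infinite --- are all needed and correct; the paper compresses them into its ``without loss of generality'' reduction and the phrase ``mimicking the proof to Theorem \ref{s3p06}''.
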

\begin{proof}
Without loss of generality, we may assume that $\mathbf{X}$ is dense-in-itself because  we can replace $\mathbf{X}$ with its non-empty dense-in-itself compact subspace. By Theorem 1.13(ii), $\mathbf{X}$ is metrizable. It is known that every compact metrizable Loeb space is second-countable (see, e.g., \cite{KT2}). Hence, if $\mathbf{X}$ is Loeb, then $|\mathbb{R}|\leq |X|$ by Theorem \ref{s1t11}. Suppose that $\mathbf{X}$ is weakly Loeb.  Let $d$ be any metric on $X$ which induces the topology of $\mathbf{X}$. It follows from Proposition \ref{s1p15} that $\mathbf{X}$ has a dense cuf set. Since $\mathbf{X}$ is non-empty and dense-in-itself, every dense subset of $\mathbf{X}$ is infinite. Therefore, we can fix a disjoint family $\{X_n: n\in\omega\}$ of non-empty finite subsets of $\mathbf{X}$ such that the set $\bigcup\{X_n: n\in\omega\}$ is dense in $\mathbf{X}$. Mimicking the proof to Theorem 4.5, we can define an injection $F:\{0,1\}^{\omega}\to\mathcal{P}(X)$ such that, for every $f\in\{0,1\}^{\omega}$, the set $M_f=F(f)$ is a non-empty closed subset of $\mathbf{X}$ and, for every pair $f,g$ of distinct functions from $\{0,1\}^{\omega}$, $M_f\cap M_g=\emptyset$. Let $\psi$ be a weak Loeb function for $\mathbf{X}$. Then $\psi\circ F$ is an injection from $\{0, 1\}^{\omega}$ into $[X]^{<\omega}$. Hence $|\mathbb{R}|=|\{0, 1\}^{\omega}|\leq|[X]^{<\omega}|$. 
\end{proof}

Taking the opportunity,  let us give a proof to the following theorem:

\begin{theorem}
\label{woac}
$(\mathbf{ZF})$
\begin{enumerate}
\item[($a$)] For every non-empty set $I$ and every family $\{\langle X_i, \tau_i\rangle: i\in I\}$ of denumerable metrizable compact spaces, the family $\{X_i: i\in I\}$ has a multiple choice function.
\item[($b$)] For a non-zero von Neumann ordinal $\alpha$, let $\{\mathbf{X}_\gamma: \gamma\in\alpha\}$ be a family of pairwise disjoint non-empty  countable, compact metrizable spaces. Then the direct sum $\mathbf{X}=\bigoplus\limits_{\gamma\in\alpha}\mathbf{X}_{\gamma}$ is weakly Loeb.
\item[($c$)] The following conditions are all equivalent:
\begin{enumerate}
\item[(i)] $\mathbf{WOAC}_{fin}$;
\item[(ii)] for every well-orderable set $S$ and every family $\{\langle X_s, d_s\rangle: s\in S\}$ of scattered totally bounded metric spaces, the union $\bigcup\limits_{s\in S}X_s$ is well-orderable. 

\item[(iii)] for every well-orderable non-empty set $S$ and every family $\{\langle X_s, d_s\rangle: s\in S\}$ of compact scattered metric spaces, the product $\prod\limits_{s\in S}\langle X_s, \tau(d_s)\rangle$ is compact. 
\end{enumerate}

\end{enumerate}
\end{theorem}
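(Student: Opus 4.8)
The plan is to handle parts ($a$), ($b$) and ($c$) separately, with one canonical construction underlying the first two. For ($a$) and ($b$) I would exploit that a nonempty countable compact metrizable space is scattered by Theorem~\ref{s3p06}, so Proposition~\ref{s1p10} applies and yields a \emph{definable} finite set. Concretely, if $\mathbf{Y}$ is a nonempty countable compact metrizable space, then $|\mathbf{Y}|_{\CB}$ is a successor ordinal $\beta+1$; the derivative $Y^{(\beta)}$ is nonempty and satisfies $Y^{(\beta)}=\Iso(Y^{(\beta)})$, so it is a nonempty compact discrete, hence finite, subset of $Y$, obtained without any appeal to choice. For ($a$) I would simply set $f(i)=X_i^{(\beta_i)}$ where $\beta_i+1=|\mathbf{X}_i|_{\CB}$, getting a multiple choice function for $\{X_i:i\in I\}$. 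For ($b$), given a nonempty closed set $C$ in the direct sum, I would use that $\alpha$ is well-ordered to take $\gamma_0=\min\{\gamma\in\alpha:C\cap X_\gamma\neq\emptyset\}$; since each $X_\gamma$ is clopen in the sum, $C\cap X_{\gamma_0}$ is a nonempty closed, hence compact, countable metrizable subspace, to which the same Cantor--Bendixson assignment attaches a canonical nonempty finite set. This defines a weak Loeb function, so $\mathbf{X}$ is weakly Loeb.

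For ($c$) I would prove $(i)\Leftrightarrow(ii)$ and $(i)\Leftrightarrow(iii)$. The engine for the forward implications is the canonical cuf decomposition: a compact, or merely totally bounded, scattered metric space $\langle X_s,d_s\rangle$ is totally bounded, so by Theorem~\ref{s3t04}(ii) together with the constructions in the proofs of Theorems~\ref{s2t4} and~\ref{s3t04} it is the union of a \emph{canonically defined} family $\{A^s_{\gamma,n}:\gamma\in\alpha_s,\ n\in\mathbb{N}\}$ of finite sets, with $\alpha_s=|\mathbf{X}_s|_{\CB}$ countable. For $(i)\Rightarrow(ii)$ I would first record that $\mathbf{WOAC}_{fin}$ implies that the union of any well-orderable family of finite sets is well-orderable: choosing, via $\mathbf{WOAC}_{fin}$ applied to the finite sets of linear orderings, a linear order on each member, one lexicographically well-orders the union using the least-index rule and a fixed well-order of the index set. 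Since $S$ is well-orderable, the index set $\{(s,\gamma,n)\}$ is well-orderable, so this applied to $\{A^s_{\gamma,n}\}$ gives well-orderability of $\bigcup_{s\in S}X_s$. The converse $(ii)\Rightarrow(i)$ is immediate: given a well-orderable family $\{F_s:s\in S\}$ of nonempty finite sets, equip each $F_s$ with the discrete metric to obtain a scattered totally bounded metric space, apply (ii) to well-order $\bigcup_sF_s$, and read off the least element of each $F_s$.

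For $(i)\Rightarrow(iii)$ I would reindex $S$ by a von Neumann cardinal and apply $\mathbf{WOAC}_{fin}$ once to the well-orderable family of all finite sets of linear orderings of the pieces $A^s_{\gamma,n}$; combined with the canonical well-order of each countable set $\alpha_s\times\mathbb{N}$, this produces a \emph{uniform} family $\{W_s:s\in S\}$ of well-orderings of the spaces $X_s$, hence a uniform family of Loeb functions (least elements of closed sets), and Theorem~\ref{t:loeb} yields compactness of $\prod_{s\in S}\langle X_s,\tau(d_s)\rangle$, the case $S$ finite being the $\mathbf{ZF}$-provable finite Tychonoff theorem. For $(iii)\Rightarrow(i)$ I would argue contrapositively: if $\mathbf{WOAC}_{fin}$ fails, fix a well-orderable family $\{F_s:s\in S\}$ of nonempty finite sets with no choice function, adjoin to each $F_s$ a new isolated point $\infty_s$, and let $X_s=F_s\cup\{\infty_s\}$ carry the discrete metric, so each $X_s$ is finite, hence compact scattered metrizable. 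The sets $U_s=\pi_s^{-1}(\{\infty_s\})$ are open and cover $\prod_sX_s$ precisely because $\prod_sF_s=\emptyset$, yet no finite subfamily covers the point taking values in $F_s$ on a prescribed finite set of coordinates and equal to $\infty_s$ elsewhere; thus the product is not compact and $(iii)$ fails.

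The routine parts are the canonical nature of the Cantor--Bendixson finite sets and the discrete-metric reductions. The main obstacle is $(i)\Rightarrow(iii)$: one must extract a \emph{single} family of well-orderings, equivalently Loeb functions, indexed by $S$, rather than well-ordering each $X_s$ in isolation, so that Theorem~\ref{t:loeb} can be applied. This is exactly why the canonical, choice-free decomposition of Theorems~\ref{s2t4} and~\ref{s3t04} is needed: it lets one invocation of $\mathbf{WOAC}_{fin}$ on the aggregated family of finite sets of orderings produce the uniform family, whereas an unorganized choice of well-order on each $X_s$ would not be available in $\mathbf{ZF}$.
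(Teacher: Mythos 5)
Your proposal is correct and follows essentially the same route as the paper's proof: the canonical Cantor--Bendixson finite set $X^{(\beta)}$ for ($a$) and ($b$), a single application of $\mathbf{WOAC}_{fin}$ to the finite sets of orderings (bijections) of the canonical cuf-decomposition pieces from Theorems \ref{s2t4} and \ref{s3t04} to well-order $\bigcup_{s\in S}X_s$, least-element Loeb functions plus Theorem \ref{t:loeb} for compactness of the product, and a Kelley-style argument for (iii)$\rightarrow$(i). The only differences are organizational: you prove the two biconditionals (i)$\Leftrightarrow$(ii) and (i)$\Leftrightarrow$(iii) instead of the paper's cycle (i)$\rightarrow$(ii)$\rightarrow$(iii)$\rightarrow$(i), and you spell out the finite-subcover argument that the paper only cites from Kelley.
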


\begin{proof}
($a$) Let $\mathbf{X}_i=\langle X_i, \tau_i\rangle$ be a denumerable metrizable compact space for every $i\in I$ with $I\neq\emptyset$. For every $i\in I$, let $\alpha_i=|\mathbf{X}_i|_{\CB}$. We fix $i\in I$ and observe that,  since  $\mathbf{X}_i$ is scattered by Theorem \ref{s3p06}, it follows from Proposition \ref{s1p10} that $X_i^{(\alpha_i)}=\emptyset$ and $\alpha_i$ is a successor ordinal. Let $\beta_i\in ON$ be such that $\alpha_i=\beta_i+1$. Then $X_i^{(\beta_i)}=\Iso(X_i^{(\beta_i)})$. If the set $\Iso(X_i^{(\beta_i)})$ were infinite, it would have an accumulation point in $X_i^{(\beta_i)}$ by the compactness of $\mathbf{X}_i$. Hence $\Iso(X_i^{(\beta_i)})$ is a non-empty finite set. In consequence, by assigning to any $i\in I$ the set $\Iso(X_i^{(\beta_i)})$, we obtain a multiple choice function of $\{X_i: i\in I\}$.

($b$) Let us consider the family $\mathcal{F}$ of all non-empty closed sets of $\mathbf{X}$.  By the proof of ($a$), there exists a family $\{f_{\gamma}: \gamma\in\alpha\}$ such that, for every $\gamma\in\alpha$, $f_{\gamma}$ is a weak Loeb function of $\mathbf{X}_{\gamma}$. For every $F\in\mathcal{F}$, let $\gamma(F)=\min\{\gamma\in\alpha: F\cap X_{\gamma}\neq\emptyset\}$ and let $f(F)=f_{\gamma(F)}(F\cap X_{\gamma})$. Then $f$ is a weak Loeb function of  $\mathbf{X}$.

($c$) (i)$\rightarrow$(ii) Let us assume $\mathbf{WOAC}_{fin}$. Suppose that  $S$ is a well-orderable non-empty set and, for every $s\in S$, $\mathbf{X}_s=\langle X_s, d_s\rangle$ is a non-empty scattered  totally bounded metric space. To prove that $X=\bigcup\limits_{s\in S}X_s$ is well-orderable,  without loss of generality, we may assume that $S=\alpha$ for some non-zero von Neumann ordinal $\alpha$, and $X_i\cap X_j=\emptyset$ for every pair $i,j$ of distinct elements of $\alpha$. In much the same way, as in the proof to Theorem \ref{s3t04}, we can define a family $\{A_{i, n}: i\in\alpha, n\in\mathbb{N}\}$ of non-empty finite sets such that, for every $i\in\alpha$, $X_i=\bigcup\limits_{n\in\mathbb{N}}A_{i,n}$. Now, we can easily define a family $\{M_i: i\in\alpha\}$ of subsets of $\mathbb{N}$ and a family $\{F_{i,n}: i\in\alpha, n\in M_i\}$ of pairwise disjoint non-empty finite sets such that, for every $i\in\alpha$, $X_i=\bigcup\limits_{n\in M_i}F_{i,n}$. The set $J=\{\langle i, n\rangle: i\in\alpha, n\in M_i\}$ is well-orderable, so we can fix a von Neumann ordinal number $\gamma$ and a bijection $h:\gamma\to J$. For every $j\in\gamma$, let $n(j)\in\omega$ be equipotent to $F_{h(j)}$, and let $B_j=\{ f\in F_{h(j)}^{n(j)}: f\text{ is a bijection}\}$. By $\mathbf{WOAC}_{fin}$, there exists $\psi\in\prod\limits_{j\in\gamma}B_{j}$. Now, we can define a well-ordering $\leq$ on $X=\bigcup\limits_{j\in\gamma}F_{h(j)}$ as follows: for  $i,j\in\gamma$, $x\in F_{h(i)}, y\in F_{h(j)}$, we put $x\leq y$ if either $i\in j$ or $i=j$ and $\psi(i)^{-1}(x)\subseteq \psi(i)^{-1}(y)$. Hence (i) implies (ii)

(ii)$\rightarrow$(iii) Let $S$ be a non-empty well-orderable set and, for every $s\in S$, let $\langle X_s, d_s\rangle$ be a compact scattered metric space. Clearly, we may assume that $S$ is a von Neumann cardinal. We notice that if $X=\bigcup\limits_{s\in S}X_s$ is well-orderable, then we can define a family $\{f_s: s\in S\}$ such that, for every $s\in S$, $f_s$ is a Loeb function of $\mathbf{X}_s=\langle X_s, \tau(d_s)\rangle$ and, therefore, by Theorem \ref{t:loeb}, the product $\prod\limits_{s\in S}\mathbf{X}_s$ is compact. Hence (ii) implies (iii).

(iv)$\rightarrow$(i) Now, let $S$ be a well-orderable set and let $\{A_s: s\in S\}$ be a family of non-empty finite sets. Take an element $\infty\notin\bigcup\limits_{s\in S}A_s$ and put $Y_s=A_s\cup\{\infty\}$ for $s\in S$. Let $\rho_s$ be the discrete metric on $Y_s$ and let $\mathbf{Y}_s=\langle Y_s, \mathcal{P}(Y_s)\rangle$ for every $s\in S$. Assuming (iii), we obtain that the space $\mathbf{Y}=\prod\limits_{s\in S}\mathbf{Y}_s$ is compact. In much the same way, as in the proof of Kelley's theorem that the Tychonoff Product Theorem implies $\mathbf{AC}$ (see \cite{kl}), one can show that the compactnes of $\mathbf{Y}$ implies that $\prod\limits_{s\in S}A_s\neq\emptyset$. Hence (iii) implies (i).
\end{proof}

\section{Alexadroff compactifications of infinite discrete cuf spaces as subspaces}
\label{s5}

In this section, we shed some light on the problem of whether it is provable in $\mathbf{ZF}$ that every non-empty dense-in-itself compact metrizable space contains an infinite compact scattered subspace. Unfortunately, we are still unable to give a satisfactiory solution to this problem. Certainly, $\mathbb{N}(\infty)$ or, more generally, for every infinite discrete cuf space $\mathbf{D}$, $\mathbf{D}(\infty)$ is a simple example of an infinite compact metrizable scattered space. So, let us search for conditions on a non-discrete compact metrizable space to contain a copy of $\mathbf{D}(\infty)$ for some infinite discrete cuf space $\mathbf{D}$. Let us start with the following simple proposition:

\begin{proposition}
\label{s4p01}
$(\mathbf{ZF})$
Let $\mathbf{X}$ be a non-discrete first-countable Loeb $T_3$-space. Then $\mathbf{X}$ contains a copy of $\mathbb{N}(\infty)$. In particular, every non-discrete metrizable Loeb space contains a copy of $\mathbb{N}(\infty)$.
\end{proposition}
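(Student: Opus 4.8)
The plan is to locate a non-isolated point $x_\infty$ of $\mathbf{X}$ together with a sequence of pairwise distinct points approaching it, and then to observe that in a Hausdorff space such a configuration is automatically a copy of $\mathbb{N}(\infty)$. The only genuine difficulty is that, in $\mathbf{ZF}$, the countably many selections involved cannot be made arbitrarily; this is exactly where the hypotheses pay off, since the Loeb property supplies a choice function $\ell$ on the non-empty closed subsets of $\mathbf{X}$, which I will use to make every selection canonical.

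First I would fix the limit point. Because $\mathbf{X}$ is non-discrete, $\Iso(X)$ is a proper open subset, so $X\setminus\Iso(X)$ is a non-empty closed set, and I put $x_\infty=\ell(X\setminus\Iso(X))$, a definite non-isolated point. Using first-countability I fix one countable open neighbourhood base at $x_\infty$ (a single existential instantiation, needing no choice), and then refine it canonically, by minimization over the index set, to a decreasing base $\{V_n\}_{n\in\omega}$ with $\cl_{\mathbf X}(V_{n+1})\subseteq V_n$ and $V_n\setminus\cl_{\mathbf X}(V_{n+1})\neq\emptyset$ for every $n$. The admissibility of a later base element at each step follows from $T_3$: given $V_{k_i}$, non-isolation yields some $p\neq x_\infty$ in it, regularity yields an open $O$ with $x_\infty\in O\subseteq\cl_{\mathbf X}(O)\subseteq V_{k_i}\setminus\{p\}$, and any base element inside $O$ then works; always taking least indices keeps the construction canonical. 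Here Hausdorffness is used implicitly, since a non-isolated point of a $T_2$-space has no minimal neighbourhood, so the base genuinely keeps shrinking.

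Next I would extract the points. For each $n$ the shell $\tilde C_n:=\cl_{\mathbf X}\!\left(V_n\setminus\cl_{\mathbf X}(V_{n+1})\right)$ is a non-empty closed set, and since $V_n\setminus\cl_{\mathbf X}(V_{n+1})\subseteq X\setminus V_{n+1}$ with the latter closed, we get $\tilde C_n\subseteq X\setminus V_{n+1}$; in particular $x_\infty\notin\tilde C_n$. This is precisely the device that prevents the Loeb function from returning the forbidden value $x_\infty$. I set $x_n:=\ell(\tilde C_n)$. Then $x_n\neq x_\infty$, while $x_n\in\cl_{\mathbf X}(V_n)\subseteq V_{n-1}$ (for $n\geq1$) forces $x_n\to x_\infty$. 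For distinctness, if $m\geq n+2$ then $x_m\in\cl_{\mathbf X}(V_m)\subseteq V_{m-1}\subseteq V_{n+1}$, whereas $x_n\notin V_{n+1}$, so $x_n\neq x_m$; hence the odd-indexed terms are pairwise distinct and the range $R=\{x_n:n\in\omega\}$ is infinite.

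Finally I would identify the subspace. With $R$ infinite, $x_\infty\notin R$, and $R\setminus U$ finite for every neighbourhood $U$ of $x_\infty$ (by convergence), a routine Hausdorff separation argument shows that each point of $R$ is isolated in $Y:=R\cup\{x_\infty\}$ while $x_\infty$ is not, and that the neighbourhoods of $x_\infty$ in $Y$ are exactly the cofinite subsets of $Y$ containing it; thus $Y$ is homeomorphic to $\mathbb{N}(\infty)$. The ``in particular'' clause is then immediate, since every metrizable space is first-countable and $T_3$. I expect the main obstacle to be exactly the coordination of the canonical refinement of the base with the shell construction, so that $\ell$ is only ever applied to closed sets provably missing $x_\infty$; once that is arranged, convergence, distinctness, and the final homeomorphism are routine.
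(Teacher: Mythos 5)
Your construction follows the paper's own proof step for step: fix a non-isolated point, build a decreasing sequence of open neighbourhoods whose closures nest, apply the Loeb function to closed ``shells'' that provably avoid that point, and note that terms whose indices differ by at least $2$ are distinct (the paper uses the shells $\cl_{\mathbf{X}}(U_n)\setminus U_{n+1}$ and keeps the even-indexed points; your variant $\cl_{\mathbf{X}}\left(V_n\setminus\cl_{\mathbf{X}}(V_{n+1})\right)$ is an immaterial change). Everything downstream of the refinement step is fine. The gap is in the refinement step itself: you assert that choosing, at each stage, the \emph{least-indexed} base element $W_k$ satisfying $\cl_{\mathbf{X}}(W_k)\subseteq V_n$ and $V_n\setminus\cl_{\mathbf{X}}(W_k)\neq\emptyset$ yields a neighbourhood \emph{base} $\{V_n\}$ at $x_\infty$. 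Your proof genuinely needs this, since base-ness is exactly what converts $x_n\in\cl_{\mathbf{X}}(V_n)\subseteq V_{n-1}$ into $x_n\to x_\infty$; but the minimization you describe does not guarantee it.

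Concretely, work in $\mathbb{R}^2$ (non-discrete, metrizable, and Loeb in $\mathbf{ZF}$) with $x_\infty=(0,0)$, and take the base consisting of the wide boxes $P_k=\left(-\frac{1}{2}-\frac{1}{k},\frac{1}{2}+\frac{1}{k}\right)\times\left(-\frac{1}{k},\frac{1}{k}\right)$ and the small boxes $Q_k=\left(-\frac{1}{k},\frac{1}{k}\right)^2$, enumerated as $W_1=P_2$, $W_{2n}=P_{n+2}$, $W_{2n+1}=Q_{n+1}$ for $n\geq 1$. An easy induction gives $V_n=P_{n+1}$ for all $n\geq 1$: when $V_n=P_{n+1}$, a box $Q_m$ is admissible only if $\frac{1}{m}<\frac{1}{n+1}$, so the first admissible $Q$ is $Q_{n+2}=W_{2n+3}$, while $P_{n+2}=W_{2n}$ is admissible with smaller index and wins. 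Hence every $V_n$ contains the segment $\left(-\frac{1}{2},\frac{1}{2}\right)\times\{0\}$, and $\{V_n\}$ is not a base. This is fatal rather than cosmetic: take the Loeb function ``the point of $F$ nearest to $(10,0)$, lexicographically least among ties'' (well defined in $\mathbf{ZF}$ since distances to closed subsets of $\mathbb{R}^2$ are attained and non-empty compact sets have lexicographic minima). Your recipe then outputs $x_n=\left(\frac{1}{2}+\frac{1}{n+1},0\right)$, which converges to $\left(\frac{1}{2},0\right)\neq x_\infty$, so $Y=R\cup\{x_\infty\}$ is an infinite \emph{discrete} subspace of $\mathbb{R}^2$, not a copy of $\mathbb{N}(\infty)$. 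The repair is standard and small: at stage $n+1$ minimize over those $k$ with $\cl_{\mathbf{X}}(W_k)\subseteq V_n\cap W_{n+1}$ (plus your properness condition). Admissibility follows from your same regularity-plus-non-isolation argument applied to the neighbourhood $V_n\cap W_{n+1}$, and the extra constraint forces $V_n\subseteq W_n$, hence base-ness; with that emendation your proof is correct. (The paper sidesteps the issue by simply asserting, without construction, that such a nested base exists.)
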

\begin{proof}
Let $x_0$ be an accumulation point of $\mathbf{X}$ and let $f$ be a Loeb function of $\mathbf{X}$. Since $\mathbf{X}$ is a first-countable $T_3$-space and $x_0$ is an accumulation point of $\mathbf{X}$, there exists a base $\{U_n: n\in\mathbb{N}\}$ of open neighborhoods of $x_0$ such that $\text{cl}_{\mathbf{X}}(U_{n+1})\subset U_n$ for every $n\in\mathbb{N}$. Let $x_n=f(\text{cl}_{\mathbf{X}}(U_n)\setminus U_{n+1})$ for every $n\in\mathbb{N}$. Then the subspace $\{x_{2n}: n\in\omega\}$ of $\mathbf{X}$ is a copy of $\mathbb{N}(\infty)$.
\end{proof}

\begin{theorem} 
\label{s4t2}
$(\mathbf{ZF})$ 
\begin{enumerate}
\item[(i)] $\mathbf{IQDI}$ implies that every infinite compact first-countable Hausdorff space contains a copy of $\mathbf{D}(\infty)$ for some infinite discrete cuf space $\mathbf{D}$. 
\item[(ii)] Each of $\mathbf{IDI}$, $\mathbf{WoAm}$ and $\mathbf{BPI}$  implies that every infinite compact first-countable Hausdorff space contains a copy of $\mathbb{N}(\infty)$. 
\item[(iii)] Every infinite first-countable  compact Hausdorff separable space contains a copy of $\mathbb{N}(\infty)$. Every infinite first-countable compact Hausdorff space having an infinite cuf subset contains a copy of $\mathbf{D}(\infty)$ for some infinite discrete cuf space. 
\end{enumerate}
\end{theorem}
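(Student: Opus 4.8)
The plan is to route all six implications through a single \emph{choice-free} construction and then feed each hypothesis into it by exhibiting an appropriate infinite subset. First I would record two facts provable in $\mathbf{ZF}$ about an infinite compact Hausdorff space $\mathbf{X}$: it is limit point compact (if $A\subseteq X$ is infinite with no accumulation point, then $A$ is closed and discrete, hence a compact discrete, so finite set; the open cover used, assigning to each $x$ the union of all open $U$ with $U\cap A\subseteq\{x\}$, is canonical, so no choice is invoked), and it is $T_3$ (compact Hausdorff spaces are regular in $\mathbf{ZF}$). The heart is the following lemma, entirely within $\mathbf{ZF}$. Let $\mathbf{X}$ be first-countable Hausdorff, let $C=\bigcup_{k\in\omega}C_k$ be an infinite cuf subset with a \emph{fixed} enumeration by finite sets $C_k$, and let $x_0$ be an accumulation point of $C$; by $T_1$, every neighborhood of $x_0$ meets $C$ in an infinite set. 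I would fix one countable base $\{V_n\}$ at $x_0$, put $U_n=\bigcap_{k\le n}V_k$ (a decreasing base), and note that by Hausdorffness $\bigcap_n\cl_{\mathbf{X}}(U_n)=\{x_0\}$. Then I build by recursion on $j$, using only minima relative to the fixed enumerations, a strictly increasing sequence $(n_j)$ and pairwise disjoint nonempty finite sets $F_j\subseteq C\cap U_{n_j}$ with $x_0\notin F_j$: given $n_j$, let $k_j$ be least with $C_{k_j}\cap(U_{n_j}\setminus\{x_0\})\neq\emptyset$ and set $F_j=C_{k_j}\cap(U_{n_j}\setminus\{x_0\})$; then let $n_{j+1}$ be the least $m>n_j$ with $F_j\cap\cl_{\mathbf{X}}(U_m)=\emptyset$, which exists because $F_j$ is finite, $x_0\notin F_j$, and the closures shrink to $\{x_0\}$. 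Put $D=\bigcup_j F_j$.

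Next I would verify that $D$ works. Since $F_j\subseteq U_{n_j}\setminus\cl_{\mathbf{X}}(U_{n_{j+1}})$ while $F_i\subseteq U_{n_i}\subseteq U_{n_{j+1}}$ for $i>j$, the $F_j$ are pairwise disjoint, so $D$ is an infinite cuf set (denumerable if the $C_k$ are singletons, i.e.\ if $C$ is denumerable). For $z\in F_j$ the open set $X\setminus\cl_{\mathbf{X}}(U_{n_{j+1}})$ contains $z$, misses every $F_i$ with $i>j$ and misses $x_0$; intersecting with a $T_1$-neighborhood avoiding the finitely many other points of $F_0\cup\dots\cup F_j$ isolates $z$, so $\mathbf{D}$ is discrete. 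As $\{U_{n_j}\}$ is a base at $x_0$ and $\bigcup_{i\ge j}F_i\subseteq U_{n_j}$, every neighborhood of $x_0$ contains all but finitely many points of $D$; hence the trace of the neighborhood filter of $x_0$ on $D\cup\{x_0\}$ is precisely the cofinite filter of $D$, and $D\cup\{x_0\}$ is a copy of $\mathbf{D}(\infty)$ (of $\mathbb{N}(\infty)$ when $D$ is denumerable).

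With the lemma in hand the three items follow by supplying the right subset. For (iii) the second assertion is immediate, since the given infinite cuf subset has an accumulation point by limit point compactness; separability furnishes an infinite denumerable dense set, to which the lemma applies, yielding $\mathbb{N}(\infty)$. For (i), $\mathbf{IQDI}$ gives an injection $n\mapsto S_n$ into $[X]^{<\omega}$, and since a finite set has only finitely many finite subsets, the distinct $S_n$ have infinite union, an infinite cuf subset; now (iii) applies. For (ii): under $\mathbf{IDI}$ one extracts a denumerable subset and applies the lemma; under $\mathbf{WoAm}$ I would first prove in $\mathbf{ZF}$ that $\mathbf{X}$ has no amorphous subspace (if $Y$ were amorphous with accumulation point $x_0$, the map $y\mapsto\min\{n:y\notin U_{n+1}\}$ has finite fibres and finite initial preimages, forcing a partition of $Y$ into infinitely many nonempty finite sets, impossible for an amorphous set), so $X$ is well-orderable, hence Dedekind-infinite, and we reduce to the $\mathbf{IDI}$ case. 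Under $\mathbf{BPI}$ I would instead invoke Proposition \ref{s4p01}: $\mathbf{BPI}$ supplies a choice function for every family of nonempty compact Hausdorff spaces, in particular for the family of nonempty closed subsets of $\mathbf{X}$, so $\mathbf{X}$ is Loeb; being an infinite compact, hence non-discrete, first-countable $T_3$-space, it contains a copy of $\mathbb{N}(\infty)$.

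The main obstacle is keeping the recursion genuinely choice-free while still forcing discreteness. The naive approach of selecting single points from open annuli $U_n\setminus\cl_{\mathbf{X}}(U_{n+1})$ either requires countable choice or can fail outright, since such annuli may be empty or the chosen points may lie on boundaries and accumulate at one another. The device that circumvents both difficulties is to select \emph{finite} sets canonically from the fixed cuf enumeration and to take the next neighborhood small enough that its \emph{closure} misses the current finite set; the Hausdorff identity $\bigcap_n\cl_{\mathbf{X}}(U_n)=\{x_0\}$ is exactly what guarantees that the required index exists. The secondary points demanding care are the two $\mathbf{ZF}$ facts used without choice, namely regularity and limit point compactness of compact Hausdorff spaces, and the precise form in which $\mathbf{BPI}$ delivers Loeb-ness.
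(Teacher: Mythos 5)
Your core extraction lemma is, in substance, the paper's own construction for part (i): the same canonical recursion that alternately takes the least-indexed finite piece of the cuf set meeting the current basic neighborhood of $x_0$ and then shrinks the neighborhood far enough that its closure misses the piece just chosen. The only real difference is bookkeeping: the paper first refines the base so that $\cl_{\mathbf{X}}(U_{n+1})\subseteq U_n$ (using regularity of compact Hausdorff spaces in $\mathbf{ZF}$), while you keep an arbitrary decreasing base and use the Hausdorff identity $\bigcap_n\cl_{\mathbf{X}}(U_n)=\{x_0\}$; both are choice-free and correct, and your verification that $D\cup\{x_0\}$ carries exactly the topology of $\mathbf{D}(\infty)$ is sound. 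Consequently your treatment of (i), of both halves of (iii), and of the $\mathbf{IDI}$ case of (ii) coincides with the paper's (the paper runs (i) directly through the construction rather than through (iii), but that is the same argument).

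You genuinely diverge in the $\mathbf{WoAm}$ and $\mathbf{BPI}$ cases, and each divergence needs one repair. For $\mathbf{WoAm}$, the paper cites Proposition 4.4 of \cite{ktw0}, whereas you prove directly that $\mathbf{X}$ has no amorphous subspace; this is a nice self-contained substitute, but your claim that the fibres of $y\mapsto\min\{n:y\notin U_{n+1}\}$ are finite is not automatic. It needs the (one-line) argument: an infinite fibre would be cofinite in the amorphous set $Y$, and since the fibre over $n$ is disjoint from $U_{n+1}$, the set $Y\cap U_{n+1}$ would then be finite, contradicting (via $T_1$) that $x_0$ is an accumulation point of $Y$. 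With that inserted, the fibres form an $\omega$-indexed partition with infinitely many nonempty finite pieces, and splitting the set of indices of nonempty pieces into its even- and odd-positioned members does contradict amorphousness. For $\mathbf{BPI}$, your route rests on the assertion that $\mathbf{BPI}$ ``supplies a choice function for every family of nonempty compact Hausdorff spaces,'' hence that $\mathbf{X}$ is Loeb, after which Proposition \ref{s4p01} finishes. That assertion is true, but it is \emph{not} Theorem \ref{BPIeq}(ii): compactness of products says nothing in $\mathbf{ZF}$ about nonemptiness of products or about simultaneous choice, and this distinction is exactly the delicate point here. What you need is the classical theorem of {\L}o\'{s} and Ryll-Nardzewski that under $\mathbf{BPI}$ every product of nonempty compact Hausdorff spaces is nonempty (an ultrafilter on the coproduct of the algebras $\mathcal{P}(X_i)$ restricts to an ultrafilter on each factor, whose unique limit is taken canonically); it must be cited or proved, not waved through. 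The paper's own $\mathbf{BPI}$ argument is designed precisely to avoid this stronger tool: it forms the countable product of the compact sets $G_n=\{x_0\}\cup(\cl_{\mathbf{X}}(U_n)\setminus U_{n+1})$, which is nonempty for the trivial reason that the constant function at the canonical point $x_0$ lies in it, and then extracts the desired sequence by compactness plus the finite intersection property, invoking only Theorem \ref{BPIeq}(ii). So your route is correct once the {\L}o\'{s}--Ryll-Nardzewski theorem is properly invoked, but it buys the conclusion from a strictly stronger consequence of $\mathbf{BPI}$ than the paper uses.
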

\begin{proof}
Let $\mathbf{X}=\langle X, \tau\rangle$ be an infinite compact first-countable Hausdorff space. For a point $x_0\in X$, let  $\{U_n: n\in\omega\}$ be a base of neighborhoods of $x_0$ such that $\text{cl}_{\mathbf{X}}(U_{n+1})\subseteq U_n$ for every $n\in\omega$.\smallskip

(i) Assuming $\mathbf{IQDI}$, we can fix a disjoint family $\{F_n: n\in\omega\}$ of non-empty finite subsets of $X$. Since $\mathbf{X}$ is compact, the set $F=\bigcup\limits_{n\in\omega}F_n$ has an accumulation point. Let $x_0$ be an accumulation point of $F$. We may assume that $x_0\notin F$. Let $n_0=m_0=\min\{n\in\omega: U_0\cap F_{n}\neq\emptyset\}$, $M_0=\omega$ and $Y_0=F_{n_0}\cap U_0$. Since $\mathbf{X}$ is a $T_1$-space, $Y_0$ is a finite set and $x_0\notin Y_0$, the set $M_1=\{n\in\omega: U_n\cap Y_0=\emptyset\}$ is non-empty. Let $m_1=\min M_1$, $n_1=\min\{n\in\omega: U_{m_1}\cap F_n\neq\emptyset\}$ and $Y_1= U_{m_1}\cap F_{n_1}$.  Suppose that $k\in\omega\setminus\{0\}$ is such that we have already defined $n_k, m_k\in\omega$ such that $Y_k=U_{m_k}\cap F_{n_k}\neq\emptyset$. We put $M_{k+1}=\{n\in\omega: U_n\cap Y_{k}=\emptyset\}$, $m_{k+1}=\min M_{k+1}$, $n_{k+1}=\min\{n\in\omega: U_{m_{k+1}}\cap F_n\neq\emptyset\}$ and $Y_{k+1}=U_{m_{k+1}}\cap F_{n_{k+1}}$. This terminates our inductive definition. Let $D=\bigcup\limits_{k\in\omega}Y_k$ and $Y=D\cup\{x_0\}$.  Then $\mathbf{D}$ is a discrete cuf subspace of $\mathbf{X}$, the subspace $\mathbf{Y}$ of $\mathbf{X}$ is compact and  $x_0$ is a unique accumulation point of $\mathbf{Y}$. Hence $\mathbf{Y}$ is a copy of $\mathbf{D}(\infty)$. \smallskip

(ii) If $\mathbf{IDI}$ holds or $X$ is well-orderable, then we can fix a disjoint family $\{F_n: n\in\omega\}$ of singletons of $X$ and in much the same way, as in the proof to (i), we can find a copy of $\mathbb{N}(\infty)$ in $\mathbf{X}$. By Proposition 4.4 of \cite{ktw0}, $\mathbf{WoAm}$ implies that every first-countable limit point compact $T_1$-space is well-orderable. Hence, $\mathbf{WoAm}$ implies that $\mathbf{X}$ contains a copy of $\mathbb{N}(\infty)$. 

Now, let us assume $\mathbf{BPI}$. Let $x_0$ be an accumulation point of $\mathbf{X}$. Without loss of generality, we may aassume that $\text{cl}_{\mathbf{X}}(U_{n+1})\neq U_n$ for every $n\in\omega$. Let $G_n=\{x_0\}\cup ( \text{cl}_{\mathbf{X}}(U_n)\setminus U_{n+1})$ for every $n\in\omega$. Then the subspaces $\mathbf{G}_n$ of $\mathbf{X}$ are compact.  By Theorem \ref{BPIeq}(ii), the product $\mathbf{G}=\prod\limits_{n\in\omega}\mathbf{G}_n$ is compact. Therefore, since the family  $\mathcal{G}=\{\pi_n^{-1}(\text{cl}_{\mathbf{X}}(U_n)\setminus U_{n+1}): n\in\omega\}$ has the finite intersection property and consists of closed subsets of the compact space $\mathbf{G}$, there exists $f\in\bigcap\limits_{n\in\omega}\pi_n^{-1}(\text{cl}_{\mathbf{X}}(U_n)\setminus U_{n+1})$. Then the subspace $\{x_0\}\cup\{f(2n): n\in\omega\}$ of $\mathbf{X}$ is a copy of $\mathbb{N}(\infty)$. This completes the proof to (ii).

(iii) This can be deduced from the proofs to (i) and (ii). 
\end{proof}

\begin{corollary}
\label{s4c03} 
$(\mathbf{ZF})$
\begin{enumerate}
\item[(i)] Each of $\mathbf{IQDI}$, $\mathbf{WoAm}$ and $\mathbf{BPI}$ implies that every infinite compact Hausdorff first-countable space contains an infinite metrizable compact scattered subspace.
\item[(ii)] Every infinite compact Hausdorff first-countable space which has an infinite cuf subset contains an infinite compact metrizable scattered subspace.
\end{enumerate} 
\end{corollary}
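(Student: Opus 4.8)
The plan is to reduce the whole statement to a single observation about Alexandroff compactifications of discrete cuf spaces and then simply quote Theorem \ref{s4t2}. The unifying point is that, for every infinite discrete cuf space $\mathbf{D}$, the Alexandroff compactification $\mathbf{D}(\infty)$ is itself an infinite compact metrizable scattered space; once this is recorded, each clause of the corollary is an immediate application of the corresponding clause of Theorem \ref{s4t2}.

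First I would verify this key claim. Since a discrete space is locally compact Hausdorff, and is non-compact exactly when it is infinite, $\mathbf{D}(\infty)$ is well defined by Definition \ref{s1d7}, and it is infinite and compact by construction. Metrizability is immediate from Proposition \ref{discommet}(i), because $D$ is a cuf set. For scatteredness I would compute the Cantor--Bendixson derivatives directly: every point of $D$ is isolated in $\mathbf{D}(\infty)$, as singletons of $D$ are open, while $\infty$ is not isolated, since each of its basic neighbourhoods $D(\infty)\setminus K$ with $K$ compact meets the infinite set $D$. Hence $\Iso(D(\infty))=D$, so $D(\infty)^{(1)}=\{\infty\}$ and $D(\infty)^{(2)}=\emptyset$. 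By Proposition \ref{s1p10} this yields that $\mathbf{D}(\infty)$ is scattered, with $|\mathbf{D}(\infty)|_{\CB}=2$. The same computation applies to $\mathbb{N}(\infty)$, which is just $\mathbf{D}(\infty)$ for the cuf set $D=\mathbb{N}$, so $\mathbb{N}(\infty)$ is likewise an infinite compact metrizable scattered space.

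With the claim in hand both parts follow at once. For (i): under $\mathbf{IQDI}$, Theorem \ref{s4t2}(i) produces inside any infinite compact first-countable Hausdorff space a copy of $\mathbf{D}(\infty)$ for some infinite discrete cuf space $\mathbf{D}$, which by the claim is the desired infinite compact metrizable scattered subspace; under $\mathbf{WoAm}$ or $\mathbf{BPI}$, Theorem \ref{s4t2}(ii) produces a copy of $\mathbb{N}(\infty)$, which is of the same kind. Part (ii) is identical: Theorem \ref{s4t2}(iii) guarantees a copy of $\mathbf{D}(\infty)$ for some infinite discrete cuf space $\mathbf{D}$ inside any infinite compact first-countable Hausdorff space possessing an infinite cuf subset, and the claim finishes the argument. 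There is essentially no obstacle here, as the real content is carried entirely by Theorem \ref{s4t2} and Proposition \ref{discommet}(i); the only step needing any care is the scatteredness of $\mathbf{D}(\infty)$, and the hard part of that is merely remembering to pass through Proposition \ref{s1p10} (some derivative is empty $\Rightarrow$ scattered) rather than re-deriving the equivalence by hand.
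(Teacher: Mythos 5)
Your proposal is correct and matches the paper's (implicit) argument: the paper gives no separate proof of this corollary precisely because it follows from Theorem \ref{s4t2} together with the observation, recorded at the start of Section \ref{s5}, that $\mathbf{D}(\infty)$ for an infinite discrete cuf space $\mathbf{D}$ (and in particular $\mathbb{N}(\infty)$) is an infinite compact metrizable scattered space. Your verification of that observation — metrizability via Proposition \ref{discommet}(i) and scatteredness via the Cantor--Bendixson computation $\Iso(D(\infty))=D$, $D(\infty)^{(2)}=\emptyset$ and Proposition \ref{s1p10} — is exactly the intended content.
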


Let us recall a few known facts about the following permutation models in \cite{hr}: the Basic Fraenkel Model $\mathcal{N}1$, the Second Fraenkel Model $\mathcal{N}2$ and the Mostowski Linearly Ordered Model $\mathcal{N}3$. 

It is known that $\mathbf{WoAm}$ is true in $\mathcal{N}1$, $\mathbf{IWDI}$ (and hence the stronger $\mathbf{IQDI}$, which is implied by $\mathbf{CMC}$) is false in both $\mathcal{N}1$ and $\mathcal{N}3$, $\mathbf{CAC}_{fin}$ is true in $\mathcal{N}1$ but it is false in $\mathcal{N}2$, $\mathbf{MC}$ is true in $\mathcal{N}2$, and $\mathbf{BPI}$ (and hence $\mathbf{CAC}_{fin}$) is true in $\mathcal{N}3$.  All this, taken  together with Theorem \ref{s4t2} and its proof, implies the following theorem:

\begin{theorem}
\label{s4t04}
\begin{enumerate}
\item[(i)] It is true in $\mathcal{N}1$ that every first-countable limit point compact $T_1$-space is well-orderable. 
\item[(ii)] The sentence ``every infinite first-countable Hausdorff compact space contains a copy of $\mathbb{N}(\infty)$'' is true in $\mathcal{N}1$ and in $\mathcal{N}3$.
\item[(iii)] The sentence ``every infinite first-countable Hausdorff compact space contains an infinite metrizable compact scattered subspace'' is true in $\mathcal{N}2$.
\item[(iv)] The sentence ``every infinite first-countable Hausdorff compact space contains an infinite metrizable compact scattered subspace'' implies neither $\mathbf{CAC}_{fin}$ nor $\mathbf{IQDI}$, nor $\mathbf{CMC}$ in $\mathbf{ZFA}$.
\end{enumerate}
\end{theorem}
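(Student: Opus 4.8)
The plan is to derive every part of the theorem from the model-theoretic facts recorded in the paragraph immediately preceding its statement, together with Theorem~\ref{s4t2} and the implications extracted from its proof. Since all four assertions concern permutation models of $\mathbf{ZFA}$ directly, no Jech--Sochor or Pincus transfer is needed; the work reduces to matching each permutation model with an appropriate hypothesis of Theorem~\ref{s4t2} (or Corollary~\ref{s4c03}) and then reading off the truth values of the relevant choice forms in that model.

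For (i), I would recall the implication used inside the proof of Theorem~\ref{s4t2}(ii), namely that (by Proposition~4.4 of \cite{ktw0}) $\mathbf{WoAm}$ implies that every first-countable limit point compact $T_1$-space is well-orderable. As $\mathbf{WoAm}$ holds in $\mathcal{N}1$, assertion (i) follows at once. For (ii), I would invoke Theorem~\ref{s4t2}(ii): it yields the sentence ``every infinite first-countable Hausdorff compact space contains a copy of $\mathbb{N}(\infty)$'' from $\mathbf{WoAm}$ and from $\mathbf{BPI}$; since $\mathbf{WoAm}$ is true in $\mathcal{N}1$ and $\mathbf{BPI}$ is true in $\mathcal{N}3$, the sentence holds in both models.

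Part (iii) is where the matching is least direct, and is the step I expect to require the most care. Here $\mathcal{N}2$ satisfies $\mathbf{MC}$ but none of the hypotheses named in Theorem~\ref{s4t2}(ii); the bridge is the chain $\mathbf{MC}\rightarrow\mathbf{CMC}\rightarrow\mathbf{IQDI}$, where the first implication is immediate (a denumerable family is a non-empty family) and the second is the implication $\mathbf{CMC}\rightarrow\mathbf{IQDI}$ recorded in the preamble. Thus $\mathbf{IQDI}$ holds in $\mathcal{N}2$, and Corollary~\ref{s4c03}(i) (itself a consequence of Theorem~\ref{s4t2}(i), since a copy of $\mathbf{D}(\infty)$ for an infinite discrete cuf space $\mathbf{D}$ is, by Proposition~\ref{discommet}(i), an infinite metrizable compact scattered space) gives that the sentence of (iii) is true in $\mathcal{N}2$. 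The one genuinely non-bookkeeping ingredient is the $\mathbf{CMC}\rightarrow\mathbf{IQDI}$ link, which I would verify carefully.

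Finally, for (iv) I would write $\mathbf{\Phi}$ for the sentence ``every infinite first-countable Hausdorff compact space contains an infinite metrizable compact scattered subspace'' and note that $\mathbf{\Phi}$ holds in $\mathcal{N}1$ and $\mathcal{N}3$ (because a copy of $\mathbb{N}(\infty)$ is such a subspace, so the sentence of (ii) implies $\mathbf{\Phi}$) and in $\mathcal{N}2$ (by (iii)). The three non-implications are then witnessed by choosing, in each case, the model where $\mathbf{\Phi}$ is true while the target form fails: in $\mathcal{N}2$ the form $\mathbf{CAC}_{fin}$ is false, giving $\mathbf{\Phi}\nrightarrow\mathbf{CAC}_{fin}$; in $\mathcal{N}1$ (equally $\mathcal{N}3$) the form $\mathbf{IWDI}$ is false, hence the stronger $\mathbf{IQDI}$ is false and, since $\mathbf{CMC}\rightarrow\mathbf{IQDI}$, $\mathbf{CMC}$ is false too, giving $\mathbf{\Phi}\nrightarrow\mathbf{IQDI}$ and $\mathbf{\Phi}\nrightarrow\mathbf{CMC}$. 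The only subtlety here is keeping the model assignments consistent, so that $\mathbf{\Phi}$ is genuinely true in each chosen model via (ii) and (iii).
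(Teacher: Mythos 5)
Your proposal is correct and follows essentially the same route as the paper: the paper's proof is precisely the preamble's catalogue of facts about $\mathcal{N}1$, $\mathcal{N}2$, $\mathcal{N}3$ (including the chain $\mathbf{MC}\rightarrow\mathbf{CMC}\rightarrow\mathbf{IQDI}$ and the falsity of $\mathbf{IWDI}$, hence of $\mathbf{IQDI}$ and $\mathbf{CMC}$, in $\mathcal{N}1$ and $\mathcal{N}3$) combined with Theorem \ref{s4t2}, its proof (Proposition 4.4 of \cite{ktw0} for part (i)), and Corollary \ref{s4c03}. Your model assignments and the resulting non-implications in (iv) match the paper's argument exactly.
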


\begin{remark}
\label{s4r05}
In \cite{ktw0}, it was shown that $\mathbf{M}(C, S)\wedge\neg\mathbf{IDI}$ has a $\mathbf{ZF}$-model. Using similar arguments, one can show that $\mathbf{M}(C, S)\wedge\neg\mathbf{IQDI}$ also has a $\mathbf{ZF}$-model.  This, together with Corollary \ref{s4c03}, proves that  the sentence ``every infinite compact metrizable space contains an infinite compact scattered subspace'' does not imply  $\mathbf{IQDI}$ in $\mathbf{ZF}$. This can be also deduced from Theorem \ref{s4t04}.

(ii) Suppose that $\{A_n: n\in\omega\}$ is a disjoint family of non-empty finite sets which does not have any partial choice function. Let $D=\bigcup\limits_{n\in\omega}A_n$ and $\mathbf{D}=\langle D, \mathcal{P}(D)\rangle$. Then $\mathbf{D}(\infty)$ is a compact metrizable scattered space which does not contain a copy of $\mathbb{N}(\infty)$.
\end{remark}

Let us finish our results with the following proposition:

\begin{proposition}
\label{s4p06}
$(\mathbf{ZF})$
A metrizable space  $\mathbf{X}$ contains an infinite compact scattered subspace if and only if $\mathbf{X}$ contains a copy of $\mathbf{D}(\infty)$ for some infinite discrete cuf space $\mathbf{D}$. 
\end{proposition}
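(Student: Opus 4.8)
The plan is to prove the two implications separately, with the reverse implication being immediate and the forward one carrying all the content. For the easy (``if'') direction I would first observe that, for any infinite discrete cuf space $\mathbf{D}$, the space $\mathbf{D}(\infty)$ is itself an infinite compact scattered space: it is compact by the very definition of the Alexandroff compactification (any open set containing $\infty$ omits only a finite set), it is infinite because $D$ is, and it is scattered because $\Iso(\mathbf{D}(\infty))=D$, after which only $\{\infty\}$ remains and $\infty$ is isolated in that singleton. Hence any copy of $\mathbf{D}(\infty)$ inside $\mathbf{X}$ is an infinite compact scattered subspace, giving one direction for free.

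For the forward (``only if'') direction, let $\mathbf{K}$ be an infinite compact scattered metrizable subspace of $\mathbf{X}$ and fix a metric $d$ inducing its topology. The first step is to locate a point of Cantor--Bendixson rank exactly $1$. Here I would note that $K^{(1)}\neq\emptyset$, since otherwise $\mathbf{K}=\Iso(K)$ would be discrete and compact, hence finite, contradicting infiniteness. Because $\mathbf{K}$ is scattered, its non-empty subspace $K^{(1)}$ has an isolated point, so I may fix $p\in\Iso(K^{(1)})$ together with an open $U\ni p$ in $\mathbf{K}$ satisfying $U\cap K^{(1)}=\{p\}$; consequently every point of $U\setminus\{p\}$ has rank $0$, i.e.\ $U\setminus\{p\}\subseteq\Iso(K)$. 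Using the metric I would then choose $\varepsilon>0$ with the \emph{closed} ball $V=\{x\in K:d(p,x)\leq\varepsilon\}\subseteq U$; this $V$ is closed in $\mathbf{K}$, hence compact, and still satisfies $V\setminus\{p\}\subseteq\Iso(K)$.

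The key claim is that $V$ is \emph{already} a copy of $\mathbf{D}(\infty)$ for the discrete space $\mathbf{D}=\langle V\setminus\{p\},\mathcal{P}(V\setminus\{p\})\rangle$. Each point of $V\setminus\{p\}$ is isolated in $\mathbf{K}$, hence in $V$, while $p$ is an accumulation point of $V$; thus $p$ is the unique non-isolated point of $V$ and $\Iso(V)=V\setminus\{p\}$. From this I would deduce that the open neighbourhoods of $p$ in $V$ are exactly the cofinite sets containing $p$: the complement of such a neighbourhood is a closed subset of compact $V$ consisting entirely of isolated points, hence a compact discrete (so finite) set, and conversely finite sets are closed. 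This is precisely the neighbourhood filter of $\infty$ in the Alexandroff compactification, so the identity map sending $p\mapsto\infty$ is a homeomorphism $V\to\mathbf{D}(\infty)$. Finally $V\setminus\{p\}$ is infinite (otherwise $p$ could not be an accumulation point) and, by Corollary \ref{s2c5} applied to the compact metric space $V$, the set $\Iso(V)=V\setminus\{p\}$ is a cuf set; hence $\mathbf{D}$ is an infinite discrete cuf space and $V$ is the desired copy of $\mathbf{D}(\infty)$ in $\mathbf{X}$.

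The step I expect to require the most care, and which pinpoints why the scattered hypothesis is indispensable, is the production of the rank-$1$ point $p$: in a perfect compact metrizable space such as the Cantor set one has $K^{(1)}=K$ with no isolated points, so no such $p$ exists and no copy of any $\mathbf{D}(\infty)$ appears. The genuinely $\mathbf{ZF}$-sensitive trap is that one must \emph{not} assemble the convergent family point by point, which would demand infinitely many choices; the device that avoids this is to seize the entire closed ball $V$ at once and to read off the cuf-ness of $V\setminus\{p\}$ from Corollary \ref{s2c5}, so that the only selections used are the single choices of $p$ and of $\varepsilon$, both legitimate in $\mathbf{ZF}$.
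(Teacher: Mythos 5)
Your proof is correct, and it follows a genuinely different route from the paper's. The paper argues globally: since the infinite compact scattered subspace $\mathbf{Y}$ is iso-dense, it is a metrizable compactification of the discrete space $\Iso(Y)$, so Proposition \ref{discommet}(ii) yields that $\Iso(Y)$ is an infinite cuf set, and then Theorem \ref{s4t2}(iii) (whose proof rests on the inductive constructions in parts (i) and (ii) of that theorem) is invoked to extract a copy of $\mathbf{D}(\infty)$. You instead argue locally: you pin down a point $p$ of Cantor--Bendixson rank exactly $1$, take a small closed ball $V$ around it so that $V\setminus\{p\}\subseteq\Iso(K)$, and verify directly that $V$ \emph{is} the desired copy of $\mathbf{D}(\infty)$, with cuf-ness of $V\setminus\{p\}=\Iso(V)$ coming from Corollary \ref{s2c5} (which applies since a compact metric space is limit point compact, and also totally bounded, in $\mathbf{ZF}$). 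What each approach buys: the paper's proof is shorter because it reuses heavy machinery already established, whereas yours is self-contained, avoids the recursive construction hidden inside Theorem \ref{s4t2}, exploits scatteredness more directly through the rank-$1$ point, and gives the sharper conclusion that the copy of $\mathbf{D}(\infty)$ can be taken to be a closed ball, hence a neighborhood of a point of $\mathbf{K}$. All the steps you flag as delicate (the cofinite-neighbourhood characterization via ``compact discrete implies finite'', and the avoidance of infinitely many choices by seizing $V$ at once) are indeed sound in $\mathbf{ZF}$.
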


\begin{proof}
$(\rightarrow)$ Suppose that $\mathbf{X}$ is a metrizable space which has an infinite compact scattered subspace $\mathbf{Y}$. Then $\Iso(Y)$ is a dense discrete subspace of $\mathbf{Y}$, so $\mathbf{Y}$ is a metrizable compactification of the discrete subspace $\Iso(Y)$ of $\mathbf{Y}$. We deduce from Proposition \ref{discommet}(ii) that $\Iso(Y)$ is an infinite cuf set. Thus, by Theorem \ref{s4t2}(iii), $\mathbf{Y}$ contains a copy of $\mathbf{D}(\infty)$ for some infinite discrete cuf space $\mathbf{D}$.

$(\leftarrow)$ This is trivial because the one point Hausdorff compactification of an infinite discrete space is a scattered space.
\end{proof}

\section{A shortlist of new open problems}
\label{s6}
\begin{problem}
Find, if possible, a $\mathbf{ZF}$-model for $\mathbf{NAS}\wedge\neg\mathbf{INSHC}$.
\end{problem}

\begin{problem} Find, if possible, a $\mathbf{ZF}$-model for $\mathbf{INSHC}\wedge\neg\mathbf{BPI}\wedge\neg\mathbf{IDFBI}$.
\end{problem}

\begin{problem}
Make a list of the forms from \cite{hr} that are true in the model $\mathcal{N}$ constructed in the proof to Theorem \ref{thm:3}.
\end{problem}

\begin{problem}
Is it provable in $\mathbf{ZF}$ that every non-empty dense-in-itself compact metrizable space contains an infinite compact scattered subspace? 
\end{problem}

\begin{problem} Is it provable in $\mathbf{ZF}$ that if $\mathbf{X}$ is a compact Hausdorff non-scattered weakly Loeb space which has a cuf base, then $|\mathbb{R}|\leq|X|$?
\end{problem}

\section*{Declarations}
The authors declare no conflict of interest and no specific financial support for this work.

\end{document}